\documentclass[a4paper,reqno]{amsart}

\newtheorem{theorem}{Theorem}[section]

\newtheorem{lemma}[theorem]{Lemma}

\newtheorem{corollary}[theorem]{Corollary}

\def\beq{\begin{equation}}
\def\eeq{\end{equation}}
\def\be{\begin{equation*}}
\def\ee{\end{equation*}}

\usepackage{amssymb}
\usepackage{amsfonts}

\title{On limits of betweenness relations}
\author[D. Bradley-Williams and J. K. Truss]{David Bradley-Williams and John K. Truss}


\begin{document}

\maketitle

\setcounter{footnote}{1}\footnotetext{2010 Mathematics Subject Classification: 20B27, 06F15; \\
This paper is based on part of the first author's PhD thesis at the University of Leeds, 2014, which received DTG funding from the EPSRC. He would
particularly like to thank Dugald Macpherson for introducing him to the topic of this paper, and for
many helpful discussions during his time as a PhD student. He is partially supported
by the research training group GRK 2240: \emph{Algebro-Geometric Methods in Algebra, Arithmetic and Topology,}
funded by the DFG. The authors are grateful to the anonymous referee for suggestions that improved the presentation of this paper.\\
keywords: Jordan groups, betweenness relation, limit of betweenness relations, tree of $B$-sets, semilinear order, infinite primitive permutation groups.}
\newcounter{number}

\begin{abstract} We give a flexible method for constructing a wide variety of limits of betweenness relations. This 
unifies work of Adeleke, who constructed a Jordan group preserving a limit of betweenness relations, and Bhattacharjee 
and Macpherson who gave an alternative method using a Fra{\"i}ss\'e-type construction. A key ingredient in their 
work is the notion of a tree of B-sets. We employ this, and extend its use to a wider class of examples.
\end{abstract}

\section {Introduction}

A permutation group $G$ acting on a set $\Omega$ is said to be {\em $k$-transitive} (or to act {\em $k$-transitively}) 
if for any two $k$-tuples of distinct elements of $\Omega$ there is an element of $G$ taking the first to the second, 
and it is {\em highly transitive} if it is $k$-transitive for all positive integers $k$. If for any two $k$-element 
sets there is a group element taking the first to the second, then $G$ is said to be {\em $k$-homogeneous}, and it is
{\em highly homogeneous} if it is $k$-homogeneous for all $k$. If $G$ acts transitively on 
$\Omega$, then $G$ is said to be a {\em Jordan group} if there is a proper subset $\Gamma$ of $\Omega$ with 
$|\Gamma| > 1$ satisfying the following: the pointwise stabilizer $G_{(\Omega \setminus \Gamma)}$ in $G$ of $\Omega \setminus \Gamma$ acts transitively on $\Gamma$, and for all $k \ge 0$ such that $G$ acts $(k+1)$-transitively on $\Omega$, we have $|\Omega \setminus \Gamma| > k$. Such $\Gamma$ is called a {\em Jordan set} for $G$. 
(The condition that $|\Omega \setminus \Gamma| > k$ is required for non-triviality, since otherwise, by 
$(k+1)$-transitivity, any $\Gamma$ satisfying $|\Omega \setminus \Gamma| \le k$ would have to count as a Jordan set.) 
We recall that a permutation group is said to be {\em primitive} if it preserves no non-trivial equivalence relation. 
If $G$ is a primitive Jordan group acting on $\Omega$, and $\Gamma$ is a Jordan set such that 
$G_{(\Omega \setminus \Gamma)}$ acts primitively on $\Gamma$, then we say that $\Gamma$ is a {\em primitive} Jordan set 
for $G$.

This paper makes a contribution to the study of infinite primitive Jordan groups, which was initiated by S. A. Adeleke and 
P. M. Neumann \cite{Adeleke3}, and continued by S. A. Adeleke and H. D. Macpherson \cite{Adeleke2}. In the latter paper, a list of possible 
structures which can be preserved by a primitive Jordan group was given, based on various hypotheses on transitivity, 
homogeneity and primitivity. According to \cite{Adeleke2} Theorem 1.0.1, an infinite Jordan group which is not highly 
transitive preserves one of the following structures on $\Omega$:

1. a dense linear order,

2. a dense circular order,

3. a dense linear betweenness relation,

4. a dense separation relation,

5. a dense semilinear order,

6. a dense general betweenness relation,

7. a $C$-relation,

8. a $D$-relation,

9. a Steiner system,

10. a limit of betweenness relations,

11. a limit of $D$-relations,

12. a limit of Steiner systems.

We do not define all of these, but just the ones we need in this paper. In fact we focus on limits of betweenness relations, and to 
explain this, we shall also need to understand betweenness relations and semilinear orders (trees). Broader details regarding the theory, history, and applications concerning Jordan groups can be found in the introduction of \cite{Almazaydeh}.

In \cite{Adeleke2} the precise nature of the structures arising in the last three cases of this list was left rather incomplete, though 
this was elucidated in further work. In \cite{Adeleke1} examples were constructed of Jordan groups preserving limits of $B$-relations and 
limits of $D$-relations. Adeleke also constructed an example of a group preserving a limit of Steiner systems 
\cite{Adeleke0}, which was followed by further constructions of towers of Steiner systems by K. Johnson \cite{Johnson}. In \cite{Bhattacharjee} an alternative method for constructing Jordan groups preserving a limit of $B$-relations was given, 
based on a `generic' construction. The authors use the important notion of a finite `tree of $B$-sets' to act as an approximation 
to a desired structure. They also used a new kind of ternary relation, termed an `$L$-set' or `$L$-relation', which is a first order 
structure somehow encoding a tree of $B$-sets. The key steps were verification of a suitable amalgamation property for trees of 
$B$-sets (equivalently for $L$-sets), and an appeal to a Fra\"iss\'e-type method to obtain a limiting structure. This was originally 
given as an $L$-structure, but the (now infinite) tree of $B$-sets was recoverable, and gave rise to the desired Jordan group. In related recent work \cite{Almazaydeh}, A. I. Almazaydeh and H. D. Macpherson have constructed a primitive Jordan group preserving a limit of $D$-relations as the automorphism group of a relational structure. For this the authors develop an analogous notion of `tree of $D$-sets'.

In \cite{Bradley}, an attempt was made to unify the approaches of Adeleke and Bhattacharjee--Macpherson by a suitable adaptation of the 
tree of $B$-sets technique. The idea was also that the method should apply much more widely, with the final (infinite) tree of $B$-sets
being constrained to have particular forms, not just of the generic kind in \cite{Bhattacharjee}. The main case presented there was
where the tree is a so-called ${\mathbb N}^+$-tree, being one in which the maximal chains are all isomorphic to the set of natural
numbers, going downwards. In order to capture this situation, the trees of $B$-sets were endowed with unary predicates, called
`depth' predicates, measuring how far from the top each node was. Many of the proposed steps were similar to \cite{Bhattacharjee}, 
though the verification of amalgamation differed somewhat.

Here we complete the picture as begun in \cite{Bradley}, and apply similar methods to a much wider class of trees. In fact we can
handle $2^{\aleph_0}$ pairwise non-isomorphic examples. The role of the depth predicates is now assumed by `colours' assigned to nodes, 
both in the finite trees of $B$-sets forming the approximations, and the final infinite limit of the construction. A key difference 
from the example given in \cite{Bhattacharjee} is that none of our limiting structures is $\aleph_0$-categorical, whereas that given in \cite{Bhattacharjee} {\em is}. 

One major difference in our presentation is that we approach our Fra\"iss\'e-style limit more directly, via the trees of $B$-sets, and 
rather than appealing to a version of Fra\"iss\'e's Theorem for first order structures as given in \cite{Evans}, we perform its inductive construction explicitly. 
This means that we can at least initially avoid the need for `reconstructing' the tree of $B$-sets from the $L$-relation, which is an 
important concern in \cite{Bhattacharjee} (reconstructing is however needed in the final section, to show that we have found
$2^{\aleph_0}$ non-isomorphic examples). It is however important that the $L$-structure is still present, and that is because by 
definition, a Jordan group is a {\em permutation} group, so must act on some set, and this will be the domain of the $L$-relation. In 
giving this construction, we just give the action in one direction; that is, we show how the automorphism group of the (limiting) 
tree of $B$-sets acts on the $L$-structure, but we do not show that all automorphisms of the $L$-structure arise in this way.

The paper is organized in the following sections.

In section 2, preliminaries are given concerning semilinear orders, $B$- and $C$-relations, and combinatorial trees, and the (rather
complicated) definition of `limit of betweenness relations'.

In the next section, we introduce the main tool, namely trees of $B$-sets. In our case, unlike in \cite{Bhattacharjee}, we fix our 
`ambient' tree in advance, which comes along with a colouring by unary predicates, first explaining precisely which trees we are
allowing. We give a number of basic results about finite trees of $B$-sets, and define their associated $L$-relations. These
definitions then extend to the infinite case.

In \cite{Bhattacharjee} and \cite{Bradley}, additional relational symbols were employed in order to help control amalgamation. We
replace the need for these by considering a notion of `strong substructure'. In the simplest case, for finite $B$-relations of
positive type, this means that the centroid of any three vertices lying in the smaller structure is the same whichever structure
is used to calculate it. There are corresponding notions of `strong substructure' and `strong embedding' for trees of $B$-sets.

In section 4, we concentrate on developing the amalgamation machinery, which is done entirely for strong embeddings. As in the 
earlier treatments, the main cases to be covered are 1-point extensions, and then this can be lifted to the general finite case.
Theorem \ref{4.6} is the main result of this section, which establishes the existence of a suitable limit tree of $B$-sets over the
given ambient tree, in the style of Fra\"iss\'e.

The next section presents a detailed analysis of the structure just constructed. Techniques are required to enable us to handle the
limit structure, and the section moves towards the verification that we have indeed constructed a primitive Jordan group, which 
preserves a limit of betweenness relations. To make the enterprise worthwhile, it is of course important that the group really 
{\em is} `irregular', meaning that it doesn't preserve any of the other familiar relations.

Since one our main motivations was to extend the results and methods of both \cite{Bhattacharjee} and \cite{Adeleke1}, we devote
the first part of section 6 to showing how our structure is exactly the same as Adeleke's in the case of ${\mathbb N}^+$-trees 
(as in \cite{Bradley}). Finally we show that we really do have $2^{\aleph_0}$ pairwise non-isomorphic examples. For this we 
have to recover the original tree from the permutation group. The method for this is as described in \cite{Bhattacharjee}, except
that we have to start from knowing {\em only} the permutation group, and so the first step is to show that the $L$-relation can 
be recognized in the permutation group, before invoking the machinery for reconstructing the tree.

\section {Preliminaries}

We begin by defining the two main kinds of relation that we shall need.

A {\em semilinear order} is a partially ordered set $(T, \le)$ such that for each $x \in T$, $\le$ linearly orders 
$\{y \in T: y \le x\}$, and such that $\forall x \forall y \exists z(z \le x \wedge z \le y)$ (without the second condition, it would 
be a `forest'). This is sometimes also called a `lower' semilinear order, since it is linear going downwards, or a `tree' (though this 
has many meanings, so we shall try to be unambiguous in our usage). In a general semilinear order, some or all meets may exist. 
Precisely, if for $x, y \in T$, there is a greatest lower bound, then this is unique and is called the {\em meet} of $x$ and $y$, 
and may be denoted by $x \curlywedge y$; we reserve $\wedge$ for conjuction in formulas. We have given the definition in terms of the reflexive ordering, but we may also use the 
corresponding strict relation given by $x < y$ if $x \le y \wedge x \neq y$. There is a least semilinear order containing a given 
one in which all meets exist, obtained by adjoining new points corresponding to pairs in $T$ which previously had no meet, and this 
is written $T^+$. A point of $T^+$ is said to be a {\em ramification point} if it is the meet of two incomparable members of $T$.

If $t \in T^+$, then we may define {\em cones} at $t$ to be equivalence classes of points strictly above $t$ under the relation 
$x \sim y$ if for some $z > t$, $z \le x \wedge z \le y$ (one verifies easily that the fact that this is an equivalence relation follows 
from semilinearity---in a general partial order it may not be one). The {\em ramification order} of $t$ is defined to be the number of 
cones at $t$. Note that this is greater than 1 if and only if $t$ is a ramification point, and it is 0 if and only if $t$ is a leaf 
(i.e. maximal). In \cite{Droste}, a class of sufficiently transitive semilinear orders is constructed and classified. These are countable, 
and 3-set-transitive, meaning that for any two isomorphic 3-element substructures there is an automorphism taking the first to the second 
(though not necessarily extending the given map). This entails that the ramification points either all lie in $T$, or none do (i.e. they 
all lie in $T^+ \setminus T$) and that all ramification orders of ramification points are equal. There are countably many such 
structures up to isomorphism, all having maximal chains isomorphic to $\mathbb Q$, determined by whether or not the ramification points 
are in $T$ (`positive type') or not (`negative type'), and what the ramification order is. In this paper we shall just be concerned with 
the case of positive type with ramification order $\aleph_0$, which is the most typical, or `generic' case, and which we denote by $K$
throughout.

A {\em $B$-relation} is a ternary relation on a non-empty set $X$, which we write with a semicolon after the first variable, thought 
of as saying that the first point lies between the other two, satisfying the following three properties:

(B1) $B(x ;y, z) \to B(x; z, y)$,

(B2) $B(x; y, z) \wedge B(y; x, z) \leftrightarrow x = y$,

(B3) $B(x; y, z) \to B(x; y, w) \vee B(x; w, z)$.

We also call $(X, B)$ a {\em $B$-set}. It is {\em nontrivial} if $|X| \ge 2$. It is called a {\em betweenness relation} if in addition

(B4) $\neg B(x; y, z) \to (\exists w \neq x)(B(w; x, y) \wedge B(w; x, z))$.

(B5) $B$ is said to be of {\em positive type} if, for any $x$, $y$, $z$ for which the $B$-relation does not hold in any order, there 
is $u$ such that $B(u; x, y) \wedge B(u; x, z) \wedge B(u; y, z)$.

We say that a set $\{x, y, z\}$ is {\em incomparable} if the $B$-relation does not hold for them in any order, and an element $u$ as in
the conclusion of (B5) is called a {\em centroid} of $x$, $y$, and $z$. Thus (B5) says that any incomparable triple has a centroid.
Note that if a centroid exists, it is unique, as follows from the $B$-set axioms. It can also be checked that (B4) follows from (B5),
and in the finite case, (B4) and (B5) are equivalent (see \cite{Adeleke4} Lemma 29.1).

The relationship between $B$- and betweenness relations and `cycle-free' partial orders is explored in \cite{Truss}. If a $B$-set 
$(X, B)$ in addition satisfies $\forall x \forall y \forall z B(x; y, z) \vee B(y; z, x) \vee B(z; x, y)$ then is is called a 
{\em linear betweenness relation} (for any three points, one of them lies between the other two), and one can check that any linear 
$B$-relation arises from some linear order $\le$ on $X$ by saying that $B(x; y, z)$ if $y \le x \le z \vee z \le x \le y$. 

A {\em combinatorial tree} is a simple connected graph without circuits. Any combinatorial tree gives rise to a betweenness relation 
in an obvious way, derived from the notion of path. Thus $B(x; y, z)$ if the unique path from $y$ to $z$ passes through $x$. Conversely, 
any finite $B$-set of positive type on at least three points can be viewed as a combinatorial tree by saying that two points are adjacent 
if there is no vertex of their complement between them. See \cite{Adeleke4} page 103, where this is explained. To make things 
consistent, we also view a $B$-set on just two vertices as corresponding to a connected graph on two vertices (that is, just one 
edge). If there is just one vertex, then both the $B$-set and the corresponding graph are empty relations. From thinking of it as 
a graph, we may talk about the {\em valency} (or {\em degree}) of an element of a finite $B$-set, a {\em leaf} as a node of valency 
1, and a node of valency 2 as {\em dyadic}. 

Some of the definitions for semilinear orders carry over to betweenness relations, for instance that of `cone', here referred to as 
`branch'. If $a \in X$, there is an equivalence relation on $X \setminus \{a\}$ given by $y$ and $z$ are related if $\neg B(a; y, z)$ and 
the equivalence classes are called {\em branches} at $a$. If there are at least 3 distinct branches at $a$, then $a$ is called a 
{\em ramification point}. Note that if $X$ is finite, then the number of branches at $a$ is equal to its valency in the corresponding 
combinatorial tree. 

A {\em $C$-relation} is a ternary relation $C$ on a set $X$ satisfying the following properties:

(C1) $C(x; y, z) \to C(x; z, y)$,

(C2) $C(x; y, z) \to \neg C(y; x, z)$,

(C3) $C(x; y, z) \to C(x; w, z) \vee C(w; y, z)$,

(C4) $x \neq y \to C(x; y, y)$.

We also call $(X, C)$ a {\em $C$-set}. 

We have given the definitions of $B$- and $C$-sets as they both feature in the definition of `limit of betweenness relations'.

The definition given in \cite{Adeleke2} of a limit of betweenness relations defines what it means for a group to preserve such a 
limit, rather than saying what the limit actually is.

If $(G, \Omega)$ is a permutation group, following \cite{Adeleke2} Definition 2.1.9 and reproduced in \cite{Bhattacharjee} Definition 2.4, we say that $G$ {\em preserves a limit of betweenness relations} if it is an infinite Jordan group 
such that there are a linearly ordered set $(J, \le)$ with no least element, a chain $(\Gamma_j: j \in J)$ of subsets of $\Omega$, and 
a chain $(H_j: j \in J)$ of subgroups of $G$ such that $i < j \to \Gamma_i \supset \Gamma_j \wedge H_i \supset H_j$, and

(i) for each $j$, $H_j = G_{(\Omega \setminus \Gamma_j)}$, $H_j$ is transitive on $\Gamma_j$, and has a unique maximal congruence $\rho_j$ 
on $\Gamma_j$,

(ii) for each $j$, the group naturally induced by $H_j$ on $\Gamma_j/\rho_j$ is a 2-transitive but not 3-transitive Jordan group 
preserving a betweenness relation,

(iii) $\bigcup_{j \in J}\Gamma_j = \Omega$,

(iv) $(\bigcup_{j \in J}H_j, \Omega)$ is a 2-primitive but not 3-transitive Jordan group,

(v) if $i \ge j$ then $\rho_i \supseteq \rho_j \hspace{-.05in} \upharpoonright_{\Gamma_i}$,

(vi) $\bigcap_{j \in J}\rho_j$ is equality on $\Omega$, (where in order for the intersection to make sense we 
view $\Omega \setminus \Gamma_j$ as a single $\rho_j$-class),

(vii) $(\forall g \in G)(\exists i_0 \in J)(\forall i < i_0)(\exists j \in J)(g(\Gamma_i) = \Gamma_j \wedge gH_ig^{-1} = H_j)$,

(viii) for any $\alpha \in \Omega$, $G_\alpha$ preserves a $C$-relation on $\Omega \setminus \{\alpha\}$.

There are similar definitions of `limit of $D$-relations', and of `limit of Steiner systems', which we do not need.

\section{Trees of $B$-sets and $L$-relations}

A method was devised in \cite{Bhattacharjee} for building a limit of $B$-relations in a `generic' 
fashion, based on applying Fra{\"i}ss\'e's method to a class of finite structures, called `trees of 
$B$-sets'. The idea was that these would approximate the situation described in the definition of 
`limit of betweenness relations'. Since the method was fully generic, the tree was also allowed to expand, 
until in the limit it became an infinite dense semilinear order of positive type with all nodes having 
infinite ramification order, which is isomorphic to $K$, mentioned in the introduction. In addition, 
an associated first order structure $M$ was given, called an `$L$-structure', which corresponds 
precisely to the tree of $B$-sets, and such that the automorphism group of the Fra\"iss\'e limit of 
the tree of $B$-sets acts on $M$, and this action is the desired Jordan group. The construction of 
\cite{Adeleke1} which in some ways is superficially similar, is rather of the {\em group} than 
directly of a relation or relations preserved. Furthermore, it is constructed using a very different, 
and discrete semilinear order. One of our goals is to try to unify these two scenarios, and indeed, 
to render the Bhattacharjee-Macpherson approach considerably more general, with the object of finding 
a wider class of examples of limits of betweenness relations, and consequently, of Jordan groups, not 
hitherto constructed. In this section we explain the two main notions used, of trees of $B$-sets and 
$L$-structures, and elucidate the connection between them. The amalgamation arguments which allow us 
to deduce the existence of the `generic' structure, are given in the next section.

Let $T$ be an `ambient' meet-closed lower semilinear order, with meet given by $\curlywedge$ 
(generalizing the infinite semilinear orders arising in \cite{Bhattacharjee} and \cite{Bradley}). We 
usually refer to $T$ as a `tree' (in the partially ordered sense), and we also call $T$  the {\em 
structure tree} in what follows. Although there are many possibilities for what $T$ can be, they 
are nevertheless carefully circumscribed (in order to achieve the Jordan property in the structure 
it is used to help build). We shall insist that $T$ is countable with no least element, though we 
do allow it to have maximal elements, and it must ramify infinitely at each non-maximal node. In 
addition, its automorphism group must be `rich'. This is made more precise as follows.

Let $C$ be a fixed chain with no least element. We say that a tree $T$ is {\em $C$-coloured} if there 
is a `colouring function' $F: T \to C$ which is compatible with the ordering, meaning that 
$s < t \Rightarrow F(s) < F(t)$. A subset of $T$ which is a chain, and such that all members of $C$ 
arise as colours, is called a {\em $C$-chain}. Note that every $C$-chain is a maximal chain (though not 
every maximal chain need be a $C$-chain).  We remark that the following construction also works for 
finite ramification orders $\ge 2$, but in this paper we only require the infinitely branching case.

\begin{lemma} \label{3.1} For any countable chain $C$ with no minimal element, there is a countable 
$C$-coloured lower semilinearly ordered meet semilattice $T = T_C$ which is unique up to isomorphism 
subject to having the following  properties:  every non-maximal point of $T$ ramifies with ramification 
order $\aleph_0$, and every point of $T$ lies in a $C$-chain.  \end{lemma}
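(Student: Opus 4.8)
The plan is to prove existence by an explicit recursive construction and uniqueness by a back-and-forth argument; the richness encoded in the two stated properties (ramification order $\aleph_0$ at every non-maximal node, and every node lying in a $C$-chain) is exactly what powers both halves. Throughout I use that along any $C$-chain $\Lambda$ the colouring $F$ restricts to an order-isomorphism $\Lambda \to C$, since $F \upharpoonright \Lambda$ is strictly increasing and surjective onto $C$.

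For existence I would build $T$ as the union of an increasing chain $T_0 \subseteq T_1 \subseteq \cdots$ of countable $C$-coloured lower semilinear meet-semilattices. Start with $T_0$ a single chain order-isomorphic to $C$, coloured by that isomorphism, so that $T_0$ is already a $C$-chain. Enumerate countably many requirements of two kinds: (i) for each node $t$ produced so far, attach $\aleph_0$ new cones at $t$; and (ii) for each node $t$, place $t$ in a full $C$-chain. A new cone at a node $t$ is created by taking a fresh copy of the final segment $\{c \in C : c > F(t)\}$, colouring it by $C$, and gluing its bottom above $t$. Since we only ever attach chains \emph{above} an existing node, the meet of any two newly separated branches is the already-present node at which they split; hence each $T_n$ remains meet-closed of positive type and the colouring stays strictly monotone. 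Dovetailing so that every requirement is eventually discharged, $T = \bigcup_n T_n$ is countable, inherits having no least element from $C$, ramifies with order $\aleph_0$ at every non-maximal node by (i), and has every node in a $C$-chain by (ii). (A Fra\"iss\'e-style limit of finite $C$-coloured trees would give an alternative route to existence.)

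For uniqueness I would run back-and-forth, driven by an \emph{Extension Lemma} proved first from the two properties: if $t$ is a non-maximal node of colour $c_0$, if $c \in C$ with $c > c_0$, and if $K_1, \dots, K_m$ are finitely many cones at $t$, then there is a node of colour $c$ above $t$ in a cone distinct from all $K_i$. The crucial observation is that any $C$-chain $\Lambda$ through a node $u > t$ must pass through $t$: the node of $\Lambda$ of colour $c_0$ is comparable with $t$ (both lie below $u$, and the set below $u$ is a chain) and has the same colour as $t$, hence equals $t$ by strict monotonicity of $F$; so $\Lambda$ realises every colour $c > c_0$ by a node above $t$, and infinite ramification supplies a fresh cone to house such a $\Lambda$. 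Now, given $T^1, T^2$ satisfying the hypotheses, I enumerate both and build an increasing chain of finite partial isomorphisms between meet-closed finite substructures, preserving $\le$, $\curlywedge$ and $F$. To extend by a new point $a \in T^1$ with current domain $A$, I analyse its position: the finite chain $\{a \curlywedge x : x \in A\}$ has top $p$, and for every $x \in A$ one checks $a \curlywedge x = p \curlywedge x$, so meet-closure of $A \cup \{a\}$ only forces $p$ (and $a$) into the domain. Then $a$ either opens a new cone at $p$, or lies on an existing branch above $p$, or sits strictly between two comparable images; in each case the data to be matched is a colour together with a cone type, and the Extension Lemma applied in $T^2$ above $\phi(p)$ (avoiding the finitely many already-used cones) produces an image $b$ of colour $F(a)$ in the correct relative position. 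Symmetry gives the back step, and the union of the partial maps is the desired isomorphism.

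The main obstacle I anticipate is maintaining meet-closure of \emph{positive type} when $C$ is dense (say $C \cong \mathbb{Q}$): a naive model of $T$ by functions on initial segments of $C$ is not meet-closed, since two branches agreeing strictly below a colour but differing at it would have their meet at a Dedekind cut of $C$ rather than at a genuine $C$-coloured node. The construction above sidesteps this by only ever ramifying at already-present nodes, so that all meets are genuine nodes carrying colours in $C$; and in the back-and-forth I must correspondingly verify that every finite configuration encountered is meet-closed and that each case for the new point $a$ reduces cleanly to the single Extension Lemma. The delicate bookkeeping occurs in the case where $p \notin A$ is a brand-new ramification point forced below $a$: one must first install the image of $p$ at the right branching position and colour $F(p)$, and only then place $b = \phi(a)$ above it, and it is this nested placement that requires the most care.
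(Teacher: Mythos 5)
Your proposal is correct in substance, and it differs from the paper's own argument in an interesting way. For existence you build $T_C$ recursively by repeatedly gluing fresh copies of final segments $\{c \in C : c > F(t)\}$ above existing nodes; the paper does essentially the same thing in closed form, defining $T_C$ outright as the set of sequences $(c_0,(c_1,n_1),\ldots,(c_k,n_k))$ with $c_0 < \cdots < c_k$ in $C$, so that each such sequence branches off at level $c_{k-1}$ into infinitely many copies of a half-chain. The real divergence is in uniqueness. The paper also runs back-and-forth, but its partial isomorphisms have domains and ranges that are \emph{finite unions of $C$-chains} (so infinite sets): to absorb a new point $s$, one locates the unique node $t$ where $(-\infty,s]$ branches off the current domain (the greatest meet of $s$ with same-coloured points of the domain), uses infinite ramification at $f(t)$ in $T_2$ to pick a fresh $C$-chain $Y$ through $f(t)$, and matches the whole tail of the $C$-chain through $s$ to the tail of $Y$ \emph{by colours}. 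This makes the position-matching automatic and eliminates your case analysis: no Extension Lemma is needed, no meet-closure bookkeeping, and no two-step placement of a new ramification point, because the branch point $t$ always already lies in the domain (domains being unions of maximal chains). Your approach, with finite meet-closed domains and one-point extensions, is the more standard Fra\"iss\'e-style bookkeeping; it buys a reusable Extension Lemma (whose proof, via the observation that every $C$-chain through a point above $t$ must pass through $t$, is exactly right) at the cost of the case analysis. Two small points of looseness in your write-up, both repairable: your middle case (``$a$ lies on an existing branch above $p$'') is in fact vacuous, since by maximality of $p$ among the meets $a \curlywedge x$ the cone of $a$ at $p$ can contain no point of $A$; and the claim that closing $A \cup \{a\}$ under meets only forces $p$ in needs the short verification that $a \curlywedge x = p \curlywedge x = \min(p, x \curlywedge x_0)$, which lies in $A \cup \{p\}$ because $p$ and $x \curlywedge x_0$ are comparable, both being below $x_0$.
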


\begin{proof} Existence is established using the method of \cite{Droste}. We regard $C$ itself as one 
of the maximal chains of $T$. To get the other elements, we take $T$ to consist of all the finite sequences 
of the form $\sigma =  (c_0, (c_1, n_1), \ldots, (c_k, n_k))$ where $k, n_i \in \omega$, and 
$c_0 < c_1 < \ldots < c_k$ lie in $C$. This is partially ordered by saying that 
$\sigma = (c_0, (c_1, n_1), \ldots, (c_k, n_k))$ is less than $\tau = (d_0, (d_1, m_1), \ldots, (d_l, m_l))$ 
if $\sigma \neq \tau$, $k \le l$, $n_i = m_i$ for $0 < i \le k$, $c_i = d_i$ for $i < k$, and 
$c_k \le d_k$. Thus for instance, the sequences of length 1 are essentially the same as the members of $C$ 
under its given ordering. At any fixed $c \in C$, the sequences of length 2 beginning with $c$ branch off 
from $c$ in infinitely many copies of the half-chain $(c, \infty)$, and so on. In general. the points 
below $(c_0, (c_1, n_1), \ldots, (c_k, n_k))$ form a chain in order-type $(-\infty, c_k)$, and the points
of the form $(c_0, (c_1, n_1), \ldots, (c_k, n_k), (d,n))$ fall into $\aleph_0$ chains as $n$ varies, each
of order-type $(c_k, \infty)$. The effect is that $T$ is the union of many chains all isomorphic to $C$. 

To establish uniqueness, let $T_1$ and $T_2$ be two such trees, and we construct an isomorphism from $T_1$ 
to $T_2$ by back-and-forth. Let $\mathcal F$ be the family of all (colour-preserving) isomorphisms $f$ 
from a subset of $T_1$ to a subset of $T_2$, such that ${\rm dom}(f)$ and ${\rm range}(f)$ are finite 
unions of $C$-chains. We show that any member $f$ of $\mathcal F$ can be extended to include any given 
member $s$ of $T_1$ in its domain (and similarly it can be extended to include any member of $T_2$ in its 
range). Since $T_1$ and $T_2$ are countable, we can enumerate their members and alternately extend in $T_1$ 
and $T_2$, so that after countably many steps, the union of all the maps is an isomorphism from $T_1$ to $T_2$. 

If $s \in {\rm dom}(f)$, no extension is necessary. Otherwise  choose points in the finitely many chains 
forming ${\rm dom}(f)$ having the same colour as $s$. Since $T_1$ is a meet semilattice, we can form 
the meets of each of these elements with $s$, which must all be below $s$, so are linearly ordered. 
Let the greatest of these be $t$. This is the ramification point at which $(-\infty, s]$ `branches off' from 
${\rm dom}(f)$. By assumption, $s$ lies in a $C$-chain $X$ of $T_1$. Also $t \in {\rm dom}(f)$ since it 
lies in one of the chains which make up ${\rm dom}(f)$, and $t$ and $f(t)$ have the same colour. As
$f(t)$ has infinite ramification order in $T_2$, there is a $C$-chain $Y$ of $T_2$ which intersects 
${\rm range}(f)$ in $(-\infty, f(t)]$. The two chains $X$ and $Y$ restricted to colours greater than $F(t)$
are isomorphic (the isomorphism being provided by colours) and this provides a unique extension of $f$  
from ${\rm dom}(f) \cup X$ to ${\rm range}(f) \cup Y$.           \end{proof}

The same back-and-forth idea can be used to show that ${\rm Aut}(T)$ acts transitively on the 
family of $C$-chains of $T$, explaining the sense in which we regard its automorphism group as `rich'.  
Note however that the $C$-chains we have explicitly included are not the only $C$-chains, as many others 
can be obtained by diagonalizing. These may also not be the only maximal chains of $T_C$.  For instance 
for the ambient tree of \cite{Bhattacharjee}, which may be obtained by this method  by taking $C = {\mathbb Q}$, 
there will be many maximal chains which peter out well below the top at irrational points. At 
any rate, the key point is that all points of $T_C$ thus constructed correspond, via their 
final entries, to points of $C$, which we may think of as their `level', and they are coloured accordingly 
(which is our real reason for using the letter `C'). We further note that although the ambient tree of 
\cite{Bhattacharjee} is as stated, its automorphism group is much bigger, as there there is no requirement 
to preserve colours. 

We augment the language of partial orders by means of unary predicates for the members of $C$. Our 
structures then may be regarded as `coloured trees', where each node of the tree given in the first paragraph 
of the proof of Lemma \ref{3.1} is coloured by the greatest member of $C$ which appears in one of its entries. 
In \cite{Bradley}, one particular tree termed the ${\mathbb N}^+$-tree was used, and the language used 
to describe this had `depth' predicates, to tell us how far from the top any particular point was. In 
that case, every point was at a finite distance from the top. In the general setting this will not be true, which
is the reason for our more complicated description of the colouring. 

The definition of `tree of $B$-sets' is quite complicated, so we begin with the finite case. A {\em finite 
tree of $B$-sets} $A$ consists of a finite meet-closed (coloured) subtree $(T^A, \curlywedge, <)$ of $T_C$, 
with root $r$ and with all non-leaf vertices ramifying, and for each $t \in T^A$ a finite $B$-relation $B_t$ 
of positive type having domain $B(t)$, together with maps $f_t$ and $g_{st}$ for $s < t$ in $T^A$ such that 
$t$ is a successor of $s$, fulfilling various conditions spelt out below. We say that $T^A$ is the subtree 
of $T_C$ which is {\em populated} by $A$.

In the preamble, there were two related notions, of {\em cones} of points in a semilinear order, and 
of {\em branches} of a $B$-relation. Both of these feature in our definition, which is why they need to 
be distinguished. As far as cones are concerned, since $T^A$ is finite, the cones at $t$ are essentially 
the same as its successors; more precisely, the set of successors $succ(t)$ of $t$ in $T^A$ forms a 
natural family of representatives of the cones at $t$ (their minimal elements). For $t$ such that $B_t$ is 
not a linear $B$-relation we also postulate the existence of a surjection $f_t$ from $\{s: t < s\}$ to the 
set of ramification points of $B(t)$, such that $f_t(s_1) = f_t(s_2)$ if and only if $s_1$ and $s_2$ lie in 
the same cone (which is equivalent to saying that they are above the same successor of $t$). This clearly 
entails that the number of ramification points of the $B$-set $B(t)$ is equal to the ramification order 
of $t$ in $T^A$, and since we required that all non-leaf vertices of $T^A$ ramify, $t$ is a leaf if and 
only if $B_t$ has no ramification points, which is equivalent to saying that it is a linear betweenness 
relation. We may write $f^{-1}_t(x)$ for the member of $succ(t)$ mapped by $f_t$ to $x$. In addition, 
the relations between the $B$-sets at comparable vertices of $T^A$ will be controlled by a family of 
functions $\{g_{st}\}_{t \in succ(s)}$. If $t$ is a successor of $s$, so that $f_s(t)$ is a 
ramification point of $B(s)$, then $g_{st}$ is a surjection from $B(s) \setminus \{f_s(t)\}$ to $B(t)$ 
which is constant on each branch of $B_s$ at $f_s(t)$, and which induces a bijection from the set of 
branches of $B_s$ at $f_s(t)$ to the vertices of $B(t)$. This in particular requires that $|B(t)|$ 
equals the number of branches at $f_s(t)$. 

In summary, a {\em finite coloured tree of $B$-sets} $A$ is a finite meet-closed subtree $T^A$ of $T_C$, with 
colours inherited from $T_C$, and with the $B$-sets and functions $f_t$ and $g_{st}$ fulfilling the stated conditions. 

We may extend the definition of $g_{st}$ to the case of general $s \le t$. Let $g_{s s}$ be the identity on 
$B(s)$. If $s < t$, choose a (necessarily unique) maximal chain $s = s_0 < s_1 < \ldots < s_n = t$, and let 
$g_{st}$ be the composite map $g_{s_{n-1} s_n}g_{s_{n-2} s_{n-1}} \ldots g_{s_0 s_1}$, whose domain is contained in $B(s)$. 

\begin{lemma} \label{3.2} If $A$ is a finite tree of $B$-sets, and $s < t < u$ in $T^A$, then 
$g_{su} = g_{tu} g_{st}$, and each $g^{-1}_{su}(a)$ is a union of branches around $f_s(u) \in B(s)$. \end{lemma}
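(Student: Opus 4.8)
The plan is to handle the two assertions separately: the identity $g_{su}=g_{tu}g_{st}$ is essentially a matter of unravelling the definition of the composite maps, and the statement about fibres is then an induction on chain length that rests on it. For the identity, I would first note that since $T^A$ is a lower semilinear order, the set $\{v: v\le u\}$ is linearly ordered, so the saturated chain from $s$ to $u$ in $T^A$ is unique (this is exactly the uniqueness presupposed in the definition of $g_{su}$), and both $s$ and $t$ lie on it. Writing that chain as $s=s_0<s_1<\cdots<s_j=t<\cdots<s_m=u$, its two segments are the unique chains from $s$ to $t$ and from $t$ to $u$. By definition of the composites, $g_{st}=g_{s_{j-1}s_j}\cdots g_{s_0s_1}$, $g_{tu}=g_{s_{m-1}s_m}\cdots g_{s_js_{j+1}}$ and $g_{su}=g_{s_{m-1}s_m}\cdots g_{s_0s_1}$, so the claim follows immediately from associativity of composition. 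The only point needing care is that the common domain shrinks as one composes, but this is harmless, since associativity holds on whatever domain is available.

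For the second assertion I would induct on the length $m$ of the chain from $s$ to $u$. In the base case $m=1$, the point $u$ is a successor of $s$, and $g_{su}$ is the basic surjection $B(s)\setminus\{f_s(u)\}\to B(u)$, which is constant on each branch of $B_s$ at $f_s(u)$ and induces a bijection from those branches onto the vertices of $B(u)$; hence each fibre $g_{su}^{-1}(a)$ is exactly one branch around $f_s(u)$, and a single branch is a fortiori a union of branches. For the inductive step, let $t=s_1$ be the immediate successor of $s$ on the chain. By the first part $g_{su}=g_{tu}g_{st}$, so $g_{su}^{-1}(a)=g_{st}^{-1}\bigl(g_{tu}^{-1}(a)\bigr)$. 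The inductive hypothesis applied to $t<u$ gives that $g_{tu}^{-1}(a)$ is a union of branches around $f_t(u)$, in particular a subset of $B(t)$; and since each fibre of the basic map $g_{st}$ is a single branch of $B_s$ around $f_s(t)$, the $g_{st}$-preimage of any subset of $B(t)$ is a union of branches of $B_s$ around $f_s(t)$.

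The step I regard as the genuine (if small) crux is the identification $f_s(t)=f_s(u)$, which is what makes the conclusion come out in terms of $f_s(u)$ rather than $f_s(t)$. Here I would appeal to the defining property of $f_s$: that $f_s(v_1)=f_s(v_2)$ precisely when $v_1$ and $v_2$ lie in a common cone at $s$, equivalently above a common successor of $s$. Since $t$ is a successor of $s$ with $s<t<u$, the witness $z=t$ shows that $t$ and $u$ lie in the same cone at $s$, whence $f_s(t)=f_s(u)$ and the branches around $f_s(t)$ and around $f_s(u)$ coincide. Substituting this into the inductive step completes the argument. Apart from this identification everything is bookkeeping: associativity of composition, surjectivity of the basic maps onto $B(t)$, and the fact that their fibres are exactly single branches.

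I do not expect any serious obstacle beyond keeping the domains and the two notions of \emph{cone} (in $T^A$) and \emph{branch} (in the $B$-sets $B_t$) carefully separated, since both appear in the hypotheses and it is easy to conflate them. The whole lemma is a consistency check on the definitions, and its real content is the compatibility of the maps $f_s$ with passage along a chain, captured by the equality $f_s(t)=f_s(u)$.
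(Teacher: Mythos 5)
Your proposal is correct, and for the composition identity it is exactly the paper's argument: both sides are the composite of the basic maps along the unique saturated chain from $s$ to $u$, which passes through $t$ by semilinearity. The paper's one-line proof in fact stops there and leaves the fibre statement unproved; your induction on chain length, with the key identification $f_s(t)=f_s(u)$ (both $t$ and $u$ lie in the cone at $s$ witnessed by $z=t$) and the observation that each fibre of a basic map $g_{st}$ is a single branch at $f_s(t)$, is a valid way to supply that missing half, so your write-up is if anything more complete than the paper's.
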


\begin{proof} Write the unique maximal chains from $s$ to $t$ as $s_0 < s_1 < \ldots < s_m$ and from $t$ to $u$ 
as $s_m < s_{m+1} < \ldots < s_n$. Then both $g_{su}$ and $g_{tu} g_{st}$ are equal to the composition 
$(g_{s_{n-1} s_n}g_{s_{n-2} s_{n-1}} \ldots g_{s_m s_{m+1}})(g_{s_{m-1} s_m}g_{s_{m-2} s_{m-1}} \ldots g_{s_0 s_1})$.  
\end{proof}

We observe that we are following the presentation of \cite{Bhattacharjee} rather than \cite{Bradley}, in order to 
facilitate the proof of amalgamation. Thus the map $f_t$ has range consisting of just the set of ramification 
points of $B(t)$, and a leaf of the tree is labelled by any (finite) path (linear betweenness relation). Note
however that our maps $g_{rt}$ are the inverses of the ones used in \cite{Bhattacharjee}.

Now the limit structure will be an infinite tree of $B$-sets, and the Jordan group will be its 
automorphism group. In order to verify the Jordan property, we have to say what set it acts on as a 
group of permutations, and there will be a suitable set, together with a ternary relation $L$ on that 
set fulfilling this role.  In \cite{Bradley} and \cite{Bhattacharjee} the relationship between a 
finite tree of $B$-sets and an $L$-structure was established. The amalgamation argument was phrased 
in terms of the $L$-structure, to make it easier to appeal to a version of Fra\"iss\'e's Theorem 
available in the literature for first order structures, and then afterwards the tree of $B$-sets 
structure was recovered from the (now infinite) $L$-relation. Our choice here is rather to treat 
both of these aspects, tree of $B$-sets, and $L$-relation, simultaneously, which involves proving 
the Fra\"iss\'e style result directly, but it does mean that the final deduction of the existence 
of the infinite tree of $B$-sets is eased. So now we have to introduce the general notions of 
`tree of $B$-sets' and $L$-relation.

A \emph{tree of $B$-sets} comprises a semilinear order $(T,<)$ (which may be coloured), which is a lower semi-lattice, 
and for each node $t$, a $B$-set $(B(t), B_t)$ of positive type, together with functions $f_t$ and $g_{s t}$
for $s \le  t$ in $T$, satisfying the following properties:

$f_t$ is a surjection from $\{s \in T: t < s\}$ to the set of ramification points of $B(t)$ such that for all $s_1, s_2 > t$, we have $f_t(s_1) = f_t(s_2)$ if and only if $s_1,s_2$ lie in the same cone above $t$;

$g_{s s}$ is the identity on $B(s)$; 

if $s < t$, $g_{s t}$ is a surjection from $B(s) \setminus \bigcup_{s \le u < t}g^{-1}_{s u} (f_u(t))$ to $B(t)$, and $g_{s t}(x) = g_{s t}(y)$ 
if and only if for some $u$ with $s \le u < t$, $g_{s u}(x)$ and $g_{s u}(y)$ lie in the same branch 
of $B_u$ at $f_u(t)$;

and if $s \le t \le u$, then $g_{s u} = g_{t u} \circ g_{s t}$.

We remark that this directly generalizes the definition above of finite tree of $B$-sets. The main case to 
note is that if $t$ is a successor of $s$, then $s \le u < t \Rightarrow s = u$, so the union reduces
to just one member, namely $g^{-1}_{s s}(f_u(t)) = f_u(t)$, so this agrees with the earlier condition. Note also that these conditions imply $B(t)$ is a linear betweenness relation if and only if $t$ is a maximal element of $T$.

\vspace{.1in}

Associated with any finite tree $A$ of $B$-sets the `$L$-set', which is a finite set $M^A$ on which there
is a ternary relation $L = L^A$, is now defined in what follows. In order to make the notation work well, 
we shall ensure that the members of the $B$-sets occurring in $A$ will be represented by finite subsets 
of $M^A$. This could be achieved formally by replacing the whole tree of $B$-sets by an isomorphic copy.
We choose not to make this too explicit, and instead blur the distinction between a member of one of the 
$B$-sets and the finite subset of $M^A$ which corresponds to it. The notation becomes particularly useful 
when we describe how the finite structures approximate the limit.

The domain of $M^A$ is taken to be that of the $B$-set at the root of $T^A$, namely $B(r)$, and the 
members of $B(r)$ may now be viewed as singletons of this domain according to the identification just
mentioned. More generally, if $a \in B(r)$ and $t \in T^A$, we let $[a]_t = g_{rt}^{-1}g_{rt}(a)$, provided 
that $a \in {\rm dom}(g_{rt})$. In other words, $[a]_t$ is the set of members of $B(r)$ which are mapped 
by $g_{rt}$ to the same point as $a$. By surjectivity of $g_{rt}$, all vertices of $B(t)$ can be written 
in the form $g_{rt}(a)$ for some $a \in B(r)$. This $a$ will  not be unique, if $r \neq t$. The $[a]_t$ 
for $a \in {\rm dom}(g_{r t})$ are therefore pairwise disjoint non-empty subsets of $M^A$. Note that if 
$r = t$, $[a]_r$ agrees with our decision to regard a member of $B(r)$ as the singleton $\{a\}$. Using 
this notation, the maps $g_{st}$ are instead given by $g_{st}[a]_s = [a]_t$.

We take a moment to justify this more formally. The original finite tree of $B$-sets has a tree $T^A$,
each $(B(t), B_t)$ is a $B$-set, and maps $f_t$ and $g_{st}$ for $s \le t$ are given. The isomorphic copy 
then has the same tree $T^A$, and $B'(t) = \{[a]_t: a \in {\rm dom } \, g_{rt}\}$. The bijection $\varphi_t$ 
from $B(t)$ to $B'(t)$ is given by $\varphi_t(g_{rt}(a)) = [a]_t$, and this induces a $B$-relation $B_t'$ 
on $B'(t)$. Finally we define $f_s'(t) = \varphi_s(f_s(t))$ if $s < t$, and $g'_{st}[a]_s = [a]_t$. It is
easily checked that this is an `arboreal isomorphism' (see below for the definition), the key point being
that $g'_{st}\varphi_s(x) = g'_{st}\varphi_s g_{rs}(a)$ (for some $a$), 
$= g'_{st}[a]_s = [a]_t = \varphi_t(g_{rt}(a)) = \varphi_t(g_{st}(x))$.

The ternary relation $L$ on $M^A$ is given by saying that $L(a; b, c)$ holds for distinct $a$, $b$, $c$,
provided that for some $t$, $g_{rt}(a)$, $g_{rt}(b)$, and $g_{rt}(c)$ are defined and distinct, and 
$B_t(g_{rt}(a); g_{rt}(b), g_{rt}(c))$. In the notation just introduced, this just says that
$[a]_t$, $[b]_t$, and $[c]_t$ are defined and distinct, and $B_t([a]_t; [b]_t, [c]_t)$.

A key point here is to show that we can pass freely between the first order structure $(M^A, L)$ (the 
`$L$-set') and the tree of $B$-sets from which it was defined.

\begin{lemma} \label{3.3} If $A$ is a finite tree of $B$-sets, and $L$ is the ternary relation on $M^A$ 
as defined above, then for any $a, b, c \in M^A$ for which $L(a; b, c)$, there is a unique $t \in T^A$ 
such that $[a]_t, [b]_t$, and $[c]_t$ are defined and distinct, and $B_t([a]_t; [b]_t, [c]_t)$. 
Furthermore, for any distinct $a, b, c \in M^A$, the $L$-relation holds between $a, b,$ and $c$ in some 
order. \end{lemma}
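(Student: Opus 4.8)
The plan is to isolate one structural observation and then derive both assertions from it. The observation is: \emph{if $s$ is a successor of $t'$ in $T^A$ and the three classes $[a]_s,[b]_s,[c]_s$ are defined and distinct, then in $B(t')$ the points $[a]_{t'},[b]_{t'},[c]_{t'}$ are incomparable with centroid $f_{t'}(s)$.} Indeed, by definition $g_{t's}$ is constant on each branch of $B_{t'}$ at $p:=f_{t'}(s)$ and takes distinct values on distinct branches, so distinctness of the three images forces $[a]_{t'},[b]_{t'},[c]_{t'}$ to lie in three different branches at $p$; in the tree $B(t')$ any path between two of them then passes through $p$, so none lies between the other two, and $p$ is their centroid. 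Before using this I would record the routine facts that, for $s\le t$ on a chain, ${\rm dom}(g_{rt})\subseteq{\rm dom}(g_{rs})$ and that distinctness of $[a]_t,[b]_t,[c]_t$ forces distinctness of $[a]_s,[b]_s,[c]_s$ (both immediate from $g_{rt}=g_{st}g_{rs}$, via Lemma \ref{3.2}); thus if the three classes are defined and distinct at $t$ they are defined and distinct at every node below $t$ on the chain from $r$.

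For the \emph{existence} (the ``furthermore''), I would run a descent from the root. At $r$ the singletons $\{a\},\{b\},\{c\}$ are distinct; I maintain as an invariant that the three classes are distinct at the current node $s$. If some betweenness relation $B_s$ holds among $[a]_s,[b]_s,[c]_s$, I stop, and $L$ holds in the corresponding order. Otherwise the triple is incomparable in the positive-type $B$-set $B(s)$ and so has a centroid $p$, necessarily a ramification point; choosing the successor $s'$ of $s$ with $f_s(s')=p$, the three classes fall into three distinct branches at $p$ and hence map to three distinct, defined points of $B(s')$, reinstating the invariant one node higher. As $T^A$ is finite this strictly ascending process must terminate, at the latest at a leaf, where $B_t$ is a linear betweenness relation and the relation therefore holds among any three distinct points. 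This produces a witnessing node for $L$.

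For \emph{uniqueness}, suppose $t_1$ and $t_2$ both witness $B_t([a]_t;[b]_t,[c]_t)$ (the argument is identical for any fixed order). Applying the observation to the successor on the chain toward a witness shows that whenever the three classes are distinct at $t$ and $s<t$, betweenness \emph{fails} at $s$; hence along any chain the witnessing node is the \emph{first} node at which betweenness holds, which rules out two comparable witnesses. For incomparable $t_1,t_2$ I set $v=t_1\curlywedge t_2$, so $v<t_1$ and $v<t_2$ and the first steps $s_1,s_2$ of the two chains lie in distinct cones at $v$, whence $f_v(s_1)\ne f_v(s_2)$. The three classes are distinct at $v$ and, reading along each chain, lie in distinct branches at both $f_v(s_1)$ and $f_v(s_2)$; the observation then presents each of $f_v(s_1)$ and $f_v(s_2)$ as the centroid of the same incomparable triple in $B(v)$, contradicting uniqueness of centroids. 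Hence $t_1=t_2$. I expect the one genuinely delicate step to be the structural observation itself---the passage from ``distinct images under $g_{t's}$'' to ``distinct branches, hence centroid $f_{t'}(s)$''---since it is there that the definition of the collapsing maps, positive type, and the identification of the values of $f_{t'}$ with ramification points must be combined; the remaining bookkeeping about domains and the propagation of distinctness down a chain is routine given Lemma \ref{3.2}.
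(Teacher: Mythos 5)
Your proof is correct and follows essentially the same route as the paper: both establish existence by the same centroid-following ascent from the root (incomparable triple at a node forces the unique successor over its centroid to be the only place where the three classes stay distinct, terminating by finiteness, with leaves carrying linear relations), and both derive uniqueness from the fact that distinctness at a successor forces the triple below to be incomparable with centroid $f_{t'}(s)$. The only organizational difference is in uniqueness: the paper shows the downward-closed set $S$ of nodes with defined, distinct classes is a chain whose maximum is the unique witness, whereas you split into comparable witnesses (ruled out since betweenness fails strictly below any distinctness node) and incomparable ones (ruled out at the meet $t_1 \curlywedge t_2$ by uniqueness of centroids) --- the same underlying facts, packaged differently.
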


\begin{proof} First note that if $s \le t$ and $[a]_t$ is defined, then so is $[a]_s$, since 
$g_{rt} = g_{st} \circ g_{rs}$. Furthermore, if two of $[a]_s$, $[b]_s$, and $[c]_s$ are (defined and) 
equal, say $[a]_s = [b]_s$, then also $[a]_t = [b]_t$ (meaning that if one of them is defined, then so 
is the other, and they are equal), again since $g_{rt} = g_{st} \circ g_{rs}$. Hence the set $S$ of nodes 
$s$ at which $[a]_s$, $[b]_s$, and $[c]_s$ are defined and distinct is downwards closed in $T$, and by 
hypothesis contains the root $r$. We show that $S$ is linearly ordered, and its maximal element $s$ 
gives the desired node. For this we see that if $s \in S$ and $\{[a]_s, [b]_s, [c]_s\}$ is 
incomparable in $B_s$, then there is a unique node of $S$ which is a successor to $s$. For as 
$B_s$ is a $B$-relation of positive type, there is a unique centroid of these three points, which may be 
written as $[d]_s$, and they lie in distinct branches of $[d]_s$, so if $t$ is the successor of $s$ for 
which $f_s(t) = [d]_s$ then $[a]_t$, $[b]_t$, and $[c]_t$ are defined and distinct, so $t \in S$. If 
$[e]_s$ is any other point of $B_s$, then two of $[a]_s$, $[b]_s$, and $[c]_s$ lie in the same branch at 
$[e]_s$, so if $u$ is the successor of $s$ for which $f_s(u) = [e]_s$ (if it exists at all), then 
$[a]_u$, $[b]_u$, and $[c]_u$ are not distinct, and $u \not \in S$. 

Since $S$ is finite, this process must terminate, and the only way that this can happen is that one of 
$[a]_s$, $[b]_s$, and $[c]_s$ lies between the other two in $B_s$. To see that this $s$ is maximal 
in $S$, suppose otherwise, and let $t \in succ(s)$ lie in $S$. Then $[a]_t$, $[b]_t$, and $[c]_t$ all 
exist, so $f_s(t) = [d]_s$ for some $[d]_s \neq [a]_s, [b]_s, [c]_s$. Then one sees that two of 
$[a]_s$, $[b]_s$, and $[c]_s$ lie in the same branch at $[d]_s$, so that actually their images under 
$g_{st}$ are not distinct after all. Thus $s$ is the unique greatest point of $S$, and since 
$L(a; b, c)$ it must be $[a]_s$ which lies between the other two. 

For the final remark, the same argument shows that there is a greatest node $s$ at which $[a]_s$, 
$[b]_s$, and $[c]_s$ are defined and distinct. By maximality, one of them must lie between the other 
two in $B_s$, which tells us that the $L$-relation holds for $a, b,$ and $c$ in some order.  \end{proof}

In summary, if $L(a; b,c)$ holds then there is a unique $s$ at which $B_s([a]_s; [b]_s, [c]_s)$ holds 
for distinct $[a]_s$, $[b]_s$, $[c]_s$. We say that $L(a; b, c)$ is {\em witnessed} in $(B(s), B_s)$. 

\begin{lemma} \label{3.4} If $B_1$ and $B_2$ are $B$-relations of positive type on the same set 
$M$, and $B_1 \subseteq B_2$, then $B_1 = B_2$.   \end{lemma}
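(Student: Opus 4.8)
The plan is to prove the reverse inclusion $B_2 \subseteq B_1$, since $B_1 \subseteq B_2$ is given. I would first observe that whenever two of the three arguments coincide the relation holds automatically in any $B$-set: taking $x = y$ in (B2) forces $B(x; x, z)$, so triples with a repeated entry lie in both $B_1$ and $B_2$ and require no attention. It therefore suffices to take \emph{distinct} $x, y, z$ with $B_2(x; y, z)$ and deduce $B_1(x; y, z)$. I argue by contradiction, assuming $B_2(x; y, z)$ holds while $B_1(x; y, z)$ fails, and split according to how $\{x, y, z\}$ sits in $B_1$.

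If $\{x, y, z\}$ is comparable in $B_1$, then since $B_1(x; y, z)$ is assumed to fail, either $B_1(y; x, z)$ or $B_1(z; x, y)$ holds. Using $B_1 \subseteq B_2$, the corresponding betweenness holds in $B_2$; combining it with $B_2(x; y, z)$ (after re-ordering the last two arguments via (B1) where needed) and applying (B2) yields $x = y$ or $x = z$, contradicting distinctness.

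The substantial case is when $\{x, y, z\}$ is incomparable in $B_1$. Here positive type of $B_1$ (axiom (B5)) supplies a centroid $u$ with $B_1(u; x, y)$, $B_1(u; x, z)$ and $B_1(u; y, z)$; since $B_1(x; y, z)$ fails, $B_1(u; y, z)$ forces $u \neq x$. Pushing these three relations into $B_2$ via $B_1 \subseteq B_2$, I then work entirely inside $B_2$, where I also have $B_2(x; y, z)$. Applying (B3) to $B_2(x; y, z)$ with fourth point $u$ gives $B_2(x; y, u) \vee B_2(x; u, z)$; in the first disjunct, (B2) applied to $B_2(x; u, y)$ (from (B1)) and $B_2(u; x, y)$ forces $x = u$, and in the second disjunct, (B2) applied to $B_2(x; u, z)$ and $B_2(u; x, z)$ again forces $x = u$. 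Either way this contradicts $u \neq x$.

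Having excluded both cases, $B_1(x; y, z)$ must hold, giving $B_2 \subseteq B_1$ and hence $B_1 = B_2$. The only delicate point, and the step I expect to need the most care, is the incomparable case: specifically, lining up the argument orders so that (B3) followed by (B2) collapses the centroid $u$ onto $x$. The comparable case and the reduction to distinct arguments are routine.
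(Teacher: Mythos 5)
Your proof is correct and follows essentially the same route as the paper's: pick a triple witnessing $B_2\setminus B_1\neq\emptyset$, reduce to distinct points, exclude $B_1$-comparability via (B2), then use the $B_1$-centroid supplied by positive type and collapse it onto $x$ inside $B_2$ using (B3) followed by (B2). The only slip is in the degenerate case: a triple $(x;y,y)$ with $x\neq y$ lies in \emph{neither} relation (the axioms force $x=y$ from $B(x;y,y)$, as the paper notes), rather than in both as you assert, but this only reinforces your conclusion that such triples need no attention.
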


\begin{proof} If not, $B_1$ is a proper subset of $B_2$, so there must be a triple such that $B_2(a;b,c)$ 
but not $B_1(a;b,c)$. Consequently $a$, $b$, $c$ cannot be $B_1$-related in any order. First they 
must be distinct, for if $a=b$ or $c$, then $B_1(a;b,c)$ holds. If $b=c$, then from $B_2(a;b,c)$ it 
follows that $a = b = c$. Now since all three are distinct, and if for instance $B_1(b;a,c)$, then also 
$B_2(b;a,c)$ from which by axiom (B2), $a = b$ after all, and similarly in other cases. Since 
$\{a, b, c\}$ is $B_1$-incomparable, and $B_1$ is of positive type, there is $x \neq a, b, c$ such that 
$B_1(x;a,b)$ and $B_1(x;a,c)$ and $B_1(x;b,c)$. Since $B_2(a;b,c)$, by axiom (B3), $B_2(a;b,x)$ or 
$B_2(a;x,c)$. Hence (as $B_1 \subseteq B_2$) either $B_2(x;a,b)$ and $B_2(a;x,b)$ which implies that $x = a$, 
or else $B_2(x; a,c)$ and $B_2(a;x,c)$, which also implies that $x = a$, contradiction.    \end{proof}

\begin{lemma} \label{3.5} If $A$ is a finite tree of $B$-sets, and $L$ is the ternary relation on 
$M^A$ as defined above, then $B_r$ is the unique $B$-relation of positive type on $B(r)$ whose irreflexive 
version is contained in $L$.   \end{lemma}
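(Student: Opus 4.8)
The plan is to prove the statement in two parts: first that $B_r$ itself has its irreflexive version contained in $L$, and then that it is the only $B$-relation of positive type on $B(r)$ with this property. The first part is immediate from the definition of $L$: if $B_r(a;b,c)$ holds for distinct $a,b,c\in B(r)$, then taking $t=r$ and recalling that $[a]_r=\{a\}$, $[b]_r=\{b\}$, $[c]_r=\{c\}$ are defined and distinct, the clause $B_r([a]_r;[b]_r,[c]_r)$ witnesses $L(a;b,c)$. Hence the irreflexive version of $B_r$ lies in $L$.

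For uniqueness I would first extract from Lemma \ref{3.3} the sharper fact that for distinct $a,b,c$ the relation $L$ holds in \emph{exactly} one order: the witnessing node $s$ is the greatest node at which $[a]_s,[b]_s,[c]_s$ are defined and distinct, and at $s$ axiom (B2) forbids two of the three being between the others. Now let $B'$ be any $B$-relation of positive type on $B(r)$ whose irreflexive version is contained in $L$. Since any two $B$-relations of positive type agree on the degenerate triples (these values are forced by the axioms, e.g. $B(x;x,z)$ always holds and $B(x;z,z)$ fails for $x\neq z$, both derivable from (B2)), it suffices to compare the irreflexive versions; and by Lemma \ref{3.4} it is in fact enough to show that the irreflexive version of $B'$ is contained in that of $B_r$, after which $B'\subseteq B_r$ forces $B'=B_r$.

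So suppose $B'(a;b,c)$ with $a,b,c$ distinct; then $L(a;b,c)$, and this is the unique order in which $L$ holds. If $\{a,b,c\}$ is comparable in $B_r$, the point it places between the other two yields, by the first part, an instance of $L$ in that order, so by uniqueness of the order this point must be $a$, giving $B_r(a;b,c)$ as required. The remaining, and main, obstacle is the case where $\{a,b,c\}$ is \emph{incomparable} in $B_r$. Here positive type of $B_r$ supplies a centroid $e\notin\{a,b,c\}$ with $B_r(e;a,b)$, $B_r(e;a,c)$, $B_r(e;b,c)$, whence $L(e;a,b)$, $L(e;a,c)$, $L(e;b,c)$ by the first part. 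I would then apply (B3) to $B'(a;b,c)$ with fourth point $e$, obtaining $B'(a;b,e)\vee B'(a;e,c)$. But $B'(a;b,e)$ would give $L(a;b,e)$, contradicting $L(e;a,b)$, and $B'(a;e,c)$ would give $L(a;c,e)$, contradicting $L(e;a,c)$; so both disjuncts fail, contradicting (B3). Thus the incomparable case cannot occur, every $B'$-betweenness is a $B_r$-betweenness, and Lemma \ref{3.4} completes the proof.
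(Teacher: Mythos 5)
Your proof is correct, and its engine is the same as the paper's: reduce everything to showing that any competitor $B'$ satisfies $B'\subseteq B_r$, so that Lemma \ref{3.4} finishes, and obtain the crucial contradiction by applying (B3) to $B'(a;b,c)$ at the centroid of $\{a,b,c\}$ in $B_r$, using the fact (read off from the proof of Lemma \ref{3.3} together with (B2)) that $L$ holds in at most one order on each triple of distinct points. The one genuine difference is where the centroid comes from. The paper stays inside the tree machinery: since $L(a;b,c)$ is witnessed at some node $t>r$, it takes the successor $s$ of $r$ below $t$ and sets $w=f_r(s)$, so that $a$, $b$, $c$ lie in distinct branches at $w$; this produces the separating point and handles the comparable and incomparable configurations in one stroke. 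You instead split into cases according to whether $\{a,b,c\}$ is comparable in $B_r$ (disposed of immediately by uniqueness of the $L$-order) or incomparable, and in the latter case you invoke the positive-type axiom (B5) for $B_r$ directly to produce the centroid $e$. Your route is slightly more self-contained and more general: once uniqueness-of-order is extracted, your argument never touches the tree of $B$-sets again, and it actually proves the abstract statement that two $B$-relations of positive type on the same set, whose irreflexive parts both lie in a ternary relation holding in at most one order per triple, must coincide. You are also more explicit than the paper about two points it leaves implicit, namely that $irr(B_r)\subseteq L$ (immediate from the definition of $L$ with witness $t=r$) and that the degenerate triples of any $B$-relation are forced by the axioms, so that containment of irreflexive parts really does yield $B'\subseteq B_r$. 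Both arguments deliver the lemma; yours trades the paper's use of the tree structure for one extra case distinction and one extra axiom.
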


\begin{proof} By the `irreflexive version' $irr(B)$ of $B$ we understand $B \setminus \{(a; b,c): a = b \vee a = c\}$. 
(Note that we don't need to remove triples $(a; b, c)$ such that $b = c$ since by the 
axioms for $B$-relations, any such in $B$ would also satisfy $a = b = c$.) 

Let $B$ be a $B$-relation of positive type on $B(r)$, and suppose that $irr(B) \subseteq L$. Suppose for a 
contradiction that $B \not \subseteq B_r$, and let a triple $(a;b,c)$ lie in $B$ but not in $B_r$. If 
two of $a, b, c$ were equal, then $(a; b,c)$ would also lie in $B_r$. Hence they are all distinct, so 
$(a; b, c)$ lies in $irr(B)$, and hence also in $L$. By Lemma \ref{3.3} there is a unique $t \in T$ at which 
$L(a;b,c)$ is witnessed. In other words, $[a]_t$, $[b]_t$, and $[c]_t$ are defined and distinct 
and $B_t([a]_t; [b]_t, [c]_t)$. Since $\neg B_r(a; b, c)$, we see that $r < t$. Let $s$ be the successor 
of $r$ such that $r < s \le t$. Then $a$, $b$, and $c$ lie in distinct branches of the ramification 
point $w = f_r(s)$. Since $B$ is a $B$-relation, $B(a; b, w)$ or $B(a; w, c)$. Since $a$, $b$, $c$ 
lie in distinct branches of $w$, $B_r(w; a, b)$, and $B_r(w; a, c)$, and hence $L(w; a, b)$ and 
$L(w; a, c)$. Therefore from the properties of $L$-relations, $\neg L(a; w, b)$ and $\neg L(a; w, c)$. 
Therefore, as $irr(B) \subseteq L$, $\neg B(a; w, b)$ and $\neg B(a; w, c)$. This gives a contradiction.
      
We deduce that $B \subseteq B_r$, and hence by Lemma \ref{3.4}, $B = B_r$.       \end{proof}

To make the inductive part of Lemma \ref{3.7} work smoothly, it is helpful to be able to restrict a tree 
of $B$-sets to the set of vertices above some fixed vertex. If $A$ is a finite tree of $B$-sets, and 
$s \in T^A$, let $A^{\ge s}$ be the tree of $B$-sets having vertices in $\{t \in T^A: s \le t\}$ and with 
$B$-sets and $f$ and $g$ functions inherited from those of $A$. It is clear that this is a finite 
tree of $B$-sets.

\begin{lemma} \label{3.6} Let $A$ be a finite tree of $B$-sets, and $s \in T^A$. Let $L$ be the ternary 
relation on $M^A$ as defined above. Then the $L$-relation on the restricted tree of $B$-sets $A^{\ge s}$ 
is equal to the set of triples of the form $(g_{rs}(a); g_{rs}(b), g_{rs}(c))$ such that $(a; b, c) \in L$. \end{lemma}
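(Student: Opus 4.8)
The plan is to reduce the whole statement to two facts about the connecting maps: the composition identity $g_{rt} = g_{st} \circ g_{rs}$, valid for $r \le s \le t$ by Lemma \ref{3.2}, and the surjectivity of $g_{rs}$ onto $B(s)$ (a composite of the surjections $g_{s_i s_{i+1}}$). Recall that the root of $A^{\ge s}$ is $s$, so its $L$-relation lives on $B(s)$, and the inherited maps are literally the original $g_{st}$ for $t \ge s$. Since both $L$ and the $L$-relation of $A^{\ge s}$ hold only for \emph{distinct} triples, I would establish the equality as a biconditional on distinct triples: for distinct $x, y, z \in B(s)$, the relation of $A^{\ge s}$ holds at $(x; y, z)$ if and only if there are (necessarily distinct) $a, b, c$ in the domain of $g_{rs}$ with $x = g_{rs}(a)$, $y = g_{rs}(b)$, $z = g_{rs}(c)$ and $L(a; b, c)$. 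A formal triple $(g_{rs}(a); g_{rs}(b), g_{rs}(c))$ in which $g_{rs}$ collapses two of the entries lies in neither side, so restricting to the distinct case loses nothing.

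For the forward direction I would start from a triple of the $L$-relation of $A^{\ge s}$, witnessed at some node $t \ge s$: that is, $g_{st}(x), g_{st}(y), g_{st}(z)$ are defined and distinct and $B_t(g_{st}(x); g_{st}(y), g_{st}(z))$ holds. Using surjectivity of $g_{rs}$, pick $a, b, c$ with $g_{rs}(a) = x$, $g_{rs}(b) = y$, $g_{rs}(c) = z$; these are distinct because $x, y, z$ are. The composition identity gives $g_{rt}(a) = g_{st}(g_{rs}(a)) = g_{st}(x)$, and likewise for $b$ and $c$, so the same node $t$ witnesses $L(a; b, c)$ in $A$. Thus $(x; y, z)$ has the required form, and this direction is essentially bookkeeping through $g_{rs}$.

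The substantive direction is the converse, and the one point needing care is to check that the witness for $L$ already lies in the restricted tree. Given $L(a; b, c)$ with $g_{rs}(a), g_{rs}(b), g_{rs}(c)$ distinct, Lemma \ref{3.3} supplies a unique node $t$ at which $[a]_t, [b]_t, [c]_t$ are defined and distinct and $B_t([a]_t; [b]_t, [c]_t)$ holds; its proof moreover shows that the set of nodes at which these classes are defined and distinct is downward closed and linearly ordered, with maximum $t$. Since $g_{rs}(a), g_{rs}(b), g_{rs}(c)$ are distinct, the node $s$ belongs to this chain, whence $s \le t$ and $t$ is a node of $T^{A^{\ge s}}$. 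Applying the composition identity once more, $g_{st}(g_{rs}(a)) = g_{rt}(a)$ and similarly for $b$ and $c$, so these three images are defined, distinct, and $B_t$-related in the same order; hence $t$ witnesses the $L$-relation of $A^{\ge s}$ at $(g_{rs}(a); g_{rs}(b), g_{rs}(c))$. I expect this step---pinning down that $s \le t$ via the chain structure extracted from Lemma \ref{3.3}---to be the only genuine obstacle, the remainder being a direct transcription through $g_{rs}$.
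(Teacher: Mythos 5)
Your proof is correct and follows essentially the same route as the paper's: both arguments come down to the composition identity $g_{rt}=g_{st}\circ g_{rs}$ (so that the witnessing node and $B_t$-configuration are literally shared between $A$ and $A^{\ge s}$), together with the chain structure from Lemma \ref{3.3} which forces the witness for $L(a;b,c)$ to lie above $s$ once $g_{rs}(a),g_{rs}(b),g_{rs}(c)$ are distinct. If anything, you spell out explicitly the backward-direction step (that the witness node satisfies $t\ge s$) which the paper's chain of equivalences leaves implicit.
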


\begin{proof} Applying the definitions to $A^{\ge s}$, we see that every member of $B(t)$ for $t \ge s$ may 
be written in the form $[[a]_s]_t$ for some $[a]_s \in B(s)$ (in the domain of $g_{st}$). Now
$[[a]_s]_t = g_{st}[a]_s = g_{st}g_{rs}[a]_r = g_{rt}[a]_r = [a]_t$. Thus $([a]_s; [b]_s, [c]_s)$ lies in
the $L$-relation as defined in $A^{\ge s}$ if there is $t \ge s$ such that $[[a]_s]_t$, $[[b]_s]_t$, and 
$[[c]_s]_t$ are defined and distinct, and $([[a]_s]_t; [[b]_s]_t, [[c]_s]_t)$ lies in $B_t$. This is 
therefore the same as requiring that $[a]_t$, $[b]_t$, and $[c]_t$ are defined and distinct, and 
$([a]_t; [b]_t, [c]_t) \in B_t$, which is the same as saying that $(a; b, c) \in L$.  \end{proof}

We now work in a first order language $\mathcal L$, which has just $L$ as relation symbol. (In \cite{Bradley} 
a two-sorted language was used at this point, and there, and also in \cite{Bhattacharjee}, a 
quaternary relation was added. We shall see that this is unnecessary.)  The notion of isomorphism for 
$L$-sets is as usual for first order structures.

For the trees of $B$-sets, the natural notion of isomorphism is called `arboreal'. An {\em arboreal isomorphism} 
between trees of $B$-sets $A$ and $A'$ over the same structure tree $T$ is a tree automorphism $\tau$ from 
$T^A$ to $T^{A'}$, together with a family of $B$-set isomorphisms $\varphi_s: B(s) \to B(\tau(s))$ for each 
$s \in T^A$, which respect the $f$ and $g$ maps, meaning that for each $s < t$ in $T^A$, and 
$x \in {\rm dom}(g_{st})$, $f_{\tau(s)} \tau(t) = \varphi_s f_s(t)$, and 
$g_{\tau(s)\tau(t)} \varphi_s(x) = \varphi_t g_{st}(x)$. In the finite case, it is sufficient to require 
this just when $t$ is a successor of $s$ (and the general statement follows by composition).

There is a rather stronger notion, also required. An {\em inner arboreal isomorphism} is an 
arboreal isomorphism for which $\tau$ is the restriction of an automorphism of the ambient tree 
$T$, now augmented by colours from $C$ (so the automorphisms are required to preserve `levels'). Note that 
this is definitely a stronger notion than just saying that it takes $T^A$ 
isomorphically to $T^{A'}$, since the way in which $T^A$ and $T^{A'}$ are situated inside $T$ needs 
to be taken into account. In \cite{Bhattacharjee} this was unnecessary, since any isomorphism from 
$T^A$ to $T^{A'}$ would extend to an automorphism (since these are meet-closed subtrees), but in the 
discrete case this is false, since levels (colours) are required to be preserved.

We note that in \cite{Bhattacharjee}, the trees $T^A$ are not taken explicitly as subtrees of a given tree; 
rather, as the Fra\"iss\'e limit is constructed during the proof, the tree emerges as the generic tree 
(rational maximal chains, all vertices ramifying with ramification order $\aleph_0$). By contrast, in 
\cite{Bradley}, the tree is given in advance, in that case having discrete maximal chains, and this of course 
affects how the extensions can be carried out. Our philosophy here is to combine these two approaches, 
by insisting that the tree $T$ is specified in advance, and the $T^A$ are subtrees, but the capacity to 
perform extensions, and therefore how the lemmas unfold which are required to make everything work, 
depends very much on what $T$ actually is. Note that in \cite{Bhattacharjee}, by `structure tree' is meant
the {\em finite} tree on which the finite tree of $B$-sets is defined, whereas for us, it means the 
(infinite, `ambient') tree $T$, whose finite subtrees give rise to the finite trees of $B$-sets.

\begin{lemma} \label{3.7} If $A$ and $A'$ are finite trees of $B$-sets over the same structure tree 
$T$, for any arboreal isomorphism from $A$ to $A'$ there is an $\mathcal L$-isomorphism between 
the corresponding structures on $M^A$ and $M^{A'}$. 

Conversely, any isomorphism between $\mathcal L$-structures associated with finite trees of $B$-sets 
$A$ and $A'$ arises from an arboreal isomorphism from $A$ to $A'$. \end{lemma}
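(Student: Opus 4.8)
The plan is to treat the two directions separately: the first is a direct computation using $\varphi_r$ as the map, and the second is an induction on the size of the tree that reconstructs the arboreal data from the $L$-relation.

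For the forward direction, suppose $(\tau,(\varphi_s)_{s\in T^A})$ is an arboreal isomorphism from $A$ to $A'$, and let $r,r'$ be the roots of $T^A,T^{A'}$. Since $\tau$ is an order-isomorphism of finite trees with least element, $\tau(r)=r'$, so $\varphi_r$ is a bijection from $B(r)=M^A$ to $B(r')=M^{A'}$; this will be the claimed $\mathcal L$-isomorphism. The only thing to verify is that $\varphi_r$ preserves $L$. Here I would use the compatibility condition $g_{r'\,\tau(t)}\,\varphi_r=\varphi_t\,g_{rt}$, which holds for successors by definition of arboreal isomorphism and for all $t\ge r$ by composition. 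Unwinding the definition of $L^A$, for distinct $a,b,c$ we have $L^A(a;b,c)$ iff for some $t$ the images $g_{rt}(a),g_{rt}(b),g_{rt}(c)$ are defined, distinct and satisfy $B_t$; applying the $B$-set isomorphism $\varphi_t$ (which preserves definedness of domains through $\varphi_r$, preserves distinctness, and preserves the $B$-relation) and rewriting $\varphi_t g_{rt}=g_{r'\tau(t)}\varphi_r$ turns this, because $\tau$ maps $T^A$ bijectively onto $T^{A'}$, into the statement that $L^{A'}(\varphi_r(a);\varphi_r(b),\varphi_r(c))$ holds. Hence $\varphi_r$ is an $\mathcal L$-isomorphism, and this is the map induced by the given arboreal isomorphism.

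For the converse, let $\psi\colon(M^A,L^A)\to(M^{A'},L^{A'})$ be an $\mathcal L$-isomorphism; I would build an arboreal isomorphism $(\tau,(\varphi_s))$ with $\varphi_r=\psi$, so that by the forward direction $\psi$ is exactly the induced map. Set $\tau(r)=r'$ and $\varphi_r=\psi$. The first key step is to recover the root $B$-set: the pushforward $\psi_*(B_r)$ is again a positive-type $B$-relation whose irreflexive version lies in $\psi_*(\mathrm{irr}(B_r))\subseteq\psi_*(L^A)=L^{A'}$, so Lemma \ref{3.5} applied to $A'$ forces $\psi_*(B_r)=B_{r'}$; thus $\varphi_r$ is a $B$-set isomorphism $B_r\to B_{r'}$. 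Being such, it carries ramification points of $B_r$ bijectively to ramification points of $B_{r'}$ and branches to branches. For each successor $s$ of $r$ with $f_r(s)=w$, I would set $w'=\varphi_r(w)$ and let $\tau(s)$ be the successor $s'$ of $r'$ with $f_{r'}(s')=w'$, then define $\varphi_s$ on $B(s)$ by $\varphi_s(g_{rs}(a))=g_{r's'}(\psi(a))$; well-definedness and bijectivity follow because $\psi$ matches the branches at $w$ with those at $w'$, and the $f$-compatibility $f_{r'}\tau(s)=\varphi_r f_r(s)$ and $g$-compatibility $g_{r's'}\varphi_r=\varphi_s g_{rs}$ hold by construction.

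The heart of the converse is then an induction on $|T^A|$ via the restricted trees $A^{\ge s}$ defined above. By Lemma \ref{3.6}, the $L$-relation of $A^{\ge s}$ is the $g_{rs}$-image of $L^A$, and likewise for $(A')^{\ge s'}$; since $\varphi_s(g_{rs}(a))=g_{r's'}(\psi(a))$ and $\psi$ is an $\mathcal L$-isomorphism, $\varphi_s$ is an $\mathcal L$-isomorphism of the $L$-structures of $A^{\ge s}$ and $(A')^{\ge s'}$. As $A^{\ge s}$ has strictly fewer nodes than $A$, the induction hypothesis yields an arboreal isomorphism of these restricted trees of $B$-sets inducing $\varphi_s$; assembling these over all successors $s$ of $r$ (they agree with the already-defined $\tau(s),\varphi_s$ at the roots $s,s'$) produces the required arboreal isomorphism from $A$ to $A'$, whose $\varphi_r$ is $\psi$ by construction. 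I expect the main obstacle to be precisely this reconstruction: checking that the recovered data $(\tau,(\varphi_s))$ really respects the $f$ and $g$ maps at every node and glues consistently, with the two-step engine of Lemma \ref{3.5} (to pin down each $B_s$ as the unique positive-type $B$-relation inside the restricted $L$) and Lemma \ref{3.6} (to perform the restriction) doing the essential work.
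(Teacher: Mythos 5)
Your proposal is correct and follows essentially the same route as the paper: the forward direction uses the root map $\varphi_r$ together with the unique witnessing node from Lemma \ref{3.3}, and the converse pins down the root $B$-relation via Lemma \ref{3.5}, forces $\tau$ and the $\varphi$-maps at successors by $f$- and $g$-compatibility, and controls the higher levels through Lemma \ref{3.6}. The only difference is bookkeeping: where the paper runs an explicit induction on the height of nodes, re-verifying at each node that the transported relation equals $B_{\tau(t)}$, you instead recurse by applying the lemma itself to the restricted trees $A^{\ge s}$ and $(A')^{\ge s'}$ and then glue the resulting arboreal isomorphisms over the successors of the root---an equivalent repackaging of the same argument.
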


\begin{proof} First suppose that $\tau$ and $B$-set isomorphisms $\varphi_s$ for $s \in T^A$ witness 
the given arboreal isomorphism from $A$ to $A'$. Since we are representing $B(r)$ as singletons of
members of $M^A$, the domain of $M^A$ is equal to $\{a: [a]_r \in B(r)\}$ where $(B(r), B_r)$ is the 
$B$-set at the root $r$. We are given the $B$-set isomorphism $\varphi_r: B(r) \to B(\tau(r))$, and 
we may define $\psi$ on $M^A$ by letting $\{\psi(a)\} = \varphi_r([a]_r)$. We have to see that this 
respects $L$. Suppose then that $L(a; b, c)$ holds in $M^A$. Then there is some (unique, by Lemma 
\ref{3.3}) $s$ such that $[a]_s$, $[b]_s$, and $[c]_s$ are defined and distinct, and 
$B_s([a]_s; [b]_s, [c]_s)$ holds in $A$. Since $\varphi_s$ is a $B$-set isomorphism, 
$B_{\tau(s)}(\varphi_s([a]_s); \varphi_s([b]_s), \varphi_s([c]_s))$ holds in $A'$. Thus, 
$\varphi_s([a]_s)$, $\varphi_s([b]_s)$, $\varphi_s([c]_s)$ are defined and distinct, and so 
$L(\varphi_s([a]_s); \varphi_s([b]_s), \varphi_s([c]_s))$ holds in $M^{A'}$, as required.

Conversely, suppose that $\psi$ is an isomorphism between the two $\mathcal L$-structures $L$ and 
$L'$ on $M^A$ and $M^{A'}$ arising from finite trees of $B$-sets $A$ and $A'$. We construct the 
isomorphism $\tau$ from the tree $T^A$ to $T^{A'}$, and $B$-set isomorphisms $\varphi_s$ for
$s \in T^A$ inductively, starting at the roots $r$ and $r'$. We let $\tau(r) = r'$. Now 
$B(r) = \{[a]_r: a \in M^A\}$ and $B_r$ is a $B$-relation of positive type on $M^A$, whose 
irreflexive part is contained in $L$. Hence $\psi(B_r)$ is a $B$-relation of positive type on 
$M^{A'}$ whose irreflexive part is contained in $L'$. By Lemma \ref{3.5} there is a unique such, 
which is $B_{r'}$. Hence $\psi(B_r) = B_{r'}$, and so we can let $\varphi_r([a]_r) = [\psi(a)]_{r'}$.

We now define $\tau(s)$ and $\varphi_s$ inductively on the height of $s$ in $T^A$ (essentially as 
in \cite{Bhattacharjee} with minor alterations), the basis case having just been done. Suppose that 
$\tau(s)$ and $\varphi_s: B(s) \to B(\tau(s))$ have been defined, and we show how to define $\tau(t)$ 
and $\varphi_t$ for each successor $t$ of $s$. Now $f_s(t) \in B(s)$, so $\varphi_s(f_s(t)) \in B(\tau(s))$, 
which is a ramification point, hence in the range of $f_{\tau(s)}$. Furthermore, $f_s$ is a bijection 
from $succ(s)$ to the set of ramification points of $B(s)$, and similarly, $f_{\tau(s)}$ is a bijection 
from $succ(\tau(s))$ to the set of ramification points of $B(\tau(s))$. We can therefore let 
$\tau(t) = f^{-1}_{\tau(s)}\varphi_s f_s(t)$, and this extension of $\tau$ to $succ(s)$ is a bijection 
to $succ(\tau(s))$. Furthermore, $f_{\tau(s)}\tau = \varphi_sf_s$ as required in the definition of 
`arboreal isomorphism'.

If $t \in succ(s)$, then $g_{st}$ maps $B(s) \setminus \{f_s(t)\}$ onto $B(t)$, and constitutes a 
bijection between the branches of $B_s$ at $f_s(t)$ and the points of $B(t)$, and similarly, 
$g_{\tau(s)\tau(t)}$ gives a bijection between the branches of $B_{\tau(s)}$ at $f_{\tau(s)}(\tau(t))$ 
and the points of $B(\tau(t))$. We may therefore let 
$\varphi_t = g_{\tau(s)\tau(t)} \varphi_s g_{st}^{-1}$. Note that here for each $x \in B(t)$, $g_{st}^{-1}(x)$ 
is a branch of $B_s$ at $f_s(t)$, and as $\varphi_s$ is a $B$-set isomorphism,
$\varphi_s g_{st}^{-1}(x)$ is a branch of $B_{\tau(s)}$ at $f_{\tau(s)}(\tau(t))$, which therefore gets 
mapped to a single point by $g_{\tau(s)\tau(t)}$. Since $\varphi_t$ is a bijection from $B(t)$ to 
$B(\tau(t))$, the image $\varphi_t(B_t)$ of the $B$-relation $B_t$ under $\varphi_t$ is a $B$-relation 
$B_{\tau(t)}$ of positive type on $B(\tau(t))$. We show that its irreflexive part is contained in the restriction of 
$L'$ to the points above $\tau(t)$. For let $[a]_t$, $[b]_t$, $[c]_t$ be distinct members of $B(t)$ such 
that $B_t([a]_t; [b]_t, [c]_t)$. By definition, it follows that $L(a; b, c)$, and since $\psi$
preserves the $L$-structure, $L'(\psi(a); \psi(b), \psi(c))$. By definition of $\varphi_r$, 
$L'(\varphi_r([a]_r); \varphi_r([b]_r), \varphi_r([c]_r))$. By Lemma \ref{3.6}, this is equivalent to 
$L'([\varphi_t(a)]_{\tau(t)}; [\varphi_t(b)]_{\tau(t)}, [\varphi_t(c)]_{\tau(t)})$. By Lemma \ref{3.5} 
this is equal to $irr(B_{\tau(t)})$. Finally, $g_{\tau(s)\tau(t)} \varphi_s = \varphi_t g_{st}$ as 
required.      \end{proof} 

In order to form our intended limit, which will give rise to the desired Jordan group preserving a 
limit of $B$-relations, we shall use a Fra\"iss\'e-style method, and the key point is to establish 
amalgamation for finite trees of $B$-sets. Up till now we have just considered trees of $B$-sets up 
to isomorphism. For the Fra\"iss\'e theory, it is crucial however to have the correct notion of 
substructure. The care that has to be taken is illustrated by an example given in \cite{Bhattacharjee} 
pages 66,67. That showed that additional relations needed to be added to make things work correctly. 
We can however achieve the same effect by carefully formulating the notion of substructure.

Let us say that if $B_1$ and $B_2$ are $B$-sets of positive type on sets $X_1$ and $X_2$, then $B_1$ is 
a {\em strong substructure} of $B_2$ if $X_1 \subseteq X_2$, and $B_1 \subseteq B_2$, and $B_1$ equals
the restriction $B_2 \upharpoonright X_1$ of $B_2$ to $X_1$. One checks that this is equivalent to saying
that for any incomparable (in either $B_1$ or $B_2$) $x, y, z \in X_1$ their centroid is the same in 
$B_1$ and $B_2$ (and hence lies in $X_1$). This gives rise to a similar notion of {\em strong embedding} 
of $B$-sets of positive type in the obvious way. There is a corresponding notion of strong substructure 
of a tree of $B$-sets. Because of complications about identification of points in different structures, 
we rather give a definition of `strong (coloured) arboreal embedding' (similarly for strong inner arboreal 
embedding). Though this is mainly used in the finite case, it is required later on for possibly infinite 
trees of $B$-sets, so we give the general definition. A {\em strong arboreal embedding} of trees of 
$B$-sets $A_1$ into $A_2$ over the same ambient tree $T$ consists of a tree embedding $\tau$ from 
$T^{A_1}$ to  $T^{A_2}$ (which means that it preserves colours, $<$ and meets), together with a family of 
strong $B$-set embeddings $\varphi_t: B(t) \to B(\tau(t))$ for each $t \in T^{A_1}$, which respect the 
$f$ and $g$ maps, meaning that for each $t \in T^{A_1}$, $\varphi_tf_t = f_{\tau(t)}\tau$, and 
for each $s \le  t$ in $T^{A_1}$, $g_{\tau(s)\tau(t)} \varphi_s = \varphi_t g_{st}$. We say that $A_1$
is a {\em strong substructure} of $A_2$ if $A_1$ is a substructure, and the inclusion map is a strong 
embedding, where by the `inclusion map' we mean $(\tau, \{\varphi_t: t \in T^{A_1}\})$ where $\tau$ is
inclusion from $T^{A_1}$ to $T^{A_2}$ and for each $t \in T^{A_1}$, $\varphi_t$ is inclusion from 
$(B(t), B_t)$ in $A_1$ to $(B(t), B_t)$ in $A_2$.

The following result is adapted from Lemma \ref{3.7} for embeddings in place of isomorphisms.

\begin{lemma} \label{3.8} If $A_1$ and $A_2$ are trees of $B$-sets over the same structure tree 
$T$, for any strong embedding  from $A_1$ to $A_2$ there is an embedding between the 
corresponding $L$-structures on $M^{A_1}$ and $M^{A_2}$.    \end{lemma}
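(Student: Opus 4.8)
The plan is to mimic the forward direction of Lemma~\ref{3.7}, but replacing the isomorphisms with embeddings and carefully tracking that the strong embedding condition is exactly what is needed to transfer the $L$-relation faithfully. Suppose a strong arboreal embedding from $A_1$ to $A_2$ is given by the tree embedding $\tau: T^{A_1} \to T^{A_2}$ together with strong $B$-set embeddings $\varphi_t: B(t) \to B(\tau(t))$ respecting the $f$ and $g$ maps. As in the isomorphism case, since the root $B$-set $B(r_1)$ is represented by singletons of $M^{A_1}$, I would first use $\varphi_{r_1}: B(r_1) \to B(\tau(r_1))$ to define a map $\psi$ on the domain $M^{A_1}$ by $\{\psi(a)\} = \varphi_{r_1}([a]_{r_1})$. (Here $\varphi_{r_1}([a]_{r_1})$ is a singleton in $M^{A_2}$ since $\varphi_{r_1}$ maps into $B(\tau(r_1))$, and although $\tau(r_1)$ need not be the root $r_2$ of $A_2$, its members are still of the form $[\,\cdot\,]_{\tau(r_1)}$, which are nonempty subsets of $M^{A_2}$; I would pick one representative.) Since $\varphi_{r_1}$ is an embedding of $B$-sets, $\psi$ is injective on $M^{A_1}$.

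Next I would verify that $\psi$ preserves the $L$-relation in both directions. The key tool is Lemma~\ref{3.6}, which says that the $L$-relation on the restricted tree $A_2^{\ge \tau(r_1)}$ is exactly the image under $g_{r_2\tau(r_1)}$ of the $L$-relation of $A_2$. So $L_1(a;b,c)$ is witnessed at some unique node $s \in T^{A_1}$ (by Lemma~\ref{3.3}), meaning $[a]_s,[b]_s,[c]_s$ are defined and distinct with $B_s([a]_s;[b]_s,[c]_s)$. Applying the $B$-set embedding $\varphi_s$ and the compatibility $g_{\tau(r_1)\tau(s)}\varphi_{r_1} = \varphi_s g_{r_1 s}$, the images $\varphi_s([a]_s),\varphi_s([b]_s),\varphi_s([c]_s)$ are defined and distinct in $B(\tau(s))$ and satisfy $B_{\tau(s)}$ in the right order; chasing this back through Lemma~\ref{3.6} shows $L_2(\psi(a);\psi(b),\psi(c))$. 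This gives preservation in the forward direction, essentially verbatim from Lemma~\ref{3.7}.

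The main obstacle, and the place where the strong embedding hypothesis does real work, is the reverse direction: showing that $L_2(\psi(a);\psi(b),\psi(c))$ forces $L_1(a;b,c)$. An ordinary (non-strong) embedding could fail here, because the witnessing node for $\psi(a),\psi(b),\psi(c)$ in $A_2$ might lie strictly above $\tau(s)$, created by a ramification point of some larger $B$-set of $A_2$ that has no counterpart in $A_1$; this is precisely the phenomenon that the example on pp.~66--67 of \cite{Bhattacharjee} warns about. The strong embedding condition rules this out: because each $\varphi_t$ is a \emph{strong} $B$-set embedding, the centroid of any incomparable triple from $A_1$ is computed the same way in $A_2$, so the descent argument of Lemma~\ref{3.3} run inside $A_2$ cannot pass to a node above the image of the witnessing node of $A_1$. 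Concretely, I would show that the witnessing node in $A_2$ for $\psi(a),\psi(b),\psi(c)$ is exactly $\tau(s)$, using that strongness keeps the three points $B_{\tau(s)}$-related in the same order as $B_s$ relates their preimages, and that no proper successor of $\tau(s)$ in $T^{A_2}$ can witness the relation without contradicting the branch structure preserved by the $g$ maps. Once both directions are established, $\psi$ is an $\mathcal L$-embedding of $M^{A_1}$ into $M^{A_2}$, completing the proof.
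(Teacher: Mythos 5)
Your setup and forward direction are essentially the paper's own proof: the paper likewise defines $\psi(a)$ by choosing a representative of $\varphi_{r_1}(a)$ regarded as a subset of $M^{A_2}$ (formally, a member of $g_{r_2\tau(r_1)}^{-1}\varphi_{r_1}(a)$), gets injectivity of $\psi$ from injectivity of $\varphi_{r_1}$, and pushes the witnessing node $s$ of $L_1(a;b,c)$ through $\varphi_s$ and the $g$-compatibility to conclude $L_2(\psi(a);\psi(b),\psi(c))$. (Your parenthetical first calling $\varphi_{r_1}([a]_{r_1})$ a singleton is garbled, but you correct it within the same sentence.) Where you diverge is the converse. The paper's argument is shorter: given $L_2(\psi(a);\psi(b),\psi(c))$ with $a,b,c$ distinct, the final clause of Lemma~\ref{3.3} gives that $L_1$ holds for $a,b,c$ in \emph{some} order; the already-proved forward direction gives $L_2$ in that same order for the images; and since an $L$-relation arising from a tree of $B$-sets can hold for a distinct triple with only one element in the middle slot (axiom (B2) at the witnessing node), that order must agree with $(a;b,c)$ up to swapping $b$ and $c$. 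Your route---pin down the witnessing node of the image triple as exactly $\tau(s)$ and read the order back through $\varphi_s$---also works, but note two things: it silently uses the same totality clause of Lemma~\ref{3.3} (to produce the node $s$ at which the triple $a,b,c$ resolves in $A_1$, since $L_1(a;b,c)$ itself is not yet known), and the identification of the witness needs no fresh argument, because once your forward computation exhibits $[\psi(a)]_{\tau(s)}$, $[\psi(b)]_{\tau(s)}$, $[\psi(c)]_{\tau(s)}$ as defined, distinct and $B_{\tau(s)}$-comparable, the uniqueness clause of Lemma~\ref{3.3}, applied in $A_2$, already forces $\tau(s)$ to be the witnessing node.

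One misconception is worth flagging, though it does not invalidate the proof: strongness is \emph{not} where the real work happens in this lemma. The forward direction uses only that each $\varphi_s$ is an injective map preserving $B_s$ together with the $f$- and $g$-compatibility of an arboreal embedding; the converse, as just explained, rests on the totality and order-rigidity of $L$-relations (Lemma~\ref{3.3}), not on preservation of centroids. So your proposed extra argument that ``no proper successor of $\tau(s)$ can witness the relation without contradicting the branch structure'' is both vague as stated and unnecessary. The centroid-preservation content of strong embeddings, and the Bhattacharjee--Macpherson example on pages 66--67 that you cite, are really about making \emph{amalgamation} work in section 4; Lemma~\ref{3.8} itself would go through verbatim for the weaker notion of embedding.
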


\begin{proof} Suppose that we are given a strong embedding of $A_1$ into $A_2$. The embedding is 
presented by means of a tree-embedding $\tau$ of $T^{A_1}$ into $T^{A_2}$ and a family of 
strong $B$-set embeddings $\varphi_s$ from $(B(s), B_s)$ in $A_1$ to $(B(\tau(s)), B_{\tau(s)})$ in $A_2$.

The domains of $M^{A_1}$ and $M^{A_2}$ are the $B$-sets $B(r_1)$ and $B(r_2)$ at the roots $r_1$ and 
$r_2$ of $T^{A_1}$, $T^{A_2}$ respectively. For each $a \in B(r_1)$, let $\psi(a)$ be some member of 
$g_{r_2 \tau(r_1)}^{-1}\varphi_{r_1}(a)$. We observe that $\varphi_{r_1}$ maps $B(r_1)$ to $B(\tau(r_1))$, 
and since $r_2$ is the root of $T^{A_2}$, we obtain $r_2 \le \tau(r_1)$. As $g_{r_2 \tau(r_1)}$ maps a subset of 
$B(r_2)$ surjectively to $B(\tau(r_1))$, such $\psi(a)$ exists, and lies in $B(r_2)$. Since 
$\varphi_{r_1}$ is injective, so is $\psi$, and it maps $B(r_1)$ into $B(r_2)$.

It remains to verify that if $L_1$ and $L_2$ are the $L$-relations on $M^{A_1}$ and $M^{A_2}$
respectively, then $L_1(a; b, c) \Leftrightarrow L_2(\psi(a); \psi(b), \psi(c))$, where $a, b, c$ are 
distinct points of $B(r_1)$. Now if  $L_1(a; b, c)$ holds, then by Lemma \ref{3.3} there is a unique 
$s \in T^{A_1}$ such that $[a]_s$, $[b]_s$, and $[c]_s$ are defined and distinct, and 
$B_s([a]_s; [b]_s, [c]_s)$. Since $\varphi_s$ is a strong $B$-set embedding,  
$\varphi_s([a]_s)$, $\varphi_s([b]_s)$, and $\varphi_s([c]_s)$ are distinct, and 
$B_{\tau(s)}(\varphi_s([a]_s); \varphi_s([b]_s), \varphi_s([c]_s))$. By definition of `embedding', 
$\varphi_s([a]_s) = [\varphi_{r_1}(a)]_{\tau(s)}$, and by choice of $\psi$, this is also equal to 
$[\psi(a)]_{\tau(s)}$, and similarly for $b$ and $c$. Thus $[\psi(a)]_{\tau(s)}$, $[\psi(b)]_{\tau(s)}$, 
and $[\psi(c)]_{\tau(s)}$ are distinct, and $B_{\tau(s)}([\psi(a)]_{\tau(s)}; [\psi(b)]_{\tau(s)}, [\psi(c)]_{\tau(s)})$, 
so by definition of the $L$-relation on $B(r_2)$, we have $L_2(\psi(a); \psi(b), \psi(c))$. 

Conversely, suppose that $L_2(\psi(a); \psi(b), \psi(c))$. Since $\psi$ is injective, $a$, $b$, 
and $c$ are distinct. By the final part of Lemma \ref{3.3}, $L_1$ holds between $a, b,$ and $c$ 
in some order. The argument already given shows that $L_2$ holds between $\psi(a)$, $\psi(b)$, and 
$\psi(c)$ in that same order. But as $L_2(\psi(a); \psi(b), \psi(c))$, it must be the same 
for $a$, $b$, $c$, i.e. $L_1(a; b, c)$ as required (or else $L_1(a; c, b)$, which is equivalent). \end{proof}

\section{Amalgamation arguments and the existence of the generic}

Now we are able to verify the amalgamation property for finite $C$-coloured trees of $B$-sets, and 
deduce that there is a `generic' such, in the style of Fra\"iss\'e's Theorem. The desired Jordan
group will then be the group of automorphisms of this limit object. This is not however quite good 
enough, since we need to include in the limit the set on which the Jordan group acts. This will be
provided by Lemma \ref{3.8}, and indeed the relevant set will carry an $L$-relation on it, which is 
preserved by the group action. Note that in \cite{Bhattacharjee} and \cite{Bradley}, although the 
close connection between finite trees of $B$-sets and $L$-structures was explained, the Fra\"iss\'e 
part of the proof was done entirely in the terms of the $L$-structures. This had the advantage that 
as these are first order structures, a direct appeal to a version of Fra\"iss\'e's Theorem in the 
literature immediately supplied the existence of the limit. The down-side was that complicated 
machinery was required to recover the desired tree of $B$-sets from the limit as $L$-relation. Here 
we seek to minimize the difficulties by working directly with the trees of $B$-sets. Categorical generalizations of Fra\"iss\'e's Theorem were given by M. Droste and R. G\"obel in \cite{Droste2} and \cite{Droste3}. Since then, the notion that certain categories enjoying appropriate forms of the amalgamation property (among other conditions), are natural domains for idealizations of Fra\"iss\'e theory has been resonating through the literature, see \cite{Kubis}, \cite{Caramello}, \cite{Kubis2}. In all these categorical generalizations, the classical Fra\"iss\'e construction is recovered upon considering amalgamation in a category whose finite objects are (finite) structures in a given first order language, the (monic) morphisms are (model theoretic) embeddings between them, and co-limits of sequences are unions of chains of structures. As trees of $B$-sets are not themselves first order structures, amalgamating them directly requires moving outside the classical Fra\"iss\'e setting, though the limit we construct has a countable set as domain, on which its automorphism group acts.


With this in mind, we let $\mathcal C$ be the family of finite trees of $B$-sets over our ambient 
$C$-coloured structure tree $T_C$, which are the objects (in the categorical language). As embeddings (or morphisms) between members of $\mathcal C$ we take strong 
embeddings.  Since these are complicated structures, we reduce the verification of amalgamation to 
that of 1-point extensions. If $A, E \in {\mathcal C}$ and $A$ is a strong substructure of $E$, 
with the set at the root of $E$ having just one more point  than for $A$, then $E$ is said to 
be a {\em 1-point extension} of $A$. We now list the possible 1-point extensions:

In a {\em star} extension, the root $r$ of $T^E$ is strictly below the root $s$ of $T^A$, and all 
points of the $B$-set $(B(r),B_r)$ in $E$ except for $e$ are leaves, which are all attached to 
$e = f_r(s)$ (which is its single ramification point), and the function $g_{rs}$ is a bijection 
from $B(r) \setminus \{e\}$ to $B(s)$. 

The other types of 1-point-extensions are all `root extensions', meaning that the roots of $T^A$ and 
$T^E$ are equal, to $r$ say. Since we are insisting that all nodes are populated, $B(r)$  must 
in $E$ have at least 2 vertices, and by our convention on $B$-relations and graphs, must have at least 
one edge. There are four cases, depending on the valency of $e$ and of its neighbour or neighbours.

In a {\em leaf} extension, a new vertex $e$ is added as a leaf of $B_r$ joined to an existing node 
$u$ which is a leaf of $B_r$  in $A$. There is no change in the tree, since no ramification points 
are added, and any branch containing $u$ now also has $e$ included (so this is a `trivial' 
modification, involving just altering the `labels'). 

In a {\em dyadic} extension, an edge with endpoints $u$, $v$ in $B(r)$ is replaced by a pair of edges 
$ue$ and $ev$ where $e$ is a new vertex (and the edge $uv$ is deleted). Again the tree is not changed,
but just the `labelling'.

In a {\em ternary} extension, $e$ is attached to a vertex $u$ of $B(r)$ which in $A$ is dyadic, so that 
$u$ becomes a ramification point. The tree therefore has one node $f_r^{-1}(u)$ added, at which there is
a 3-point linear $B$-relation.

In a {\em ramification} extension, $e$ is attached to a point $u$ of $B(r)$, which in $A$ is a ramification 
point. There are no new ramification points at the root, but it is possible for ramification points to be 
added higher up.

\begin{lemma} \label{4.1} Any 1-point extension $E$ of $A$ for non-empty members $A$ and $E$ of $\mathcal C$ 
is a star extension, or a root extension of one of the stated types. \end{lemma}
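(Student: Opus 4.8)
The plan is to analyze the combinatorial structure of a one-point extension by asking where the single new point $e$ sits relative to the existing structure, and to show that the given list of types is exhaustive. Since $E$ is a one-point extension of $A$, the root $B$-set $B(r_E)$ of $E$ has exactly one more vertex than that of $A$, and this extra vertex is the new point $e$. The fundamental dichotomy is whether the roots of $T^A$ and $T^E$ coincide. If the root $r_E$ of $T^E$ lies strictly below the root $r_A$ of $T^A$, then since $A$ is a strong substructure of $E$ and the tree $T^A$ must be situated above $r_E$, the only way to add a single point at the root while respecting the $g$-maps is to have $r_A$ be a successor of (or strictly above) $r_E$; I would argue that this forces the star configuration, in which the new point $e$ becomes the unique ramification point $f_{r_E}(r_A)$ at the new root and $g_{r_E r_A}$ restricts to a bijection between the leaves and $B(r_A)$. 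This is precisely the star extension.

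In the remaining case the roots are equal, call it $r$, so the extension is a root extension and we reason purely about the $B$-set $B(r)$, viewed as a combinatorial tree (using the correspondence between finite $B$-sets of positive type and combinatorial trees recalled in Section~2). The new vertex $e$ is attached to the existing tree on $B(r)$ of $A$, and I would classify the possibilities according to the valency of $e$ in $E$ and the valency of its neighbour(s) in $A$. In a combinatorial tree a new leaf $e$ is joined to exactly one existing vertex $u$ (otherwise a circuit would be created, contradicting the tree property), or else $e$ is inserted in the interior of an existing edge. The latter gives the dyadic extension, where an edge $uv$ is subdivided. If instead $e$ is a leaf attached to a single vertex $u$, the effect on the $B$-set's ramification structure — and hence on the populated subtree $T^E$ — depends entirely on the valency of $u$ in $A$.

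I would then split the leaf-attachment case into the three remaining subcases by the valency of $u$ in $A$: if $u$ was already a leaf (valency $1$), adding $e$ as a new leaf beside it creates no new ramification point and merely relabels the branch containing $u$, giving the leaf extension; if $u$ was dyadic (valency $2$), then attaching $e$ raises $u$ to valency $3$, turning it into a ramification point, so a new node $f_r^{-1}(u)$ must be adjoined to $T^E$ carrying a three-point linear $B$-relation, which is the ternary extension; and if $u$ was already a ramification point (valency $\ge 3$), then $u$ remains a ramification point with one higher valency, adding no ramification point at the root but possibly forcing changes higher up the tree, which is the ramification extension. The main obstacle I anticipate is the bookkeeping in the star case: one must verify carefully that the strong-substructure hypothesis, together with the requirement that every node of $T^E$ is populated and ramifies, genuinely forces $r_A$ to be an immediate successor of $r_E$ and that $g_{r_E r_A}$ has the claimed bijective behaviour, rather than permitting some more complicated attachment of $T^A$ below $r_A$. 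Handling the interaction between the single added point at the $L$-set level and the possible addition of an entire chain of new nodes to the structure tree is the delicate point; the root-extension cases are comparatively routine once phrased in the graph-theoretic language, since there the structure tree changes only minimally.
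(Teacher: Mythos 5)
Your overall architecture matches the paper's: split on whether the roots of $T^E$ and $T^A$ coincide, then classify root extensions by the valency of $e$ and of its neighbour(s); your final sorting into leaf/dyadic/ternary/ramification types is correct. But both of the substantive steps are either missing or rest on invalid reasoning. The root-extension case is not ``comparatively routine'', and your circuit argument for why $e$ must have valency $1$ or $2$ does not work. If $e$ is adjacent in $B^E_r$ to three vertices $x,y,z$ of $B^A(r)$, with the paths between them re-routed through $e$, the result is still a perfectly good combinatorial tree on $B^E(r)$: no circuit is created in $E$, so ``the tree property'' is not contradicted. (Your own dyadic case already exposes the flaw: inserting $e$ into an edge $uv$ joins $e$ to \emph{two} existing vertices without creating a circuit, because the adjacency $uv$ is destroyed; joining $e$ to three vertices destroys adjacencies in the same way.) What actually rules out valency $\ge 3$ is the strong-substructure hypothesis: $x,y,z$ would form an incomparable triple of $B^A_r$ whose centroid, computed in $B^E_r$, is $e \notin B^A(r)$, whereas strong substructure says precisely that centroids of incomparable triples of points of $B^A(r)$ agree in the two structures and hence lie in $B^A(r)$. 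This centroid argument is the crux of the lemma --- it is exactly the point where the paper's notion of strong substructure replaces the quaternary relation of Bhattacharjee--Macpherson --- and your proposal never invokes the strong-substructure hypothesis in this case at all.

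In the star case you explicitly defer the key verification (``I would argue that this forces the star configuration'') and correctly identify it as the main obstacle, but then give no argument, so nothing is proved there either. The paper settles it with a short counting argument you could have run: with $r = r_E < s = r_A$ and $e = f_r(s)$, the map $g_{rs}$ sends a subset of $B^E(r)\setminus\{e\}$ onto $B^E(s) \supseteq B^A(s)$, so $|B^A(s)| \le |B^E(s)| \le |B^E(r)| - 1$; since a one-point extension has $|B^E(r)| = |B^A(s)| + 1$, equality holds throughout, whence $B^E(s) = B^A(s)$, there is no node of $T^E$ strictly between $r$ and $s$ (such a node would force $|B^E(s)| \le |B^E(r)| - 2$), and $g_{rs}$ is a bijection from $B^E(r)\setminus\{e\}$ onto $B^A(s)$. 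Since $g_{rs}$ is constant on each branch of $B^E_r$ at $e$, injectivity forces every branch at $e$ to be a singleton, i.e. $B^E_r$ is a star centred at $e$ and the extension is a star extension. As it stands, your proposal reproduces the case division of the proof but establishes neither of its two non-trivial cases.
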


\begin{proof} Let $r$ and $s$ be the roots of $E$ and $A$ respectively. Since $E$ is an extension of $A$,
$T^A \subseteq T^E$, so $r \le s$. In the first case, $r < s$, in which case we let $e = f_r(s)$ in $E$. 
Then $g_{rs}$ maps $B^E(r) \setminus \{e\}$ surjectively to $B^E(s)$. Thus 
$|B^A(s)| \le |B^E(s)| \le |B^E(r)| - 1$. Since $E$ is a 1-point extension of $A$, $g_{rs}$ is a bijection 
from $B^E(r) \setminus \{e\}$ to $B^E(s) = B^A(s)$, and it follows that $E$ is a star extension of $A$.

Otherwise, $r = s$, and we let $B^E(r) = B^A(r) \cup \{e\}$. As in \cite{Bhattacharjee} we show that $e$ 
has valency 1 or 2 in the $B$-set $(B^E(r), B_r^E)$, though we replace the appeal to a quaternary relation 
to considerations of strong substructures. Note that $e$ has at least one neighbour, as otherwise it would
be isolated, and $A$ empty. Suppose for a contradiction that $e$ has at least 3 neighbours $x, y, z$ say in 
$B_r^E$. Then $e$ is the centroid of $\{x, y, z\}$ in $B^E_r$, so as $B^A_r$ is a strong substructure of
$B^E_r$, $e$ must also lie in $B^A(r)$, contrary to its choice. 

If the valency of $e$ is 1 then the extension is a root extension which is a leaf, ternary, or 
ramification extension of $A$ depending whether the valency of the point it is attached to in 
$B_r^A$ is 1, 2, or greater than 2. If its valency is 2, then $E$ is a dyadic extension. (See 
the proof of Lemma \ref{4.3} for more details.)         \end{proof}


\begin{lemma} \label{4.2} 1-point extensions can be amalgamated in $\mathcal C$. \end{lemma}

\begin{proof} Note that throughout this proof, `extension' means `strong arboreal extension'. Let $E_1$ and $E_2$ 
be 1-point extensions of $A$, and let the extra points be $e_1$ and $e_2$ respectively. We have to show how to
amalgamate $E_1$ and $E_2$ over $A$. Note that when we say that various nodes are handled `trivially', we mean that 
what happens there and above in an extension is `essentially' the same as in the original structure, and the labels 
only differ by addition of new vertices in the same class as an existing one, so there is no change up to isomorphism.
One checks that even in cases where $T$ has maximal elements, the amalgamation can be performed (the finite trees may
get `fatter', but not `taller'). 


If the map fixing $A$ pointwise and taking $e_1$ to $e_2$ is an isomorphism, then we can identify $e_1$ 
and $e_2$ to obtain an amalgam trivially, so from now on we assume that this is not an isomorphism.

We need to consider many cases for the possible types of extension for $E_1$ and $E_2$, though most of 
them are treated in a similar way. We first note that they cannot both be star extensions having roots with 
the same colour, since these would be isomorphic, contrary to assumption. If they are star extensions having
roots $r_1$, $r_2$ with different colours $c_1$ and $c_2$, with for instance $c_1 < c_2$, then we form an
extension $E$ of $E_2$ having an extra node $r_1$ below $r_2$, with a star at $r_1$ as for $E_1$ but with 
$e_2$ added as an extra leaf.  

Next suppose that $E_1$ is a star extension and $E_2$ is a root extension.  Let $r$ be the root of $T^{E_1}$. 
Then $s = f_r^{-1}(e_1)$ is the root of $T^A$, and hence also the root of $T^{E_2}$. An amalgam $E$ of $E_1$ and 
$E_2$ over $A$ having root $r$ is obtained from $E_2$ by adding a star node below it at $r$, where the $B$-set 
is as for $E_1$, with the addition of one extra leaf $e_2$ attached to $e_1$. One checks that the obvious 
embeddings of $E_1$ and $E_2$ into $E$ are strong.

From now on we suppose that $E_1$ and $E_2$ are non-isomorphic root extensions. Let $r$ be the common 
root of $T^A$, $T^{E_1}$, and $T^{E_2}$, and let $u, v$ be the unique vertices of $B(r)$ in $A$ that $e_1, e_2$ 
are joined to, respectively, or in the dyadic case, $u_1, u_2$, or $v_1, v_2$ as the case may be. The most 
straightforward case is that in which all these points are distinct. Then we can amalgamate by taking the
union of $E_1$ and $E_2$. More precisely, at the root, $e_1$ and $e_2$ are added correctly to $B(r)$, and $T^E$
is taken to be the union of $T^{E_1}$ and $T^{E_2}$, which overlap just in $T^A$, so there is no conflict.

In the remaining cases, $E_1$ and $E_2$ are non-isomorphic, but $u = v$ (or $u_1 = v_1, u_2 = v_2$ in the dyadic 
case). We note that in all except the case of a ternary or ramification extension, $E_1$ and $E_2$ are now actually 
isomorphic over $A$. For a leaf extension, $e_1$ is a leaf of $B^{E_1}(r)$ and $u$ is dyadic in $B^{E_1}(r)$, and 
so no ramification points are added, so $T^{E_1} = T^A$, and similarly for $E_2$. If the extension is dyadic, so 
that $u_1 u_2$ is a edge in $B^A(r)$, in $E_1$ $u_1 e_1$ and $e_1 u_2$ are edges, so that $e_1$ is dyadic, and the 
ramification orders of $u_1$ and $u_2$ are unchanged. Hence once again the trees $T^A$ and $T^{E_1}$ are equal,
and if $u_1$ is a ramification point, the branch at $u_1$ containing $u_2$ is replaced by the branch containing
$e_1$, and similarly for $u_2$. Exactly the same applies in $E_2$, and hence $E_1$ and $E_2$ are isomorphic over
$A$ by the map taking $e_1$ to $e_2$. 

For the ternary case, $u \in B^A(r)$ is dyadic, with neighbours $x$ and $y$ say, and $e_1$ is a leaf adjacent
to $u$ in $B^{E_1}_r$ and $e_2$ is a leaf adjacent to $u$ in $B^{E_2}_r$. Thus $u$ is a ramification point
in both $B^{E_1}_r$ and $B^{E_2}_r$, and hence $s_1 = f_r^{-1}(u)$ evaluated in $E_1$ and $s_2 = f_r^{-1}(u)$ 
evaluated in $E_2$ both exist, and each of $B^{E_1}_{s_1}$ and $B^{E_2}_{s_2}$ is a 3-element $B$-set, hence a
linear $B$-set. If $s_1$ and $s_2$ have the same colour $c$, then we can most easily amalgamate by allowing  $s= s_1 = s_2$ in $E$ and finding a linear 
$B$-relation at $s$ which extends those on $B^{E_1}_{s_1}$ and $B^{E_1}_{s_2}$. Otherwise, assume 
without loss of generality that $c_1 < c_2$ where $c_1$ and $c_2$ are the colours of $s_1$ and $s_2$. We form 
an amalgam $E$ such that $T^E = T^{E_1} \cup T^{E_2} = T^A \cup \{s_1, s_2\}$. To make things work, we require 
one extra `auxiliary' point $e$ which we add to the root $B$-set. So we let $B^E(r) = B^A(r) \cup \{e_1, e_2, e\}$.
We let $B^E_{s_2} = B^{E_2}_{s_2}$ be unchanged (strictly speaking, it is obtained from $B^{E_2}_{s_2}$
by a `trivial' modification). In $B^E_r$, $e_1$, $e_2$, and $e$ are all leaves attached to $u$, so that
now $u$ has ramification order 5, and there are 5 branches at $u$ which give rise to 5 members of $B^E(s_1)$.
Now the relation between $[e_1]_{s_1}$, $[x]_{s_1}$, and $[y]_{s_1}$ in $B^E_{s_1}$ is already known, since 
this has to be an extension of $B^{E_1}_{s_1}$, which must be a linear $B$-relation on 3 points. We decree that 
$B^E_{s_1}$ will have just one ramification point $[e]_{s_1}$, with valency 3. The key point is that 
$[x]_{s_1}$, $[y]_{s_1}$, and $[e_2]_{s_1}$ lie in distinct branches. Which branch $[e_1]_{s_1}$ lies in depends 
on how it is related to $x$ and $y$. If $B^{E_1}_{s_1}([x]_{s_1}; [e_1]_{s_1}, [y]_{s_1})$ or 
$B^{E_1}_{s_1}([e_1]_{s_1}; [x]_{s_1}, [y]_{s_1})$, then $[e_1]_{s_1}$ is in the same branch as $[x]_{s_1}$, 
and if $B^{E_1}_{s_1}([y]_{s_1}; [e_1]_{s_1}, [x]_{s_1})$ it is in the same branch as $[y]_{s_1}$. We now 
let $f^{-1}_{s_1}([e]_{s_1}) = s_2$ in $E$, and then $[x]_{s_1}$, $[y]_{s_1}$, and $[e_2]_{s_1}$ lie in 
distinct branches of $[e]_{s_1}$ in $B^E_{s_1}$, and hence $[x]_{s_2}$, $[y]_{s_2}$, and $[e_2]_{s_2}$
are distinct members of $B^E(s_2)$ (as is required). It can be easily checked that the embeddings of $E_1$ 
and $E_2$ into $E$ thus defined are strong.

Finally we deal with the case in which $E_1$ and $E_2$ are non-isomorphic ramification extensions, and $e_1$ and 
$e_2$ are both attached to $u$, which has ramification order at least 3 in $B^A_r$, where $r$ is the root of $T^A$
(hence also of $T^{E_1}$ and $T^{E_2}$). Let $s, s_1, s_2$ be $f_r^{-1}(u)$ evaluated in $A$, $E_1$, and $E_2$
respectively. Since $T^A \subseteq T^{E_1}, T^{E_2}$ we have $s \in T^{E_1}, T^{E_2}$. 

Let $x_1, x_2, \ldots, x_N$ be the neighbours of $u$ in $B^A_r$. Since $A$ is a substructure of 
$E_1$,$[x_1]_s, \ldots, [x_N]_s$ are distinct members of $B^{E_1}(s)$, from which it follows that $s_1 \le s$. Also
$|B^A(s)| = N$ and $|B^{E_1}(s_1)| = N + 1$, so that either $s_1 = s$, or $s$ is a successor of $s_1$ in $T^{E_1}$;
furthermore, in this latter case, $B^{E_1}_{s_1}$ is a star with centre $[e_1]_{s_1} = f_{s_1}(s)$. Similarly for $E_2$. We now consider cases in turn, depending on the relationship between $s_1, s_2$, and $s$.

\noindent{\bf Case 1}: $s_1 = s_2 = s$. The trees of $B$-sets obtained from $E_1$ and $E_2$ over $A$ obtained by
removing the root $r$ are 1-point (root) extensions, and $|B^A(s) <  |B^A(r)|$, so we may amalgamate these 
inductively. The root $B$-set is then added to the amalgam, with both $e_1$ and $e_2$ leaves attached to $u$.

\noindent{\bf Case 2}: $s_1 < s$ and $s_2 < s$. Then since $E_1$ and $E_2$ have stars at $s_1$, $s_2$ respectively, 
we see that since $E_1$ and $E_2$ are assumed non-isomorphic over $A$, the colours $c_1$ of $s_1$ and $c_2$ of
$s_2$ are unequal. Suppose $c_1 < c_2$. We take $T^E = T^A \cup \{s_1, s_2\}$. At $r$, $E$ has $e_1$ and $e_2$
both attached as leaves to $u$. At $s_1$, starting with $B^{E_1}_{s_1}$, we adjoin $[e_2]_{s_1}$ as an extra leaf  
attached to $[e_1]_{s_1}$, and $B^E_{s_2} = B^{E_2}_{s_2}$.

\noindent{\bf Case 3}: $s_1 < s = s_2$ (and similarly $s_2 < s = s_1$). In this case, if we remove the root $r$, then 
$E_1$ is a star extension of $A$ since $B^{E_1}_{s_1}$ is a star centred at $[e_1]_{s_1}$, and $E_2$ is a root
extension of $A$. Since $|B^A(s)| <  |B^A(r)|$, we may again appeal to the induction hypothesis to amalgamate, and then
restore the root $r$ again obtained from $B^A_r$ adding $e_1$ and $e_2$ as leaves attached to $u$. For clarity 
we make this more explicit. We may take the tree $T^E$ to be $T^{E_2} \cup \{s_1\}$. At $r$, $E$ has the $B$-set
obtained from $B^A_r$ by adjoining $e_1$ and $e_2$ as distinct leaves attached to $u$. At $s_1$ $E$ has the
$B$-set obtained from $B^{E_1}_{s_1}$ by adjoining $[e_2]_{s_1}$ as an extra leaf attached to $[e_1]_{s_1}$.
Otherwise $E$ is the same as $E_2$.    \end{proof}

\vspace{.1in}

To show that the family of finite trees of $B$-sets over a given ambient tree $T$ has the amalgamation 
property, it remains to show that any extension can be realized by a sequence of 1-point extensions, 
and that consequently, any amalgamation can be performed by composing amalgamations of the 1-point 
extensions. These steps are carried out again using \cite{Bhattacharjee} as a guide, modified 
using our current approach. 

\begin{lemma} \label{4.3} If $A$ and $E$ are finite trees of $B$-sets, such that $A$ is a strong substructure 
of $E$, then there is a finite sequence $A= A_0, A_1, \ldots, A_{n-1}, A_n = E$ such that each $A_i$ is a 
strong substructure of $A_{i+1}$ ,and $A_{i+1}$ is a 1-point extension of $A_i$.  \end{lemma}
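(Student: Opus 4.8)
The plan is to argue by induction on the number $n = |M^E| - |M^A|$ of extra root-points carried by $E$ over $A$, recalling that a one-point extension is by definition one that enlarges the root $B$-set by exactly one vertex. Since $A$ is a strong substructure of $E$, its root set embeds into a quotient of the root set of $E$ under the relevant $g$-map, so $|M^A| \le |M^E|$, with equality exactly when $A = E$: if $n = 0$ the roots must coincide (a strictly lower root $r_E < r_A$ in $T^E$ would force $|B^E(r_E)| > |B^E(r_A)| \ge |B^A(r_A)|$ via the surjectivity and nonempty kernel of $g^E_{r_E r_A}$), the root $B$-sets are then equal, and as the higher $B$-sets and the $f,g$-data are determined by the root $B$-set together with the embedding, $A = E$. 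This settles the base case. For the inductive step it suffices to produce a single one-point extension $A_1$ of $A$ with $A \le A_1 \le E$ (both strong); then $|M^E| - |M^{A_1}| = n-1$, and the induction hypothesis applied to $A_1 \le E$ supplies the remaining chain.

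To build $A_1$ I split on whether the roots $r_A$ and $r_E$ agree. If $r_E < r_A$, let $t$ be the immediate predecessor of $r_A$ in $T^E$ and perform a star extension with new root $t$: take $e = f^E_t(r_A)$ as centre, and as leaves the $E$-neighbours of $e$ in those branches that $g^E_{t r_A}$ sends onto the points of $B^A(r_A)$. This star is a centroid-closed (hence strong) sub-$B$-set of $B^E(t)$, its $f_t$ and $g_{t r_A}$ agree with those of $E$ by construction, and nothing changes above $t$; so $A_1$ is a strong star extension of $A$ sitting strongly inside $E$, with root dropped one level towards $r_E$. If instead $r_E = r_A = r$, then $B^A(r)$ is a proper centroid-closed subset of the finite tree $B^E(r)$, and I add a single $e \in B^E(r) \setminus B^A(r)$ chosen so that $B^A(r) \cup \{e\}$ is again centroid-closed.

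The existence of such an $e$ is the combinatorial heart. Viewing $B^E(r)$ as a tree, the convex hull of $B^A(r)$ has all its branch vertices (being centroids) already in $B^A(r)$, so $B^E(r) \setminus B^A(r)$ contains either a degree-two hull vertex or a vertex adjacent to the hull. Adding a degree-two hull vertex subdivides an edge (a dyadic extension); adding a hull-adjacent vertex as a pendant at $w \in B^A(r)$ is a leaf, ternary, or ramification extension according as $w$ has valency $1$, $2$, or $\ge 3$ — exactly the four root types catalogued before Lemma \ref{4.1}. In each case a short check confirms that medians of triples involving $e$ fall back into $B^A(r) \cup \{e\}$, so centroid-closedness persists. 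I then define $A_1$ to be the sub-tree-of-$B$-sets of $E$ induced on the root point set $M^A \cup \{e\}$: any new ramification nodes, their $B$-sets, and all $f$- and $g$-data are inherited by restriction from $E$, which guarantees both that $A_1$ is a bona fide tree of $B$-sets and that the inclusion $A_1 \hookrightarrow E$ is strong.

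The main obstacle is not locating the point to add but verifying that $A_1$ is a strong substructure at \emph{every} node, not merely at the root. This is routine for leaf and dyadic extensions, where the structure tree is unchanged and only labels move, but it needs care in the ternary and ramification cases, where genuinely new nodes appear and one must confirm that the induced $B$-sets there are centroid-closed in their $E$-counterparts and that the inherited $f_t$, $g_{st}$ satisfy the coherence identities $\varphi_t f_t = f_{\tau(t)} \tau$ and $g_{\tau(s)\tau(t)} \varphi_s = \varphi_t g_{st}$; the star case carries the analogous subtlety at the relocated root. Since each constructed $A_1$ strictly increases $|M^\bullet|$ while remaining a strong substructure of $E$ (so $|M^{A_1}| \le |M^E|$), the process halts after exactly $n$ steps at $A_n = E$.
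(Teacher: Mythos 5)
Your overall strategy coincides with the paper's: induct, and at each step manufacture a single strong one-point extension of $A$ sitting strongly inside $E$, splitting into a star case (roots differ) and the four root cases located by the hull/centroid analysis. The base case, the star case, and the existence of a point $e$ with $B^A(r)\cup\{e\}$ centroid-closed are all correct. The genuine gap is the closing claim of your third paragraph: that for \emph{any} such $e$, the structure ``induced on $M^A\cup\{e\}$'' is automatically a tree of $B$-sets strongly embedded in $E$. In the ramification case this is false, and not by an oversight in verification: a root-centroid-closed choice of $e$ may admit \emph{no} strong one-point extension inside $E$ at all. Concretely, let $B^E(r)$ be a star with centre $w$ and leaves $a_1,a_2,a_3,e,z$; let $T^E$ be the chain $r<t<t'$; let $B^E(t)$ have vertices $A_i=[a_i]_t$, $P=[e]_t$, $Z=[z]_t$ with tree edges $A_3A_1$, $A_1Z$, $ZA_2$, $ZP$; and let $B^E(t')$ consist of the three branches of $B^E(t)$ at $Z$ with a linear relation. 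Take $T^A=\{r<t\}$, $B^A(r)=\{w,a_1,a_2,a_3\}$ (star at $w$), and $B^A(t)=\{A_1,A_2,A_3\}$ (linear, in the order $A_3,A_1,A_2$). Then $A$ is a strong substructure of $E$, and \emph{both} $e$ and $z$ are hull-adjacent points attached to the ramification point $w$ which keep the root centroid-closed, so your procedure may pick $e$. But since $t$ is a successor of $r$ in $T^E$, the class $[e]_t=P$ exists, and strongness of any extension $A_1\ni e$ forces its $B$-set at $t$ to be the restriction of $B^E_t$ to $\{A_1,A_2,A_3,P\}$, in which $A_1,A_2,P$ are pairwise incomparable with centroid $Z$ outside the set: this is not even a $B$-set of positive type. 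Hence no strong one-point extension of $A$ inside $E$ adjoins $e$; only $z$ can be added first, and $e$ only afterwards.

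What is missing is exactly the paper's key device for this case: the choice of which root point to add \emph{cannot be made at the root}, because admissibility is dictated by the levels above. The paper applies the induction hypothesis to the subtree of $T^E$ above the node $f_r^{-1}(u_x)$ (which has fewer tree nodes), obtains a strong one-point extension \emph{up there} adjoining some specific new vertex $y_x$ (in the example above, $Z$ rather than $P$), and only then adds the root point $x$ corresponding to that $y_x$ (here $z$, not $e$); the paper explicitly notes that one cannot know in advance which $y_x$ the induction hypothesis will supply, so the root choice must wait for it. Your induction on $|M^E|-|M^A|$ has no such mechanism, since every recursive call it makes is again a root-level problem for the same pair of trees. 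A secondary weakness of the ``inherited by restriction'' recipe: the new tree node created in the ternary and ramification cases need not be a node at the successor level of $T^A$, nor need it sit above the existing nodes; when $T^E$ has nodes strictly between $r$ and a node of $T^A$, the correct one-point extension must insert such an intermediate node \emph{below} an existing node of $T^A$ and place a star there, so even deciding which nodes of $T^E$ your induced structure populates requires the same top-down information that the recursion provides.
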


\begin{proof} We use induction on the number of nodes of $T^E$. If this is 1, then the single node carries a
linear betweenness relation, and the result is immediate. 

Assume that $A < E$. We find a (strong) 1-point extension $A'$ of $A$ which is a strong substructure of $E$, 
and then we can repeat. Let $r$ and $s$ be the roots of $T^A$ and $T^E$ respectively. Then $s \le r$.

In the first case, $s < r$. Let $e = f_s(r)$. For each $a \in B^A(r)$, let $\psi(a) \in g_{sr}^{-1}(a)$ be at distance
1 from $e$. Thus $\psi(a) \in B^E(s)$ and $[\psi(a)]_r = a$. Let $t \in T^E$ be the successor of $s$ which is
below $r$. Since all $[\psi(a)]_r$ are distinct, so are $[\psi(a)]_t$, so the $\psi(a)$ must lie in distinct branches
at $e$ in $B^E_s$. Let $A'$ be the star extension of $A$ contained in $E$ given as follows. We obtain $T^{A'}$
from $T^A$ by adjoining $s$ as root, and we let $B^{A'}(s) = \{e\} \cup \{\psi(a): a \in B^A(r)\}$ and take for 
$B^{A'}_s$ the star centred at $e$. One checks that $A'$ is a strong 1-point star extension of $A$ which is a strong 
substructure of $E$.

Now suppose that $s = r$. Then $B_r^A$ is a strong substructure of the positive $B$-set $B^E_r$, and since $A < E$, 
this is a proper substructure. There must be at least one $x \in B^E(r) \setminus B^A(r)$ at distance 1 to to a 
member of $B^A(r)$, and we need to consider the set $X$ of all such. If $x \in X$, let $Y_x$ be the set of members 
of $B^A(r)$ at distance 1 from $x$. Then $|Y_x| \le 2$, since if there are three members of $B^A(r)$ at distance 1 
from $x$, their centroid would have to be $x$, contrary to $A$ a strong substructure of $E$. We write $Y_x$ as
$\{u_x\}$ or $\{u_x, v_x\}$ depending on whether $|Y_x| = 1$ or 2. If some $|Y_x| = 1$ and $u_x$ is a leaf of 
$B^A_r$, then we can let $T^{A'} = T^A$, and replace the $B$-set at the root by the restriction of $B^E_r$ to 
$B^A(r) \cup \{x\}$. Since there are no new ramification points, this gives a strong 1-point extension of $A$ 
which is a strong substructure of $E$. The only change is `trivial', in that any branch containing $u_x$ has $x$ 
added to it. A similar argument applies if some $|Y_x| = 2$ (the dyadic case), where again $T^{A'} = T^A$, and the 
only change is `trivial' in the labelling, as if for instance $u_x$ is a ramification point, then the branch 
containing $v_x$ now has $x$ added to it, and similarly for $v_x$ a ramification point. If $|Y_x| = 1$ and $u_x$
is dyadic in $B^A(r)$, then $u_x$ becomes a ramification point in $A'$. So we add a new node $t$ to the tree $T^A$, 
where $t = f_r^{-1}(u_x)$. Since $u_x$ now has valency 3, a 3-element linear $B$-set is induced on $B^{A'}(t)$ from 
$B^E_t$.

Now we suppose that for every $x \in X$, $|Y_x| = 1$ and $u_x$ is a ramification point of $B^A_r$. We don't need to 
alter this part of the tree, but $u_x$ has acquired an extra branch, and so $B^{A'}(t)$ has one more vertex $y_x$ 
than $B^A(t)$, where $t = t_x = f_r^{-1}(u_x)$, which is the branch at $u_x$ containing $x$. Then 
$y_x \in B^E(t) \setminus B^A(t)$, and every member of $B^E(t) \setminus B^A(t)$ arises as $y_x$ from some 
$x \in X$ such that $t_x = t$. We now apply the induction hypothesis to the subtree of $T^E$ having root $t$, which 
has fewer nodes that $T^E$. We deduce that there is a strong 1-point extension of the tree of $B$-sets on $A$ 
having root at $t$ which is a strong subtree of that on $E$, obtained by adjoining one of the $y_x$s for which
$t_x = t$. We adjoin this $x$ to $B^A(r)$ to form the $B$-set $B^{A'}(r)$ at the root induced from $B^E_r$, 
completing the definition of $A'$, which is a strong 1-point extension of $A$, and is a strong substructure of $E$.
(The complication here arises because we don't know in advance which $y_x$ will be provided by the induction 
hypothesis, so we have to wait for this before making our final choice of $x$.)
          
We can now repeat this finitely many times to give the desired sequence.   \end{proof}

\begin{corollary} \label{4.4} The family of finite trees of $B$-sets over the ambient tree $T$ has the 
amalgamation property (with respect to strong embeddings). \end{corollary}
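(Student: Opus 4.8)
The plan is to derive the general amalgamation property from the two preceding results by the standard Fra\"iss\'e-style step-by-step argument: Lemma~\ref{4.3} breaks an arbitrary strong embedding into a chain of one-point extensions, and Lemma~\ref{4.2} amalgamates at the level of single points. Suppose then that $A$ is a strong substructure of two finite trees of $B$-sets $B$ and $C$ over the ambient tree $T_C$; I seek a finite tree of $B$-sets $D$ with strong embeddings $B \hookrightarrow D$ and $C \hookrightarrow D$ agreeing on $A$. Applying Lemma~\ref{4.3} to $A \le B$ produces a chain $A = A_0 \le A_1 \le \cdots \le A_n = B$ in which each $A_i$ is a strong substructure of $A_{i+1}$ and $A_{i+1}$ is a one-point extension of $A_i$.

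I would then build the amalgam incrementally: by induction on $i$, construct strong embeddings exhibiting a finite tree of $B$-sets $D_i$ as an amalgam of $A_i$ and $C$ over $A$, with compatible strong embeddings $D_{i-1} \hookrightarrow D_i$, starting from $D_0 = C$. The whole content lies in the step from $D_{i-1}$ to $D_i$. Viewing $A_{i-1}$ as a strong substructure both of $D_{i-1}$ (via the embedding already built) and of the one-point extension $A_i$, I must amalgamate the one-point extension $A_{i-1} \le A_i$ against the (in general many-point) extension $A_{i-1} \le D_{i-1}$, over $A_{i-1}$. Granting this, the composites $C = D_0 \hookrightarrow \cdots \hookrightarrow D_n$ and $B = A_n \hookrightarrow D_n$ agree on $A$, so $D = D_n$ is the desired amalgam; verifying that each composite is a genuinely \emph{strong} arboreal embedding and that the two maps agree on $A$ is then routine bookkeeping, via the centroid characterization of strong substructures.

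The step I expect to be the main obstacle is exactly this amalgamation of a single one-point extension against an arbitrary finite tree of $B$-sets, because Lemma~\ref{4.2} as stated amalgamates only \emph{two} one-point extensions of a common base, and the passage to an arbitrary second argument is not purely formal. The difficulty is caused by the auxiliary point introduced in the ternary and ramification cases of Lemma~\ref{4.2} with differently coloured nodes: there the amalgam of two one-point extensions is a two-point (not one-point) extension of each side, so one cannot simply iterate Lemma~\ref{4.2} one point at a time and expect the obvious size measure to decrease. I would therefore prove the required one-point-against-arbitrary amalgamation as a separate statement, by induction on the size of the root $B$-set of the arbitrary side, reducing each case to Lemma~\ref{4.2} and, in the ramification case, re-running the internal induction used in its proof to absorb the auxiliary points higher up the tree. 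With that one-point-against-arbitrary amalgamation established, the incremental construction above yields the amalgamation property.
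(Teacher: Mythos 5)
Your proposal is correct and follows essentially the same route as the paper, whose entire proof of Corollary \ref{4.4} is a one-line appeal to Lemmas \ref{4.2} and \ref{4.3} ``as in \cite{Bhattacharjee} and \cite{Bradley}'': decompose one side into one-point extensions via Lemma \ref{4.3} and amalgamate step by step via Lemma \ref{4.2}. The auxiliary-point subtlety you flag (the amalgam of two one-point extensions need not be a one-point extension of either side, so naive iteration of Lemma \ref{4.2} lacks a decreasing measure) is genuine but is glossed over by the paper, and your proposed separate one-point-against-arbitrary lemma, proved by re-running the internal induction in the proof of Lemma \ref{4.2}, is precisely the kind of repair that the cited references carry out.
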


\begin{proof} As in \cite{Bhattacharjee}, this follows from Lemmas \ref{4.2} and \ref{4.3}. 
There is a slight additional complication, in that unlike in \cite{Bhattacharjee}, in the case of amalgamating two 1-point extensions $E_1$ and $E_2$ over a common dyadic vertex of $A$ specifiying different colours $c_1$ and $c_2$, we had to adjoin an auxiliary point $e$ to make the argument go through. This means that, in such cases, $E$ is a 2-point extension of each of $E_1$ and $E_2$, hence a composition of two
1-point extensions. However this problem does not persist, since a dyadic vertex always becomes a ramification point after a ternary extension at that vertex, so any finite sequence of 1-point extensions (as appearing in Lemma \ref{4.3}) involves at most one ternary extension at any given vertex. Hence we can nevertheless carry out the argument as in \cite{Bhattacharjee} at the cost of some extra steps.
\end{proof}

\begin{lemma} \label{4.5} The family of finite trees of $B$-sets over the ambient tree $T_C$ has the joint 
embedding property.  \end{lemma}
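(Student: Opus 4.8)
The plan is to deduce the joint embedding property from the amalgamation property already established in Corollary \ref{4.4}. Amalgamation allows us to fuse two structures over a common strong substructure, so to obtain joint embedding for arbitrary $A_1, A_2 \in \mathcal{C}$ it suffices to exhibit a single finite tree of $B$-sets $S$ together with strong embeddings of $S$ into suitable strong extensions of both $A_1$ and $A_2$, and then amalgamate over $S$.

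The difficulty is that $A_1$ and $A_2$ need share no structure at all: their roots $r_1, r_2$ may carry $B$-sets at entirely different colours, so there is \emph{a priori} no common colour at which to place $S$. I would remove this obstacle by first pushing both structures down to a common colour by means of star extensions. Since the colour chain $C$ has no least element and $F(r_1), F(r_2)$ are comparable in $C$, choose $c \in C$ strictly below both; by the construction of $T_C$ there is then a point $m_i < r_i$ of colour $c$ for each $i$. A star extension is available only when the root $B$-set has at least three points, so that the star centre $e = f_r(s)$ is genuinely a ramification point; if necessary I would therefore first enlarge each $A_i$ by finitely many strong leaf extensions to ensure $|B(r_i)| \ge 3$. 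These are strong $1$-point extensions, and composites of strong embeddings are strong, so the enlarged $\hat A_i$ still contain $A_i$ as strong substructures. I would then form a star extension $A_i^{*}$ of $\hat A_i$ whose root $m_i$ has colour $c$, with $r_i$ as its unique successor in the finite tree $T^{A_i^{*}}$. Each $A_i^{*}$ is again a finite tree of $B$-sets, containing $A_i$ as a strong substructure.

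With $A_1^{*}$ and $A_2^{*}$ now both rooted at colour $c$, the common base is easy to produce. Let $S$ be the one-node tree of $B$-sets whose single vertex has colour $c$ and carries the two-point (single-edge) $B$-set. The root $B$-set of each $A_i^{*}$ is a star with at least three points, so sending the two vertices of $S$ to the star centre of $A_i^{*}$ and one of its leaves is a $B$-set embedding onto a two-element subset, which is automatically strong since there are no incomparable triples to test. As $S$ has a single node of colour $c$ mapping to the root of $A_i^{*}$, and the $f$- and $g$-conditions are vacuous for a single linear node, this is a strong arboreal embedding of $S$ into $A_i^{*}$. Applying Corollary \ref{4.4} to amalgamate $A_1^{*}$ and $A_2^{*}$ over $S$ yields a finite tree of $B$-sets $E$ with strong embeddings of both $A_1^{*}$ and $A_2^{*}$; composing with $A_i \hookrightarrow A_i^{*}$ gives strong embeddings of $A_1$ and $A_2$ into $E$, which is exactly the joint embedding property.

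The main obstacle is the colour-matching step: one must check that the star extensions down to the common colour $c$ can genuinely be carried out — hence the preliminary enlargement of the root $B$-sets — and that the single-edge structure $S$ then embeds strongly into both extensions. Once this is arranged, the real work is discharged by the amalgamation result already in hand.
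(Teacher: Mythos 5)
Your proof is correct, but it takes a genuinely different route in its final step. The paper never invokes the amalgamation property: after the same two preliminary moves you make (enlarging each root $B$-set to have at least three points, and choosing $c \in C$ below the colours of both roots), it writes down the joint structure explicitly --- a new root $r$ of colour $c$ whose $B$-set is a ``double star'' with two adjacent ramification points $e_1, e_2$, the remaining $m-1$ and $n-1$ vertices being leaves attached to $e_1$ and $e_2$ respectively, and with disjoint copies of $A_1$ and $A_2$ placed above $r$ by setting $f_r(r_1) = e_1$, $f_r(r_2) = e_2$ and defining the $g$-maps in the evident way. Your reduction to Corollary~\ref{4.4} spares you this explicit construction and the check that both embeddings into it are strong; and since $\mathcal C$ has no smallest member, JEP does not follow from AP formally, so your manufacture of a common seed $S$ via star extensions is exactly the extra input needed --- indeed it is the same combinatorial idea as the paper's, since amalgamating $A_1^{*}$ and $A_2^{*}$ over $S$ with the centre of each star glued to a leaf of the other reproduces precisely the paper's double star. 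One point deserves care in your version: Corollary~\ref{4.4} rests on Lemmas~\ref{4.2} and~\ref{4.3}, which are phrased for strong \emph{substructures} (inclusions), whereas your two copies of $S$ sit at generally distinct nodes $m_1 \neq m_2$ of $T_C$; converting your embeddings into inclusions requires replacing one of $A_1^{*}$, $A_2^{*}$ by an internally isomorphic copy, which uses the transitivity of ${\rm Aut}(T_C)$ on nodes of a fixed colour (a renaming move the paper itself makes elsewhere, e.g.\ in Theorem~\ref{4.6}). You could sidestep this entirely by placing the seed at a single node $m$ of colour $c$ below $r_1 \curlywedge r_2$ (which exists since $T_C$ is meet-closed), so that $S$ is literally a common strong substructure of both star extensions.
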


\begin{proof} This is done just as in  \cite{Bhattacharjee}. Let $A_1$ and $A_2$ be two finite trees of $B$-sets, 
with roots $r_1$ and $r_2$ and $B$-sets there of sizes $m$ and $n$ respectively. By increasing $A_1$ and $A_2$ if 
necessary we may assume that $m, n \ge 3$. Choose $c \in C$ which is less than the colours of $r_1$ and $r_2$. 
We form an extension $E$ which has root $r$ coloured $c$, and above this, disjoint copies of $A_1$ and $A_2$. At 
$r$, $E$ has a $B$-set which has two ramification points $e_1$ and $e_2$ which are joined by an edge, and the 
other vertices are $\{x_i: 1 \le i \le m-1\} \cup \{y_j: 1 \le j \le n-1\}$, all leaves, with $x_i$ joined to 
$e_1$ and $y_j$ joined to $e_2$. The two nodes of $T^E$ above $r$ are $r_1$ and $r_2$, and $f_r(r_1) = e_1$, 
$f_r(r_2) = e_2$, $g_{r r_1}$ maps  $\{x_i: 1 \le i \le m-1\} \cup \{e_2\}$ bijectively to the points of  
$B(r_1)$ and  $g_{r r_2}$ maps $\{y_j: 1 \le i \le n-1\} \cup \{e_1\}$ bijectively to the points of $B(r_2)$.          
  \end{proof}

We now wish to deduce the existence of a Fra\"iss\'e limit of the family $\mathcal C$ of finite trees 
of $B$-sets over the ambient tree $T_C$ derived from the chain $C$ under the relation of strong 
embedding. The method given in \cite{Bhattacharjee} was to quote a version of Fra\"iss\'e's Theorem 
as presented in \cite{Evans}. To appeal to this directly, the class has to be over a first order 
language. Since we are avoiding use of $L$-structures, we have elected to reprove the relevant limit result 
directly, for which we can go through the same steps as in \cite{Evans} adapted for the present 
context. Such steps are part of the common core of methods given in considerably more general categorical versions of Fra\"iss\'e's Theorem in e.g. \cite{Droste2}, \cite{Droste3}, \cite{Kubis}, \cite{Caramello}, or \cite{Kubis2}, allowing the construction of chain $A_i$, such that the union $A$ of this chain satisfies the numbered properties below. We further verify that the union of this sequence is itself a countable tree of $B$-sets.

\begin{theorem} \label{4.6} Let $T_C$ be the countable coloured tree with no least element obtained 
from a countable chain $C$ with no least as given above, in which all nodes ramify infinitely, and let 
$\mathcal C$ be the family of finite trees of $B$-sets over $T_C$, with embeddings being strong 
embeddings. Then there is a unique countable tree of $B$-sets $A$ such that

(i) every finite tree of $B$-sets which is a strong substructure of $A$ lies in $\mathcal C$,

(ii) $A$ is a union of a chain of members of $\mathcal C$,

(iii) every member of $\mathcal C$ embeds strongly into $A$,

(iv) if $B$ and $E$ are members of $\mathcal C$ such that $B$ is a strong substructure of $E$, then 
any strong embedding of $B$ into $A$ extends to a strong embedding of $E$ into $A$,

(v) every isomorphism between finite strong substructures of $A$ extends 
to an automorphism of $A$.     \end{theorem}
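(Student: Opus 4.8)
The plan is to mirror the standard Fra{\"i}ss\'e construction as presented in \cite{Evans}, but carried out directly at the level of trees of $B$-sets rather than via the associated $L$-structures. The ingredients are already in place: by Corollary \ref{4.4} the class $\mathcal C$ has the amalgamation property, and by Lemma \ref{4.5} it has the joint embedding property; the class is also countable up to isomorphism (since each member is a finite structure over the countable ambient tree $T_C$), and it has only countably many members up to isomorphism because the finite coloured trees and finite $B$-sets involved form a countable supply. These are precisely the hypotheses needed for a Fra{\"i}ss\'e-type limit to exist and be unique.

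For existence, first I would build $A$ as the union of an increasing chain $A_0 \subseteq A_1 \subseteq A_2 \subseteq \cdots$ of members of $\mathcal C$ (with each inclusion a strong embedding), giving property (ii) by construction. The chain is constructed by a bookkeeping argument: one enumerates all pairs $(B, E)$ of members of $\mathcal C$ (up to isomorphism) with $B$ a strong substructure of $E$, together with all strong embeddings of such $B$ into the finite stages built so far, and at each step one uses amalgamation (Corollary \ref{4.4}) to extend the current stage $A_i$ so as to realize one outstanding extension, i.e.\ to find a strong embedding of $E$ agreeing with the given embedding of $B$. A standard dovetailing ensures every requirement is eventually met, which yields the extension property (iv). Property (iii) follows from (iv) together with joint embedding (Lemma \ref{4.5}) by embedding each member over the empty or a minimal structure; and property (i) holds because any finite strong substructure of $A$ already lies in some $A_i \in \mathcal C$, and $\mathcal C$ is closed under strong substructure by its definition as \emph{all} finite trees of $B$-sets over $T_C$. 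I must take care that the union of a chain of finite trees of $B$-sets, with the inherited $f$ and $g$ maps, really is a (possibly infinite) tree of $B$-sets in the sense defined earlier; this is a direct verification that the defining conditions, being of finite character, pass to the union.

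The homogeneity property (v) and uniqueness both follow from the extension property (iv) by the usual back-and-forth argument. Given an isomorphism between two finite strong substructures of $A$, one extends it alternately to cover successive elements of $A$ on each side, at each stage invoking (iv) to push the partial isomorphism forward (respectively backward) across a one-point enlargement of its domain (respectively range); since $A$ is countable this produces an automorphism of $A$ extending the given map. Uniqueness of $A$ among countable structures satisfying (i)--(iv) is proved by the same back-and-forth, now between two such limits $A$ and $A'$, building an isomorphism in countably many steps.

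The main obstacle I anticipate is not the abstract Fra{\"i}ss\'e bookkeeping, which is routine, but the interface between the tree-of-$B$-sets formalism and the first-order back-and-forth machinery. Specifically, a ``one-point extension'' of a strong substructure of $A$ (the step the back-and-forth must execute) is not a single-element extension of a first-order domain but one of the combinatorial moves classified in Lemma \ref{4.1}, and the partial maps being extended are \emph{strong arboreal} embeddings, which constrain both the ambient tree $T_C$ (colours, meets, ramification) and the $B$-sets at every node simultaneously. The care needed is to ensure that each extension step can genuinely be realized \emph{inside} $A$ as a strong substructure, for which the richness of $T_C$ established in Lemma \ref{3.1} (every point lies in a $C$-chain, infinite ramification at each non-maximal node) is exactly what guarantees enough ``room'' to perform the amalgamated extensions without colliding with the existing structure. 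Translating the results of Lemmas \ref{3.7} and \ref{3.8} so that strongness is preserved throughout the chain is where the real bookkeeping lies.
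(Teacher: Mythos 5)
Your proposal follows essentially the same route as the paper's own proof: a direct Fra\"iss\'e-style chain construction using Corollary \ref{4.4} and Lemma \ref{4.5}, with dovetailed bookkeeping to secure the extension property (iv), verification that the union inherits well-defined $f$ and $g$ maps, and back-and-forth for both homogeneity (v) and uniqueness. The one point the paper treats at greater length than you do is the check that the tree populated by the limit is isomorphic to all of $T_C$ (infinite ramification at non-maximal nodes, and a $C$-chain through every point), which you subsume under your ``finite character'' remark but which in the paper requires its own amalgamation arguments.
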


\begin{proof} We note that there are only countably many members of $\mathcal C$ up to (arboreal) 
isomorphism. In addition for any $B$ and $E$ in $\mathcal C$, there are only finitely many strong 
embeddings of $B$ into $E$ (the argument would still work, provided that there were only countably 
many). We form an increasing sequence $A_0 \le A_1 \le A_2 \le \ldots$ of members of $\mathcal C$, 
such that each is a strong substructure of the next, and take $A$ for the union of the chain. Thus 
(ii) is fulfilled. We let $A_0$ be any member of $\mathcal C$. Enumerate all of $\mathcal C$ up to 
isomorphism in a countable sequence, and at odd stages, ensure using Lemma \ref{4.5} that each 
particular member of $\mathcal C$ embeds strongly in some $A_i$. This guarantees the truth of (iii). 
Furthermore, any finite tree of $B$-sets which is a strong substructure of $A$, must be contained in
some $A_i$, of which it must be a strong substructure, and hence also lies in $\mathcal C$. Thus (i) holds.

At even stages $i$ from 2 on, we ensure that all strong embeddings extend. List all strong embeddings 
between members of $\mathcal C$  so that each is listed infinitely many times. At a given stage, suppose 
that $\theta: B \to E$ is a strong embedding between members of $\mathcal C$. If $B$ does not embed 
strongly in $A_i$, we let $A_{i+1} = A_i$. Since $B$ embeds as a strong substructure of some $A_j$, and 
since the embedding is listed infinitely many times, the embedding will appear in the list at some even 
stage $k$ beyond the $j$th, {\em then} we can extend non-trivially. Precisely, we amalgamate the strong 
embeddings of $B$ into $E$ and into $A_k$ (both inclusion), and this results in a member $A_{k+1}$ of 
$\mathcal C$ and strong embeddings of $E$ and $A_k$ into $A_{k+1}$ `making the diagram commute'. By 
replacing by an isomorphic copy, we may assume that the embedding of $A_k$ into $A_{k+1}$ is inclusion. 

This concludes the inductive construction of all the $A_i$s, and hence of their union $A$. As it a countable union of finite trees of $B$-sets, $A$ is itself countable.

We proceed to show that $A$ is a tree of $B$-sets. Let $T_i = T^{A_i}$ be the finite $C$-coloured 
tree populated by  $A_i$. Then $(T_i)_{i \in \omega}$ is an increasing sequence of finite 
$C$-coloured trees, and it follows easily that $T = \bigcup_{i \in \omega}T_i$ is a $C$-coloured 
tree, since all the properties required for this are verified by looking at finitely many points 
at a time. We show that $T \cong T_C$. 

First, all points ramify infinitely, except on the top level (if it exists). To see this, fix a
non-maximal $t \in T$ and finite $n \ge 3$. Consider a finite tree containing $t$ and $t_1, \ldots, t_n$ 
such that for each $i \neq j$, $t_i \curlywedge t_j = t$. We find $B \in {\mathcal C}$, with a $B$-set 
at the root $t$, having exactly $n$ ramification points $a_1, \ldots, a_n$ and $f_t(t_i) = a_i$. By 
(iii) this embeds strongly into $A$, and so (using also (v)) $T$ has ramification order at least $n$ at 
$t$. Since $n$ was arbitrarily large, $T$ ramifies infinitely at $t$. Points on the top level do not 
ramify at all.

Next, any point $t$ of $T$ lies in a maximal chain which is isomorphic to $C$. For let 
$C = \{c_n: n \in \omega\}$. Then by using amalgamation, it can be seen that there are 
$m_0 < m_1 < m_2 < \ldots$ such that $A_{m_n}$ contains a chain containing $t$ in which all colours 
in $\{c_i: i \le n\}$ occur, and it can be arranged that these chains are nested, and hence their 
union is a chain isomorphic to $C$. 

Next we have to define the $f$ and $g$ maps. For any $s \in T$, and $t > s$,  there is $i$ such 
that $s, t \in T_i$, and hence in $T_i$, $f_s(t)$ exists and lies in $B(s)$. Since $A_i$ is a strong 
substructure of each $A_j$ for $j \ge i$, the value of $f_s(t)$ stays the same in $A_j$, and hence 
we may unambiguously define it to be the same in $A$, and the property that $f_s(t_1) = f_s(t_2)$ if 
and only if $t_1$ and $t_2$ lie in the same cone at $s$ is inherited from its truth in $A_j$ for a large 
enough $j$. If $s = t$, $g_{s s}$ is the identity on $B(s)$. If $s < t$, to define $g_{s t}(x)$ where 
$x \in B(s) \setminus \bigcup_{s \le u < t}g^{-1}_{s u} (f_u(t))$, we find $i$ large enough so that 
$s, t \in T_i$, and $x \in B(s)$ in $A_i$, and let  $g_{s t}(x)$ take its value as in $A_i$. Since
$A_i$ is a strong substructure of subsequent $A_j$s, this is well-defined, and the required properties 
are verified.

It remains to verify properties (iv) and (v).  

To prove (iv), note that by (iii), $B$ embeds strongly into $A$, so by replacing $B$ by a copy, we may assume 
that $B$ is a strong substructure of $A$. By construction, the strong embedding of $B$ into 
$E$ extends to a strong embedding into some $A_i$, and hence into $A$.

Property (v) may be proved by back-and-forth using the extension property which has been arranged during 
the construction.    \end{proof}

\section{Analyzing the limit structure}

Now we study the limit structure which Theorem \ref{4.6} showed exists. Unlike in \cite{Bhattacharjee} we 
do not expect this to be $\aleph_0$-categorical, in view of the presence of the infinitely many colours $C$ 
as labels of the nodes of $T_C$. We defined `tree of $B$-sets' so that it would apply also to the infinite 
case, specifically to the limiting structure as well as all the finite approximations. The fact that $A$ 
takes this form was verified in Theorem \ref{4.6}, its automorphisms are the strong isomorphisms $A$ to $A$. As explained in the discussion, it is important that 
there is an associated $L$-structure $(M, L)$, which the automorphism group of $A$ acts upon. Another key 
feature is that we can extend the $[a]_t$ notation, so that now this will be an infinite subset of the 
domain $M$ of the $L$-relation, and the members of $B(t)$ will be all the possible $[a]_t$, for fixed $t$, which will be pairwise disjoint subsets of $M$. We shall also remark that the $B$-set on each $B(t)$ is infinite 
and dense in which each vertex has ramification order $\aleph_0$. This last is immediate from the observation 
that for any $c \in C$, and $n$, there is a finite tree of $B$-sets in which the $B$-set at some node coloured 
$c$ contains a `large' finite tree, namely one in which all points are at distance up to $n$ from a fixed 
`centre', and has valency $n$ at all non-leaf vertices. One now amalgamates to show that this behaviour must 
apply at every node, and hence in $A$, the $B$-set at every node has all vertices with ramification order 
$\aleph_0$. In a similar way one establishes density. An important semilinear order, $K$, introduced earlier, 
arises as each branch at each node $[a]_t$ of $(B(t), B_t)$, which has maximal chains isomorphic to 
$\mathbb Q$, and ramifies at each point with infinite ramification order. 

To ease notation, we write the ($C$-coloured) tree populated by $A_i$ as $T_i$, and $L_i$ the $L$-relation on 
$B(r_i)$ in $A_i$, where $r_i$ is the root of $T_i$. Now to obtain the domain $M$ of an $L$-relation from those 
of the $A_i$, we appeal to Lemma \ref{3.8}, which tells us that the $L$-relation $L_i$ can be embedded in 
$L_{i+1}$, so we can therefore form the union of all their domains (more formally, their direct limit), and 
obtain a natural $L$-relation on this union, given by taking the union of all the individual $L_i$s.

Next to make sense of the notation $[a]_t$ for $t \in T$, where $a \in M$, we find $i$ so that $a$ lies in the 
domain of $A_i$, and $t$ in $T_i$, where $[a]_t$ already has a meaning (provided that $a$ lies in the domain of 
$g_{rt}$ where $r$ is the root of $T_i$). This is dependent upon $i$, but as $i$ increases, the fact that we 
have a sequence of strong substructures tells us that the values of $[a]_t$ also increase, and so in $A$ we 
may just take the union of all these finite sets. Note that as in each $A_i$ any two $[a]_t$ for the same value 
of $t$ are either disjoint or identical (as explained in the discussion before Lemma \ref{3.3}), the same applies 
in $A$.

We remark that in \cite{Bhattacharjee} the `official' structure constructed was the $L$-structure $(M, L)$, and 
some detailed calculations demonstrated how the tree of $B$-sets $A$ could be represented from the $L$-structure. 
We have chosen to do things the other way round, or rather to handle the $L$-structure simultaneously. In section 6 
we shall show that one can `recover' $A$ from $(M, L)$, as in \cite{Bhattacharjee}.
All that we require 
at this stage is that $G = {\rm Aut}(A)$ {\em acts} (faithfully) on the set $M$, which it does by Lemma \ref{3.7}, and preserves $L$, which it does by Lemma \ref{3.8}. This enables 
us to work in practice with $A$ rather than $(M, L)$. All our arguments in this section will be about the tree 
of $B$-sets $A$, rather than the associated $L$-relation.

For each $t \in T$, we let $S_t = \bigcup\{[a]_t: a \in M\}$. This is called the {\em pre-$B$-set} at $t$. 

Saying that $S_t$ is a `pre-$B$-set' is meant to suggest that after we have taken the quotient by a suitable 
equivalence relation on it, it becomes (the domain of) a $B$-set. The equivalence relation $E_t$ in question, 
following the notation of \cite{Bhattacharjee}, is the one whose classes are the $[a]_t$. Thus 
$S_t/E_t = \{[a]_t: a \in S_t\} = B(t)$, and as above we have the $B$-relation $B_t$ on $B(t)$. Here $G_t$ is the subgroup of automorphisms of $A$ fixing $t \in T$.

A subset $U$ of $M$ is called a {\em pre-branch} if for some $t \in T$, $U \subseteq S_t$, and 
$\{[w]_t: w \in U\}$ is a branch of $B_t$ (at some point of $B(t)$).

Analogues of parts of Proposition 5.3 of \cite{Bhattacharjee} are then as follows. We recall that a {\em congruence}
of the action of a group on a set is an equivalence relation preserved by the group. There is an `improper'
congruence, in which all elements of the set are related, and if we say that a congruence is `maximal', we
understand that it is maximal subject to being proper.

\begin{lemma} \label{5.1} (i) For each $c \in C$, $G$ acts transitively on $\{[a]_t: a \in M, t \in T, F(t) = c\}$. 

(ii) $E_t$ is the unique maximal $G_t$-congruence on $S_t$; it is even the greatest proper $G_t$-congruence on $S_t$.

(iii) $G_t$ is transitive on the set of triples of distinct members of $\{[a]_t: a \in S_t\}$ for which $B_t$ holds.

\end{lemma}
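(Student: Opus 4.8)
The plan is to derive all three parts from the extension property Theorem~\ref{4.6}(v), feeding it very small (linear) finite trees of $B$-sets, since these are automatically strong substructures. For (i), fix nodes $t,s$ with $F(t)=F(s)=c$ and vertices $[a]_t\in B(t)$, $[b]_s\in B(s)$; I want $g\in G$ with $g([a]_t)=[b]_s$ as subsets of $M$. I would take the two single-node finite trees of $B$-sets $A_1,A_2$ whose trees are $\{t\}$ and $\{s\}$ and whose $B$-sets are the single edges $\{[a]_t,w\}$ and $\{[b]_s,w'\}$. Each is a strong substructure of $A$: a two-point (hence linear) $B$-set has no incomparable triple, so the centroid condition is vacuous, and since the leaf condition puts no constraint on $\tau(t)$ when $\tau(t)$ is internal in $A$, the embedding is strong. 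The assignment $t\mapsto s$ preserves colours, and as $\mathrm{Aut}(T_C)$ is transitive on the nodes of each fixed colour (by the back-and-forth of Lemma~\ref{3.1}, applied to $C$-chains through $t$ and through $s$), it is the restriction of an ambient automorphism; together with the edge isomorphism $[a]_t\mapsto[b]_s,\ w\mapsto w'$ this is an internal arboreal isomorphism $A_1\to A_2$. By Theorem~\ref{4.6}(v) it extends to $g\in\mathrm{Aut}(A)$, and by the correspondence of Lemma~\ref{3.7} the induced permutation of $M$ carries the vertex $[a]_t$ to $\varphi_t([a]_t)=[b]_s$.

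Part (iii) is the same in spirit: two triples on which $B_t$ holds are each a $3$-point linear $B$-set at $t$, so the single-node structures carrying them strongly embed in $A$; mapping one ordered triple to the other while fixing the tree $\{t\}$ gives an internal arboreal isomorphism which, by (v), extends to an element of $G_t$. For (ii) I first record the easy facts. Since $g\in G_t$ fixes $t$, it preserves $B(t)$ and permutes the classes $[a]_t$, so $E_t$ is a $G_t$-congruence. The single-edge argument of (i), now with the tree fixed at $\{t\}$, shows that $G_t$ induces a $2$-transitive action on $B(t)=S_t/E_t$; a $2$-transitive action is primitive, so $E_t$ is a \emph{maximal} proper $G_t$-congruence, any congruence strictly coarser than $E_t$ descending to a proper nontrivial congruence of the quotient. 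I would also note, for later use, that since colours strictly increase along chains, each $s\le t$ is the unique node of its colour below $t$, so $G_t$ fixes the chain $\{s:s\le t\}$ pointwise.

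It remains to prove uniqueness, that every proper $G_t$-congruence $\rho$ satisfies $\rho\subseteq E_t$; this is the crux. Suppose instead $x\sim_\rho y$ with $x\in[a]_t$, $y\in[b]_t$, $[a]_t\ne[b]_t$. The reduction I would make rests on the sub-claim
$(\ast)$: for all $x,x'\in[a]_t$ and $y\in[b]_t$ there is $g\in G_t$ with $g(y)=y$ and $g(x)=x'$.
Granting $(\ast)$, applying such $g$ to $x\sim_\rho y$ yields $x'\sim_\rho y$, whence $x\sim_\rho x'$; so the whole class $[a]_t$, and symmetrically $[b]_t$, lies in one $\rho$-class. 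Then $2$-transitivity of $G_t$ on $B(t)$ provides, for each vertex $[c]_t$, an element fixing the vertex $[a]_t$ and sending $[b]_t$ to $[c]_t$; it maps the set $[a]_t\cup[b]_t$ to $[a]_t\cup[c]_t$, which must therefore again lie in a single $\rho$-class, forcing $[c]_t$ into the $\rho$-class of $[a]_t$. As $[c]_t$ ranges over $B(t)$ this merges all classes, so $\rho$ is improper, a contradiction. Hence $\rho\subseteq E_t$, and $E_t$ is the unique maximal $G_t$-congruence.

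The whole difficulty is thus concentrated in $(\ast)$, which I expect to be the main obstacle. Since $x,x'\in[a]_t$ they agree at every level $\ge t$ and separate only \emph{below} $t$, precisely where $G_t$ fixes the nodes pointwise, while $y$, lying in the distinct vertex $[b]_t$, is separated from both at or below $t$. I would choose a finite strong substructure of $A$ populating the chain up to $t$ and containing root-representatives of $x,x',y$, arranged so that $x,x'$ lie in the part of the root $B$-set collapsing under $g_{rt}$ to $[a]_t$ while $y$ collapses to $[b]_t$. The desired automorphism is assembled with $\tau$ the identity on this finite tree, taking $\varphi$ to be the identity on the branch collapsing to $[b]_t$ (fixing $y$) and, on the branch collapsing to $[a]_t$, a $B$-set isomorphism carrying $x$ to $x'$; such an isomorphism exists by the edge-mapping argument applied to the lower $B$-sets, and it respects the $g$-maps because it preserves the branch structure feeding $[a]_t$. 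Extending this internal arboreal isomorphism by (v) yields the required $g\in G_t$. The delicate point, and exactly what the notion of strong substructure was introduced to control, is to package $x,x',y$ into a \emph{single} finite strong substructure on which these two local moves are simultaneously realizable \emph{and} strong; verifying that the assembled embedding is strong (so that (v) applies) is where the care must be taken.
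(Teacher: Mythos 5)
Your parts (i) and (iii), and the maximality half of (ii), are correct and essentially follow the paper's route: degenerate one-node trees of $B$-sets (the paper uses singleton $B$-sets, you use edges and linear triples) are vacuously strong substructures, the tree maps involved are internal, and Theorem \ref{4.6}(v) supplies the extensions. Your single-edge derivation of $2$-transitivity of $G_t$ on $B(t)$ is in fact leaner than the paper's, which instead builds three-node trees $\{t,s_1,s_2\}$, $\{t,s_1',s_2'\}$ with $f_t(s_1)=[x]_t$, $f_t(s_2)=[y]_t$, etc. Your reduction of uniqueness to the sub-claim $(\ast)$ is also exactly the paper's reduction.

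The genuine gap is your proposed proof of $(\ast)$. The points $x,x',y$ are elements of $M$, and no finite strong substructure of $A$ contains them as objects: a finite strong substructure $A'$ with root $r'$ contains, at each of its finitely many nodes $s\ge r'$, only the vertices $[x]_s,[x']_s,[y]_s$, and in the limit structure every such class is an \emph{infinite} subset of $M$ (each $E_s$-class is a union of infinitely many classes at any lower node, and points of $M$ are recovered only as intersections $\{a\}=\bigcap_{s\in T_a}[a]_s$ along infinite descending chains). An automorphism $g$ obtained from Theorem \ref{4.6}(v) extends a map defined on $A'$ but is completely uncontrolled below $r'$; since, by the description of the action of $G$ on $M$, $g(y)=y$ holds if and only if $g$ fixes $[y]_s$ for all sufficiently small $s\in T_y$, fixing the vertex $[y]_{r'}$ setwise does not fix the point $y$, and mapping $[x]_{r'}$ to $[x']_{r'}$ does not send $x$ to $x'$: it guarantees only $g(y)\in[y]_{r'}$ and $g(x)\in[x']_{r'}$, which is useless for the congruence argument. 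Prescribing the action on actual points of $M$ requires controlling the automorphism on an infinite coinitial set of nodes, and this is precisely what the paper's Theorem \ref{5.3} does: it picks a coinitial sequence $s_0>s_1>\cdots$ below $t$, produces maps $\theta_n$ by repeated use of \ref{4.6}(v), and glues them into a single automorphism fixing $M\setminus[a]_t$ pointwise. The paper's proof of (ii) then simply quotes that statement as a forward reference (legitimately, since Theorem \ref{5.3} uses only part (i) of the present lemma). So the difficulty you flagged at the end is real, but it is not, as you suggest, a matter of verifying strongness of one finite embedding: no single finite configuration, however packaged, can yield $(\ast)$, and an infinite gluing construction of the kind in Theorem \ref{5.3} is unavoidable.
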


\begin{proof} (i) This follows from Theorem \ref{4.6}(v). For singleton subtrees of $T$ (with singleton $B$-sets)
themselves form (degenerate, strongly embedded) finite trees of $B$-sets, and so two such having the same 
colour must lie in the same $G$-orbit.

(ii) As the $[a]_t$ form a partition of $S_t$ it is clear that $E_t$ is an equivalence relation on $S_t$. By its 
definition it follows that $E_t$ is preserved by the action of $G_t$ on $S_t$. To show that $E_t$ is a maximal 
$G_t$-congruence on $S_t$, we need to show that $G_t$ acts primitively on $B(t)$. Rather we verify the stronger
statement that it acts 2-transitively. Note that any two 2-element substructures of a $B$-relation are automatically 
isomorphic, since $B$ is ternary. For an arbitrary $B$-relation, it is not the case that this isomorphism need
extend to an automorphism. However our structure is sufficiently homogeneous to ensure that this extension 
exists. Formally this is again done by an appeal to Theorem \ref{4.6}(v). We give some more precise details. 

If $t$ is maximal, then the $B$-relation on $B(t)$ is isomorphic to the linear betweenness relation on $\mathbb Q$, 
on which $G_t$ acts 2-transitively. To deal with other cases, we first note that if $t$ is not maximal, and $c$
is some colour greater than the colour of $t$, then for any $[a]_t \in B(t)$ by considering a star at a node
coloured $F(t)$ below a node coloured $c$, and applying applying Theorem \ref{4.6}(iii) and (v), we see that
there is $s > t$ in $T$ coloured $c$ such that $f_t(s) = [a]_t$. Now let $[x]_t,[y]_t$ and $[x']_t,[y']_t$ be two 
pairs of distinct elements of $B(t)$. By what we have just remarked, there are $s_1, s_2, s_1', s_2'$ coloured
$c$ such that $f_t(s_1) = [x]_t$, $f_t(s_2) = [y]_t$, $f_t(s_1') = [x']_t$, and $f_t(s_2') = [y']_t$. Since 
$[x]_t, [y]_t, [x']_t, [y']_t$ must be ramification points of $B_t$, we can find isomorphic finite trees of 
$B$-sets having nodes $\{t, s_1, s_2\}$ and $\{t, s_1', s_2'\}$ (with leaves added at $[x]_t$, $[y]_t$, 
$[x']_t, [y']_t$ to fulfil the definition of `trees of $B$-sets'). By Theorem \ref{4.6}(v) once more, this
isomorphism extends to an automorphism of $A$, which lies in $G_t$ and takes $[x]_t$ to $[x']_t$ and $[y]_t$
to $[y']_t$. Since $G_t$ therefore acts 2-transitively on $B(t)$, and hence primitively, it follows that 
there is no $G_t$-congruence on $S_t$ properly containing $E_t$, so $E_t$ is maximal.

To see that $E_t$ is the unique maximal $G_t$-congruence, suppose that $\rho$ is a $G_t$-congruence on $S_t$ which
is not contained in $E_t$. The point is that $G_t$ acts transitively on each equivalence class $[x]_t$ of $E_t$. In 
fact, we appeal to a stronger statement, namely that the pointwise stabilizer of $M \setminus [x]_t$ acts transitively
on $[x]_t$, which is proved in Theorem \ref{5.3}. Now as $\rho \not \subseteq E_t$, there are $\rho$-equivalent $x, y$ 
such that $[x]_t \neq [y]_t$. Consider any $x' \in [x]_t$. By the above remark, there is a member of $G_t$ taking $x$ 
to $x'$ and fixing $y$. As $\rho$ is a $G_t$-congruence, $x' \sim_\rho y$. It follows that $[x]_t$ is contained in the 
$\rho$-class $[x]_\rho$ containing $x$. Now consider any $z \in [x]_\rho$. Since $G_t$ acts transitively on $B(t)$, 
there is $g \in G_t$ taking $[x]_t$ to $[z]_t$. This must fix $[x]_\rho$, and therefore $[z]_t \subseteq [x]_\rho$. 
It follows that $[x]_\rho$ is a union of $E_t$-classes. Thus $E_t \subset \rho$, and as $E_t$ is maximal, $\rho$ 
must be the improper congruence. Therefore $E_t$ is the unique maximal $G_t$-congruence on $S_t$.   

(iii) This also follows by considering suitable finite trees of $B$-sets as for the arguments in (i). \end{proof}

Now we are able to recover many of the constructs derived in \cite{Bhattacharjee}, but this time more directly. 
One of their notations was $S_{xyz}$ where $L(x; y, z)$, which is the subset of $M$ comprising those points $w$ 
such that for some finite strong substructure $B$ of $A$, $L(x; y, z)$ is witnessed at the root of $B$, and $w$ 
lies in the $B$-set at that root. In our context we can describe this as the union of all the sets of the form 
$[a]_t$, where $t$ is the node of $T$ at which $L(x; y, z)$ is witnessed. Since we don't really need to use triples 
to get to this point, we write $S_{xyz}$ instead as $S_t$. We observe that if $s \le t$ in $T$, and $[a]_t$ exists, 
then so does $[a]_s$, and it follows that $S_s \supseteq S_t$ (as remarked above).

It is important that Lemma \ref{3.3} holds also in the current context (for the limit structures). 

\begin{lemma} \label{5.2} For any $a, b, c \in M$ for which $L(a; b, c)$, there is a unique $t \in T^A$ such that 
$a, b, c \in S_t$, $[a]_t$, $[b]_t$, and $[c]_t$ are distinct, and $B_t([a]_t; [b]_t, [c]_t)$. Furthermore, for any 
distinct $a, b, c \in M$, the $L$-relation holds between $a, b,$ and $c$ in some order. \end{lemma}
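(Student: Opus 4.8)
The plan is to reduce the entire statement to its finite counterpart, Lemma \ref{3.3}, exploiting the fact that $(M, L)$ and $A$ are the direct limits of the finite data $(M^{A_i}, L_i)$ and $A_i$. Recall from the construction preceding this lemma that $M = \bigcup_i M^{A_i}$, that $L = \bigcup_i L_i$, and that for $a \in M$ and $t \in T$ the set $[a]_t$ is obtained (when it exists) by passing to any finite stage $A_i$ with $a$ in the domain of $A_i$ and $t \in T_i$, and taking the union of the resulting increasing family of finite sets. In particular, whether $a \in S_t$, whether $[a]_t, [b]_t, [c]_t$ are distinct, and whether $B_t([a]_t; [b]_t, [c]_t)$ holds, are all decided already at a sufficiently large finite stage.

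First I would establish existence. Given $a, b, c \in M$ with $L(a; b, c)$, choose $i$ large enough that $a, b, c$ all lie in the domain of $A_i$; since $L = \bigcup_j L_j$ and the transition inclusions are strong, Lemma \ref{3.8} gives that $L$ restricted to $M^{A_i}$ coincides with $L_i$, so $L_i(a; b, c)$ holds. Applying Lemma \ref{3.3} to the finite tree of $B$-sets $A_i$ yields a unique $t \in T_i$ at which $[a]_t, [b]_t, [c]_t$ are defined and distinct and $B_t([a]_t; [b]_t, [c]_t)$ holds. Because every $A_i$ is a strong substructure of each later $A_j$, the relation $B_t$ is unchanged in passing from $A_i$ to $A_j$, as are the values of the relevant $[a]_t$; hence the same node $t$ continues to witness $L(a; b, c)$ at every later stage, and therefore witnesses it in $A$.

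For uniqueness, suppose $t$ and $t'$ are both nodes witnessing $L(a; b, c)$ in $A$. Choose $i$ large enough that $a, b, c$ lie in the domain of $A_i$ and $t, t' \in T_i$. Since witnessing is decided at finite stages, both $t$ and $t'$ witness $L(a; b, c)$ in $A_i$, and the uniqueness clause of Lemma \ref{3.3} forces $t = t'$. The final assertion is handled the same way: for distinct $a, b, c \in M$, pick $i$ with $a, b, c$ in the domain of $A_i$; the second part of Lemma \ref{3.3}, applied to $A_i$, gives that $L_i$ holds between them in some order, and since $L_i \subseteq L$ the same holds for $L$.

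The main obstacle is the stability claim used for existence: that the node witnessing $L(a; b, c)$ does not migrate as we enlarge the finite approximation. This rests on the fact that a strong substructure inclusion preserves the $B$-relation at each node (it is precisely the restriction) and preserves centroids, so the maximal node at which the triple stays distinct with one lying between the other two is genuinely the same in $A_i$ and in every $A_j \supseteq A_i$; this is exactly the behaviour of witnessing nodes under strong embeddings recorded in the proof of Lemma \ref{3.8}. Once this is in hand, everything else is a direct transfer from the finite case.
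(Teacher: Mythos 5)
Your proof is correct and is essentially the paper's own argument: the paper likewise picks $i$ with $a,b,c\in M_i$, applies Lemma \ref{3.3} to $A_i$, and notes that the witnessing node ``serves equally well in the limit structure.'' Your write-up simply makes explicit the stability facts (definedness, distinctness and the $B_t$-relations are decided at finite stages and preserved by the strong-substructure inclusions) and the pull-back of uniqueness to a finite stage, which the paper leaves implicit.
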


\begin{proof} We may find $i$ such that $a, b, c \in M_i$. By Lemma \ref{3.3} there is $t \in T_i$ as claimed. This 
then serves equally well in the limit structure.     \end{proof}

Our whole object is to demonstrate that we have constructed a Jordan group which preserves a limit of $B$-relations, 
and which does not preserve the other structures in the list of Jordan groups. A Jordan group is, by definition, a 
permutation group, and the group $G$ in this case is taken to be the automorphism group of $A$, considered in its 
action on the corresponding $L$-set, denoted by $M$. Since this is to be a Jordan group, we should find subsets of 
$M$ which are Jordan sets for this action. We take sets of the form $[a]_t$. 

\begin{theorem} \label{5.3} For any $t \in T$ and $a \in S_t$, $[a]_t$ is a Jordan set for $G$.  \end{theorem}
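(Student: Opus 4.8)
The plan is to dispose of the two cardinality requirements quickly and then concentrate on the substantive point, namely that the pointwise stabiliser $H = G_{(M \setminus [a]_t)}$ acts transitively on $[a]_t$. That $[a]_t$ is a proper infinite subset of $M$, with infinite complement, follows from the density and infinite ramification of each $B(t)$ recorded at the start of this section; in particular (since $T$ has no least element) even a node which served as a root of some finite approximation acquires an infinite $[a]_t$ in the limit. Hence the cardinality conditions in the definition of a Jordan set hold for whatever finite transitivity degree $G$ is known to have. So I fix $x, y \in [a]_t$ and aim to construct $g \in \mathrm{Aut}(A)$ with $g \restriction (M \setminus [a]_t) = \mathrm{id}$ and $g(x) = y$, which yields transitivity and hence the Jordan property.

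The construction will be a back-and-forth with Theorem \ref{4.6}(v) as the engine. First I would fix an enumeration of $M$ and build an increasing chain of finite partial bijections $\sigma_0 \subseteq \sigma_1 \subseteq \cdots$ of $M$, each induced (via Lemmas \ref{3.7} and \ref{3.8}) by an arboreal isomorphism between finite strong substructures of $A$, subject to the invariants that $\sigma_n$ fixes every point of $(\mathrm{dom}\,\sigma_n) \setminus [a]_t$, maps $(\mathrm{dom}\,\sigma_n) \cap [a]_t$ into $[a]_t$, and satisfies $\sigma_0(x) = y$. By Theorem \ref{4.6}(v) each $\sigma_n$ extends to an automorphism of $A$, and by alternately forcing successive enumerated points into domain and range I would arrange that $g = \bigcup_n \sigma_n$ is a total automorphism; the fixing invariant then forces $g$ to be the identity on $M \setminus [a]_t$, while $g(x) = y$.

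Two kinds of extension step arise, and the second is the crux. To bring a new point $m \in [a]_t$ into the domain, I would use homogeneity of the part of $A$ lying above the vertex $v = [a]_t$ of $B(t)$ (the restricted tree of $B$-sets of Lemma \ref{3.6}) to choose an image for $m$ inside $[a]_t$ compatibly with $L$, invoking Theorem \ref{4.6}(v) once more. To bring a new complement point $m \in M \setminus [a]_t$ into the domain I must extend by $m \mapsto m$, and here the key sub-lemma is that a partial map fixing complement points and displacing points only within $[a]_t$ preserves the $L$-type of every complement point, so that $\sigma_n \cup \{m \mapsto m\}$ again preserves $L$. The heart of this is a witness analysis using Lemma \ref{5.2}: at each level $u \le t$ the trace of $[a]_t$ on $S_u$ projects exactly onto the set $\beta_u = g_{ut}^{-1}(v)$, a union of branches of $B_u$ at the ramification point $f_u(t)$ (as in Lemma \ref{3.2}), and every complement point of $S_u$ projects outside $\beta_u$; consequently any instance of $L$ linking $m$ to points of $[a]_t$ is witnessed at a node where the displaced points have not yet separated from one another, or in material disjoint from the $\beta_u$ and from the cone above $v$, which is precisely the region on which the map acts as the identity.

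The hard part will be exactly this witness analysis, together with the realisation that two members of $[a]_t$ may already differ below $t$, since $g_{ut}$ merges distinct vertices of $\beta_u$ only at level $t$. Thus the displacement $x \mapsto y$ need not be confined to the cone above $v$ but propagates down the nested $\beta_u$ for $u \le t$, and I must check that at each such level the move can be realised by an automorphism of $B_u$ which is the identity off $\beta_u$ --- fixing in particular $f_u(t)$ and every branch there not belonging to $\beta_u$ --- so that the assembled arboreal map is a strong self-embedding fixing the whole complement. That each $B_u$ is rich enough to supply such branch-automorphisms, and transitively so on the relevant vertices, is what I would extract from the $2$-transitivity and triple-transitivity of the node stabilisers established in Lemma \ref{5.1}, combined with the global tree-homogeneity of Theorem \ref{4.6}(v). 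Once this crux sub-lemma is in place the back-and-forth closes routinely and delivers the required $g$.
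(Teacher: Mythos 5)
Your proposal founders on two genuine gaps, both located exactly at the point you flag as the crux. First, your extension step for complement points concludes only that $\sigma_n \cup \{m \mapsto m\}$ ``again preserves $L$'', but $L$-preservation of a finite partial map of $M$ is not an invariant the back-and-forth can run on. The only homogeneity available is Theorem \ref{4.6}(v), which applies to arboreal isomorphisms between finite \emph{strong} substructures of the tree of $B$-sets $A$; at this stage of the paper there is no passage from $L$-data back to tree-of-$B$-sets data (that reconstruction is deliberately deferred to Section 6, and even there is carried out only for the full structure, not for finite pieces). Moreover the implication genuinely fails: the example of \cite{Bhattacharjee} (pages 66--67), cited just before the definition of strong substructure, is precisely a finite configuration compatible with the ternary relations that is not strong and does not extend --- the strongness machinery exists because ``preserves $L$'' is too weak. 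Compounding this, Theorem \ref{4.6}(v) extends a finite isomorphism to an automorphism of $A$ with \emph{no control whatsoever} outside the finite configuration, so it cannot serve as the engine that keeps complement points fixed; what you actually need is a constrained extension lemma for strong arboreal isomorphisms satisfying your invariants, and that lemma is exactly the substantive content your sketch leaves unproved.

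Second, the conditions ``$g$ fixes $d$'' and ``$g(x)=y$'' are not finite conditions, so your invariants do not typecheck for finite approximations. A finite strong substructure sees only the classes $[d]_s$ for the finitely many nodes $s$ of its tree; a point of $M$ is recovered only as $\{d\}=\bigcap_{s \in T_d}[d]_s$ (this is how the paper's proof of Theorem \ref{5.3} opens, and it is also why the point-maps of Lemma \ref{3.8} involve arbitrary choices of representatives). So ``$\sigma_n$ fixes every point of $(\mathrm{dom}\,\sigma_n)\setminus[a]_t$'' and ``$\sigma_0(x)=y$'' must become conditions on classes at ever-deeper levels, and the deepening is not forced: if $\varphi([x]_s)=[y]_s$ and $s'<s$, an arboreal extension need only send $[x]_{s'}$ into $g_{s's}^{-1}([y]_s)$, a union of infinitely many branches, not necessarily to $[y]_{s'}$. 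Thus both the displacement of $x$ and the fixing of each complement point must be actively maintained down an infinite descending chain, which is precisely why the paper avoids a back-and-forth altogether: it fixes a coinitial sequence $(s_n)$ below $t$, produces for each $n$ (by Theorem \ref{4.6}(v) and Lemma \ref{5.1}(i), essentially your level-wise maps) an automorphism $\theta_n$ fixing $[a]_t$ and $[r_{s_n}]_{s_n}$ with $\theta_n([b]_{s_n})=[c]_{s_n}$, and then \emph{glues} these along the semilinear order of classes, using $\theta_n[b]_{s_n}=[c]_{s_n}=\theta_{n+1}[b]_{s_n}$, finally extending to the rest of $T$ by the identity and by arbitrary isomorphisms on the subtrees hanging inside the moved branches. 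Your structural analysis --- that $\beta_u=g_{ut}^{-1}([a]_t)$ is a union of branches at $f_u(t)$, that complement material lies off it, and that the move is realizable levelwise by automorphisms that are the identity off $\beta_u$ --- is the correct key fact and matches the paper's; what is missing is a sound assembly mechanism, and supplying one essentially forces you into the paper's coherent-glueing construction rather than a routine closing of the back-and-forth.
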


\begin{proof} We recall that this means that the pointwise stabilizer of $M \setminus [a]_t$ acts transitively on 
$[a]_t$. To discuss transitivity of the group on this subset, it is necessary to understand how $G$ acts on members 
of $M$. The first point to note is that the nodes $[a]_t$ are all subsets of $M$, and that if $s < t$ in $T$, and 
$[a]_t$ exists, then so does $[a]_s$, and $[a]_s \subset [a]_t$. (This is immediate for finite trees of $B$-sets, and 
carries over to the limit.) In particular, the set of nodes $s$ of $T$ such that $[a]_s$ exists is downwards closed. 
Let us write $T_a$ for the set of all $s \in T$ such that $[a]_s$ exists. We now see that 
$\{a\} = \bigcap_{s \in T_a}[a]_s$. The inclusion $\subseteq$ is immediate. Conversely, let $b \neq a$ in $M$. 
There must therefore be some $i$ such that $a$ and $b$ both lie in $M_i$, and as $M_i$ is a substructure of $M$, 
$a$ and $b$ are unequal there, at node $s$ say (the root of $T_i$). By definition, $[a]_s = \{a\}$ and 
$[b]_s = \{b\}$. Thus $[a]_s \cap [b]_s = \emptyset$. For any $j \ge i$, this will still be true, though the two 
sets increase in size, and hence also in $M$. It follows that $b \not \in \bigcap_{s \in T_a}[a]_s$.

Now we can use this representation to say how $G$ acts on $M$, provided that we know how it acts on the tree of 
$B$-sets. This is precisely done by `taking limits', meaning that $\theta(a) = b$ if and only if for some 
$t_1 \in T_a, t_2 \in T_b$, $\theta$ takes $\{t \in T_a: t \le t_1\}$ to $\{t \in T_b: t \le t_2\}$, and for 
each $t \in T_a$ such that $t \le t_1$, $\theta([a]_t) = [b]_t$.

Reverting to the main argument, we write $J$ for the chain of points of $T$ less than $t$. For each $s \in J$, let 
$r_s$ be chosen in $M$ so that $f_s(t) = [r_s]_s$. In the tree of $B$-sets, $[r_s]_s$ is the member of $B(s)$ which 
is removed in passing up the linear order $J$, and whose branches are coalesced at higher stages (this is easier to 
visualize if $s$ has an immediate successor in $J$, but is no less true in general). Note that in particular,
$[r_s]_s \cap S_t = \emptyset$. We consider equivalence relations $E_s$, already defined, whose classes are the 
$[x]_s$ as $x$ varies, except that we now write this for the restriction of the original relation to $S_t$. We 
note that if $s_1 < s_2$ in $J$, then $E_{s_1} \upharpoonright S_{s_2} \subset E_{s_2}$, since each member of
$E_{s_2}$ is a union of pre-branches of $S_{s_1}$ at $[r_{s_1}]_{s_1}$ and hence of infinitely many members of
$E_{s_1}$.

Next we observe that the set $S$ of all $[b]_s$ for $b \in [a]_t$ where $s \le t$ forms an upper semilinear order
when partially ordered by inclusion, with top element $[a]_t$. For suppose that $[b]_s \subseteq [c]_{s_1}, [d]_{s_2}$
where $b, c, d \in [a]_t$ and $s, s_1, s_2 \in J$, and assume that $s_1 \le s_2$. Then 
$[b]_{s_2} \subseteq [c]_{s_2}, [d]_{s_2}$, so $[c]_{s_2} \cap [d]_{s_2} \neq \emptyset$. But all nodes of $B(s_2)$
are pairwise disjoint, and so $[c]_{s_2} = [d]_{s_2}$, which implies that $[c]_{s_1} \subseteq [d]_{s_2}$.

Now each branch of $B_s$ at $[r_s]_s$ is isomorphic to the semilinear order $K$ which was introduced in section 2, 
with chains isomorphic to $\mathbb Q$ of positive type and infinite ramification order. In \cite{Bhattacharjee} it 
was remarked that $G_{(M \setminus [a]_t)}$ acts on $S$ as a generalized wreath product of copies of ${\rm Aut}(K)$, 
and this was used to show that it acts transitively on $[a]_t$. We carry this out here more directly.

Choose a coinitial $\omega$-sequence $(s_n: n \in \omega)$ in $J$, meaning that $s_0 > s_1 > s_2 > \ldots$ and
every member of $J$ lies above some $s_n$. Let $b, c \in [a]_t$, and we find $\theta \in G_{(M \setminus [a]_t)}$
taking $b$ to $c$. The first task is to show that there is $\theta_n \in G$ fixing $[a]_t$ and $[r_{s_n}]_{s_n}$
and taking $[b]_{s_n}$ to $[c]_{s_n}$. This is done by considering finite trees of $B$-sets each having three nodes,
$s_n$, $t$, $t_n$ and $s_n$, $t$, $t_n'$ respectively, where $t$, $t_n$ and $t_n'$ have the same colour. For by
Theorem \ref{5.1}(i), there is a member of $G$ taking $[r_{s_n}]_{s_n}$ to $[b]_{s_n}$ and we take $t_n$ to be
the image of $t$ under such a map, and similarly for $t_n'$. For the tree of $B$-sets on nodes $s_n$, $t$, $t_n$, 
the $B$-set at $s_n$ has vertices $[r_{s_n}]_{s_n}$ and $[b]_{s_n}$, as well as 4 other vertices to ensure that
$[r_{s_n}]_{s_n}$ and $[b]_{s_n}$ are both ramification points, a singleton $[a]_t$ at $t$, and another singleton 
at $t_n$. Similarly for the tree of $B$-sets on nodes $s_n$, $t$, $t_n'$ with $[b]_{s_n}$ replaced by $[c]_{s_n}$, 
and $t_n$ by $t_n'$. There is an isomorphism between these two trees of $B$-sets which fixes $s_n$ and $t$, and by 
Theorem \ref{4.6}(v) this extends to $\theta_n \in G$ taking the first to the second, which provides the map desired.

Now let $\theta$ be defined first on $S$ by 
$$\theta(x) = \left \{ \begin{array}{lll}
\theta_0(x) & \mbox{ if} & x \not \subseteq [b]_{s_0}  \\
\theta_{n+1}(x) & \mbox{ if} & x \subseteq [b]_{s_n} \mbox{ and } x \not \subseteq [b]_{s_{n+1}}
\end{array} \right.$$
which is obtained by `glueing' together these maps, which is an order-automorphism of $S$ since 
$\theta_n[b]_{s_n} = [c]_{s_n} = \theta_{n+1} [b]_{s_n}$. One can see that this works by subdividing $S$ into countably
many sets $\{x: x \not \subseteq [b]_{s_0}\}$ and 
$\{x: x \subseteq [b]_{s_n} \mbox{ and } x \not \subseteq [b]_{s_{n+1}}\}$ in the domain and 
$\{x: x \not \subseteq [c]_{s_0}\}$ and 
$\{x: x \subseteq [c]_{s_n} \mbox{ and } x \not \subseteq [c]_{s_{n+1}}\}$ in the range.

This $\theta$ is now extended to act on the whole of $M$ (equivalently $T$). By the initial remarks about the action of 
$G$ on $M$, we see that $\theta(b) = c$. In a similar way, it is already defined on the whole of $[a]_t$, since any
member $x$ of $[a]_t \setminus \{b\}$ satisfies $x \not \in [b]_{s_n}$ for some least $n$ and then its image under 
$\theta$ is given by looking at $\theta_n$. We extend by fixing all points of $M \setminus [a]_t$. What is required to
conclude the argument is to find an action of $\theta$ on $T$ which induces the map already chosen. (In fact, giving 
the action of $\theta$ on $T$ would be sufficient to say how it acts on $M$, since it must preserve the $f$ and $g$ 
functions.) Let $u$ be an arbitrary node of $T \setminus J$. If $t \le u$ then we let $\theta(u) = u$. This preserves 
all the tree of $B$-sets structure above $t$, since $[a]_t$ is fixed, as are all other nodes contained in $B(t)$, by 
definition. Otherwise, $s = t \curlywedge u < t$, so lies in $J$. We know that $\theta([b]_s) = [c]_s$, and we have 
to show how to map the tree (or forest, if $s$ has a successor) above $[b]_s$ to the tree above $[c]_s$. One of the 
key points is that all these trees are disjoint, and so we can consistently glue together isomorphisms independently. 
Now $G$ acts transitively on the points of $B(s)$, and therefore the tree of $B$-sets above $[b]_s$ is isomorphic to 
the tree of $B$-sets above $[c]_s$, so there is {\em some} isomorphism we can use. We see that whichever 
one we have chosen will work. The key point is that it will fix all points of $M \setminus [a]_t$.

To justify this last statement, we recall our remarks at the start about how the members of $G$ act on $M$. That made it 
clear that this is entirely determined by how points are mapped `far enough down'. Let $d$ be an arbitrary member of 
$M \setminus[a]_t$. By what we showed, $\theta(d) = d$ if and only if there are $t_1, t_2 \in T_d$ for which $\theta$ 
takes $\{s \in T_d: s \le t_1\}$ to $\{s \in T_d: s \le t_2\}$, and for each $s \in T_d$ such that $s \le t_1$, 
$\theta([d]_s) = [d]_s$. Since $T$ is a tree, below some node, all members of $T_d$ lie in $J$, and since we know that
these are all fixed by $\theta$, we can take $t_1 = t_2 = t$ to be any member of $T_d \cap J$, which verifies the desired 
condition.    \end{proof}

\begin{corollary} \label{5.4} Each pre-branch is a Jordan set for $G$.  \end{corollary}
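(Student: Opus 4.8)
The plan is to deduce this from Theorem \ref{5.3} together with the analysis of the action on $M$ carried out in its proof, rather than by identifying a pre-branch with a single node-set $[b]_s$. Such an identification in fact fails in the limit: since $g_{ts}$ collapses an entire branch of $B_t$ at $f_t(s)$ to a single vertex of $B(s)$, a pre-branch is in general a proper intersection of sets of the form $[b]_s$ and is not itself of that form. So fix a pre-branch $U$, so that for some $t\in T$ and some $p=[d]_t\in B(t)$ the set $\{[w]_t:w\in U\}$ is a branch $\beta$ of $B_t$ at $p$, with $U=\bigcup\{[w]_t:[w]_t\in\beta\}$. I must show that the pointwise stabilizer $G_{(M\setminus U)}$ acts transitively on $U$. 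The useful structural remark is that every class $[e]_t\notin\beta$ is disjoint from all classes lying in $\beta$, hence $[e]_t\subseteq M\setminus U$; so fixing $M\setminus U$ pointwise forces fixing, as a set, every vertex of $B(t)$ outside $\beta$, while leaving room to move vertices inside $\beta$.

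Given $b,c\in U$, I would first treat the case $[b]_t=[c]_t$. Here $b$ and $c$ lie in the single set $[b]_t$, which by Theorem \ref{5.3} is a Jordan set, so there is $\theta\in G_{(M\setminus[b]_t)}$ with $\theta(b)=c$. Since $[b]_t\subseteq U$ we have $M\setminus U\subseteq M\setminus[b]_t$, whence $G_{(M\setminus[b]_t)}\subseteq G_{(M\setminus U)}$, and this $\theta$ already lies in $G_{(M\setminus U)}$, as required.

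The substantive case is $[b]_t\neq[c]_t$, where $b$ and $c$ represent distinct vertices of $B(t)$ lying in the common branch $\beta$. Here I would produce $\theta_1\in G_{(M\setminus U)}$ with $\theta_1([b]_t)=[c]_t$ as subsets of $M$; then $\theta_1(b)$ and $c$ both lie in $[c]_t\subseteq U$, and an application of the first case supplies $\theta_2\in G_{(M\setminus U)}$ with $\theta_2\theta_1(b)=c$, so that $\theta=\theta_2\theta_1$ works. To build $\theta_1$ I would use that, as recorded just before Lemma \ref{5.1}, the branch $\beta$ is a copy of the semilinear order $K$, whose automorphism group is transitive on its vertices; since $\beta$ is attached to the rest of $B(t)$ only at the ramification point $p$, such an automorphism fixes $B(t)\setminus\beta$ and extends by the identity, giving a $B$-set automorphism of $B(t)$ carrying $[b]_t$ to $[c]_t$ and fixing every other vertex. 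By Theorem \ref{4.6}(v), applied to finite trees of $B$-sets witnessing this move together with representatives pinning down $p$ and the other branches at $p$, this extends to an automorphism of $A$ inducing the desired vertex move.

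The main obstacle, and the only place where work beyond Theorem \ref{5.3} is genuinely needed, is to arrange that $\theta_1$ fixes $M\setminus U$ \emph{pointwise} and not merely setwise, since the extension supplied by Theorem \ref{4.6}(v) controls only a finite strong substructure. I would overcome this exactly as in the final paragraphs of the proof of Theorem \ref{5.3}, by glueing together local automorphisms and invoking the principle established there that the action of an element of $G$ on $M$ is completely determined by its action far down the tree. The move inside $\beta$ disturbs only the part of the tree of $B$-sets lying above $p$ in the direction of $\beta$; all nodes below $t$, the class $p$ itself, and the structure above every other branch at $p$ are disjoint from this, so $\theta_1$ can be taken to be the identity there, and the verification that it then fixes each point of $M\setminus U$ is the same descent argument used to conclude the proof of Theorem \ref{5.3}.
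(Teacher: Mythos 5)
Your Case~1 reduction is correct and clean: since $[b]_t\subseteq U$ we have $M\setminus U\subseteq M\setminus[b]_t$, hence $G_{(M\setminus[b]_t)}\subseteq G_{(M\setminus U)}$, and Theorem \ref{5.3} finishes that case. The genuine gap is in Case~2, in the construction of $\theta_1$, and it concerns exactly the point that makes Theorem \ref{5.3} (and this corollary) nontrivial. You claim that the move inside $\beta$ ``disturbs only the part of the tree of $B$-sets lying above $p$ in the direction of $\beta$'', so that $\theta_1$ may be taken to be the identity on all nodes below $t$. This is impossible. An automorphism of $A$ fixing the nodes $s\le t$ of $T$ consists of $B$-set automorphisms $\varphi_s$ satisfying $\varphi_t g_{st}=g_{st}\varphi_s$, and $g_{st}$ is surjective onto $B(t)$; so if $\varphi_s$ were the identity on $B(s)$ for even one $s<t$, then $\varphi_t$ would be the identity on $B(t)$ and could not carry $[b]_t$ to $[c]_t$. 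Thus the move at level $t$ forces compatible moves (carrying $g_{st}^{-1}([b]_t)$ onto $g_{st}^{-1}([c]_t)$, and so on) at \emph{every} level $s<t$, and since $T$ has no least element there is no bottom level at which this downward propagation can be anchored. That is precisely why the proof of Theorem \ref{5.3} glues automorphisms $\theta_n$ along a coinitial $\omega$-sequence; your appeal to that glueing is contradicted by your simultaneous claim that everything below $t$ is untouched. (Indeed the claim has matters nearly backwards: for $u$ in the cone of $T$ attached at $p$, the map $g_{tu}$ collapses whole branches of $B_t$ at $p$, so the structure above $p$ is the one part that genuinely is undisturbed; the disturbance occurs at $t$ itself, in the cones attached at vertices lying in $\beta$, and at every level below $t$.) Nor can ${\rm Aut}(K)$-transitivity plus Theorem \ref{4.6}(v) fill the hole: extending a finite partial isomorphism gives no control whatever on $M\setminus U$, and pointwise fixation of an infinite set is not a finite condition.

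Once the downward propagation is accepted, the construction of your $\theta_1$ becomes, in effect, the paper's own proof: take $J$ to be the chain $\{s\in T:s\le t\}$ (now including $t$, which is the only change from Theorem \ref{5.3}), choose $r_s$ with $[r_s]_s=f_s(t)$ for $s<t$ together with the vertex $[r]_t$ at which $U$ is a branch, note that $G_{(M\setminus U)}$ must fix each $r_s$ since $r_s\notin U$, and re-run the coinitial glueing argument for this longer chain. That argument takes $b$ to $c$ directly for arbitrary $b,c\in U$, so it handles both of your cases at once and the case distinction saves no work. Your preliminary remark that a pre-branch is not in general of the form $[b]_s$ is correct when $C$ is dense (though when $F(t)$ has a successor in $C$, the pre-branch $U$ is exactly $[b]_{s'}$ for $s'$ the least node of the relevant cone above $t$), but it is tangential: the paper likewise avoids any such identification.
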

\begin{proof} We follow the same idea as in the lemma. Let $U$ be a pre-branch of the $B$-relation $B_t$ at a node $[r]_t$. 
This time we let $J$ be the chain of members of $T$ which are less than or equal to $t$, and for each $s \in J$ choose 
$r_s$ so that $[r_s]_s = f_s(t)$. So $[r]_t = [r_t]_t$. As before, the pointwise stabilizer of $M \setminus U$ fixes each 
$r_s$ (since clearly $r_s \not \in U$). Running the same argument as in Theorem \ref{5.3}, for any $a$ and $b$ in $U$ 
there is an element $\theta$ of $G_{(M \setminus U)}$ taking $a$ to $b$.   \end{proof}

\begin{corollary} \label{5.5}  Each set $S_t$ is a Jordan set for $G$ and $E_t$ is a maximal congruence on $S_t$.
Furthermore, the pointwise stabilizer $G_{(M \setminus S_t)}$ induces a $2$-transitive group preserving a 
$B$-relation on $S_t/E_t$.   \end{corollary}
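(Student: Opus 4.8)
The plan is to reduce all three assertions to a single main point: that the pointwise stabilizer $H = G_{(M \setminus S_t)}$ induces a \emph{2-transitive} group of $B_t$-automorphisms on the quotient $S_t/E_t = B(t)$. First I would record the set-up. Any $\theta \in H$ preserves $S_t$ setwise, hence fixes the node $t$ (which is determined by its pre-$B$-set $S_t$); thus $H \le G_t$. By Lemma \ref{5.1}(ii) the relation $E_t$ is $G_t$-invariant, so $H$ maps $E_t$-classes to $E_t$-classes and thereby induces an action on $B(t)$ which preserves $B_t$, being the restriction of an automorphism of $A$. So $E_t$ is certainly an $H$-congruence on $S_t$, and the $B$-relation that the induced group preserves is $B_t$ itself; what remains is transitivity, 2-transitivity, and maximality.

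The crux is the 2-transitivity of $H$ on $B(t)$, and this is where the main difficulty lies. The 2-transitivity of the larger group $G_t$ on $B(t)$ was already obtained inside the proof of Lemma \ref{5.1}(ii), by realizing pairs of elements of $B(t)$ as ramification points via nodes above $t$ of a common colour and appealing to Theorem \ref{4.6}(v). The obstacle here is that an automorphism supplied by Theorem \ref{4.6}(v) is only guaranteed to fix a prescribed \emph{finite} substructure, whereas membership of $H$ demands that the \emph{infinite} complement $M \setminus S_t = \bigcup_{s < t}[r_s]_s$ be fixed pointwise. To overcome this I would reuse the coinitial-gluing device of Theorem \ref{5.3}: writing $J = \{s \in T : s < t\}$ and choosing $r_s$ with $f_s(t) = [r_s]_s$, fix a coinitial $\omega$-sequence $s_0 > s_1 > \cdots$ in $J$; for a given pair of ordered pairs of distinct points of $B(t)$, build for each $n$ an automorphism $\theta_n \in G$ that fixes $[r_{s_n}]_{s_n}$ and induces the desired map on $B(t)$ (using finite trees of $B$-sets together with Theorem \ref{4.6}(v), exactly as in Lemma \ref{5.1}(ii)); then glue the $\theta_n$ along the sequence and extend by the identity on $M \setminus S_t$. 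The justification that this extension fixes every point of $M \setminus S_t$ is the same as the closing argument of Theorem \ref{5.3}: the action of an automorphism on a point $d$ is determined by its behaviour on $[d]_s$ for $s$ far enough down, and every such $s$ eventually lies in $J$, where $\theta$ is the identity.

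With 2-transitivity of $H$ on $B(t)$ in hand, the three stated conclusions follow quickly. For the Jordan property, given $b, c \in S_t$ I would first use transitivity of $H$ on $B(t)$ to find $\theta_1 \in H$ carrying $[b]_t$ to $[c]_t$, and then, since $[c]_t$ is a Jordan set by Theorem \ref{5.3} and $G_{(M \setminus [c]_t)} \le H$, find $\theta_2 \in H$ carrying $\theta_1(b)$ to $c$ within $[c]_t$; the composite $\theta_2\theta_1 \in H$ sends $b$ to $c$, so $H$ is transitive on $S_t$ and $S_t$ is a Jordan set. For maximality of $E_t$, any $H$-congruence $\rho$ with $E_t \subseteq \rho$ descends to an $H$-congruence on $B(t)$; as the induced group is 2-transitive, hence primitive, this descended relation is trivial or universal, so $\rho = E_t$ or $\rho$ is improper, proving $E_t$ maximal. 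Finally, the ``furthermore'' clause is precisely the statement that the induced 2-transitive group preserves the $B$-relation $B_t$, already noted above. The only genuinely delicate step is the 2-transitivity, and there the subtlety is entirely the passage from fixing a finite substructure to fixing the infinite complement, handled by the limit construction borrowed from Theorem \ref{5.3}.
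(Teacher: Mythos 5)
Your reduction of all three clauses to the $2$-transitivity of the group induced by $H = G_{(M\setminus S_t)}$ on $B(t)$ is logically coherent, and the derivations of the Jordan property and of maximality \emph{from} that lemma are fine; but the lemma itself is where your proof breaks, and the gluing you propose does not work as stated. In Theorem \ref{5.3} the glued map is only ever evaluated on the semilinear order $S$ of classes contained in the \emph{single} class $[a]_t$; every $\theta_n$ fixes the top element $[a]_t$ of that order, so the only compatibility required is along the one chain $[b]_{s_0} \supset [b]_{s_1} \supset \ldots$, where it holds because $\theta_n[b]_{s_n} = [c]_{s_n} = \theta_{n+1}[b]_{s_n}$. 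In your adaptation the gluing domain must consist of \emph{all} classes contained in $S_t$, whose maximal elements are all of $B(t)$. Your $\theta_n$ are constrained only at $[x]_t$, $[y]_t$ and $[r_{s_n}]_{s_n}$; on the remaining (infinitely many) classes $[w]_t$ they will in general induce \emph{different} permutations of $B(t)$. That destroys well-definedness: any automorphism must preserve inclusion of classes, so a deep refinement $[w]_s \subseteq [w]_t$ assigned to $\theta_n$ must satisfy $\theta_n([w]_s) \subseteq \theta([w]_t)$, and since $\theta_n([w]_s) \subseteq \theta_n([w]_t)$ and distinct classes at level $t$ are disjoint, consistency forces $\theta_n([w]_t) = \theta_0([w]_t)$ for \emph{every} $[w]_t \in B(t)$ --- i.e.\ all the $\theta_n$ must induce one and the same permutation of $B(t)$. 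That is an infinite compatibility condition which Theorem \ref{4.6}(v) (homogeneity over \emph{finite} strong substructures) does not supply, and you give no mechanism for arranging it. So the central step of your proof, and hence the whole chain of deductions, is left with a genuine gap.

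You should also note that the paper's own proof of the main clause does not go through $2$-transitivity at all, and is much cheaper: since each pre-branch is a Jordan set (Corollary \ref{5.4}) and $S_t$ is the union of all pre-branches, it suffices to check that the family of pre-branches is \emph{directed} (any two pre-branches $U$, $V$ lie in a common pre-branch $W$, found by choosing a suitable third node of $B_t$), and then to quote \cite{Adeleke3} Lemma 3.2, which says that the union of a connected family of Jordan sets is again a Jordan set. Maximality of $E_t$ is then quoted from Lemma \ref{5.1}(ii). This route gives transitivity of $H$ on $S_t$ without ever needing the delicate claim you attack; conversely, the ``furthermore'' clause (which the paper itself dispatches very briefly) is exactly the point where extra work of the kind you attempt is needed, but it has to be organised so that the pieces being glued already agree on $B(t)$ and on everything above it, as they automatically do in Theorem \ref{5.3}.
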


\begin{proof} For this we use \cite{Adeleke3} Lemma 3.2, which says that the union of a `connected' family of Jordan 
sets is a Jordan set, where `connected' means in the graph-theoretical sense under the relation of overlapping non-trivially. 
Clearly $S_t$ is the union of all the pre-branches (at varying nodes of $B_t$), so it suffices to observe that the 
family of all pre-branches is a directed set. For let $U$ and $V$ be pre-branches at nodes $[a]_t$ and $[b]_t$, and we 
shall find a pre-branch $W \supseteq U \cup V$. First suppose that $[a]_t = [b]_t$. Since there are infinitely many 
pre-branches at $[a]_t$, we may find a node $[c]_t \neq [a]_t$ lying in a branch not equal to those defined by $U$ or 
$V$. Let $W$ be the (unique) pre-branch at $[c]_t$ containing $a$ (and $b$). This contains $U \cup V$ as desired. If 
$[a]_t \neq [b]_t$, choose a node $[c]_t$ between them under the relation $B_t$, and let $[d]_t$ lie in a branch of 
$B_t$ at $[c]_t$ which does not contain $[a]_t$ or $[b]_t$, and let $W$ be the pre-branch at $[d]_t$ containing $c$. 
Then again $W \supseteq U \cup V$.

The fact that $E_t$ is a maximal congruence as stated was shown in Lemma \ref{5.1}(ii). The final statement follows on 
considering $B_t$.                \end{proof}

\begin{lemma} \label{5.6} $(G, M)$ is $2$-primitive. \end{lemma}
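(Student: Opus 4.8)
The plan is to unwind the definition: $(G,M)$ is $2$-primitive precisely when it is $2$-transitive and, for each $\alpha\in M$, the stabiliser $G_\alpha$ acts primitively on $M\setminus\{\alpha\}$. I would establish these two facts separately, the first quickly from homogeneity and the second by a Jordan-set analysis.

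For $2$-transitivity I would argue directly from Theorem \ref{4.6}(v). Given distinct $a,b$ and distinct $a',b'$ in $M$, recall from the proof of Theorem \ref{5.3} that there is a node at which $a,b$ are distinct singletons $[a]=\{a\}$, $[b]=\{b\}$, and likewise for $a',b'$; moreover this property persists on passing downwards, since $[a]_{s}\subseteq[a]_t$ for $s<t$. As $C$ has no least element I would pick a colour $c\in C$ below the colours of both these nodes, and since every point of $T_C$ lies in a $C$-chain there are nodes of colour $c$ below each. The single-node trees of $B$-sets carrying the $2$-point linear $B$-relation at these two equally coloured nodes are (trivially, there being no incomparable triple) strong substructures of $A$, and the colour-preserving map sending $a\mapsto a'$, $b\mapsto b'$ is an arboreal isomorphism between them; by Theorem \ref{4.6}(v) it extends to an automorphism of $A$, whose induced action on $M$ sends $a\mapsto a'$ and $b\mapsto b'$.

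For primitivity of $G_\alpha$ the basic tool is the standard Jordan-set dichotomy. If $\Gamma$ is any Jordan set of $G$ with $\alpha\notin\Gamma$, then $G_{(M\setminus\Gamma)}\le G_\alpha$ is transitive on $\Gamma$, so $\Gamma$ is a Jordan set for $(G_\alpha,M\setminus\{\alpha\})$; hence for any $G_\alpha$-congruence $\sim$ with class $\Sigma$ meeting $\Gamma$ one has $\Gamma\subseteq\Sigma$ or $\Sigma\subseteq\Gamma$. I would apply this with $\Gamma$ ranging over the pre-branches (Corollary \ref{5.4}) and the sets $S_t$ (Corollary \ref{5.5}) that avoid $\alpha$. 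Suppose $\Sigma$ is a nontrivial class, containing distinct $\beta,\gamma$, and split according to the $L$-type of $\{\alpha,\beta,\gamma\}$, which is defined by Lemma \ref{5.2}. In the case where some $\sim$-related pair $\beta,\gamma$ in $\Sigma$ has $\alpha$ \emph{between} them, I would choose a pre-branch $U\ni\gamma$ with $\alpha,\beta\notin U$ (possible precisely because $\alpha$ separates $\beta$ from $\gamma$); since $G_{(M\setminus U)}\le G_{\alpha,\beta}$ is transitive on $U$ and preserves $\sim$, transporting $\beta\sim\gamma$ shows $U\subseteq\Sigma$, and symmetrically a pre-branch around $\beta$ lies in $\Sigma$. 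Having a $\sim$-class containing pre-branches on both sides of $\alpha$, I would then use $2$-transitivity to match $L$-types and carry any remaining point into $\Sigma$ by an element of $G_\alpha$ fixing a suitable point of one of these pre-branches, forcing $\Sigma=M\setminus\{\alpha\}$.

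The main obstacle is the complementary case, in which $\sim$ relates only pairs $\beta,\gamma$ with $\alpha$ \emph{not} between them. A genuine betweenness relation would here admit the nontrivial congruence whose classes are the branches at $\alpha$; the essential point of working with a \emph{limit} of betweenness relations is that at $\alpha$ there is no coarsest such branching. I would exploit exactly this: using pre-branches avoiding $\alpha$ together with the density and homogeneity supplied by Theorem \ref{4.6} (each pre-branch again carrying a copy of the structure $K$ on which $G_{(M\setminus U)}$ acts richly, via Lemma \ref{5.1}), I would show that any $\alpha$-end pair in $\Sigma$ propagates, after translation inside a pre-branch, to an $\alpha$-between pair, contradicting the case hypothesis. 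Making this propagation rigorous — i.e.\ verifying that the branching at $\alpha$ is infinitely refined, so that no $G_\alpha$-invariant partition into branches can survive — is the delicate heart of the proof, and is where the limit (rather than betweenness) nature of the construction must be used decisively.
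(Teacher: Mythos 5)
The decisive problem is that your ``complementary case'' --- which you yourself call the delicate heart of the proof --- is never actually proved: ``I would show that any $\alpha$-end pair in $\Sigma$ propagates \dots to an $\alpha$-between pair'' is a statement of intent, not an argument. Moreover, the propagation you are seeking needs no new machinery about ``infinitely refined branching''; it is an immediate consequence of Lemma \ref{5.2}, and once it is made explicit your whole case distinction on $L$-types evaporates. For any distinct $\alpha,\beta,\gamma$ the node $t_0$ at which the $L$-relation on $\{\alpha,\beta,\gamma\}$ is witnessed is \emph{unique}; hence at every node $s<t_0$ the classes $[\alpha]_s,[\beta]_s,[\gamma]_s$ are defined, distinct, and incomparable in $B_s$, so by positive type they have a centroid $[d]_s$ distinct from all three, and therefore $[\beta]_s$ and $[\gamma]_s$ lie in the \emph{same} branch of $B_s$ at $[\alpha]_s$. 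In other words, whatever the $L$-type of the triple, every pair $\beta,\gamma$ falls into a common pre-branch at $[\alpha]_s$ avoiding $\alpha$ once $s$ is low enough; there is no single $G_\alpha$-invariant ``partition into branches at $\alpha$'' to rule out, because the branch decompositions at different levels merge as one descends. This is exactly how the paper argues: given a nontrivial block $X$ containing $\beta\neq\gamma$, pick such an $s$; the classes $[\beta]_s$ and $[\gamma]_s$ are Jordan sets avoiding $\alpha$ (Theorem \ref{5.3}), so your own dichotomy puts both inside $X$ (the option $X\subseteq[\beta]_s$ fails since $\gamma\in X\setminus[\beta]_s$); the branch of $B_s$ at $[\alpha]_s$ is a copy of the semilinear order $K$, whose automorphism group is primitive (its cones are Jordan sets and it is transitive), so the entire pre-branch containing $\beta$ lies in $X$; and finally $M\setminus\{\alpha\}$ is the union of the pre-branches at the various $[\alpha]_t$ containing $\beta$ (a covering fact proved by the same going-down observation), which forces $X=M\setminus\{\alpha\}$. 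Your Case 1 manipulations (transporting $\sim$ inside a pre-branch by $G_{(M\setminus U)}$) are sound but become redundant once this is in place.

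A secondary, fixable, gap is in your $2$-transitivity step. First, in the limit there is no node at which $a,b$ are ``distinct singletons'': only in the finite approximants $A_i$ are the root classes singletons, while in $M$ every class $[a]_t$ is infinite; what you need, and what does persist downwards, is merely disjointness $[a]_s\cap[b]_s=\emptyset$. Second, and more seriously, the automorphism produced by Theorem \ref{4.6}(v) from your single-node isomorphism carries the \emph{set} $[a]_t$ onto $[a']_{t'}$, not necessarily the \emph{point} $a$ to $a'$: by the description of the action of ${\rm Aut}(A)$ on $M$ in the proof of Theorem \ref{5.3}, where $a$ is sent is governed by behaviour at cofinally many nodes below $t$, which a one-node extension argument does not control. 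One must correct the resulting automorphism by elements of $G_{(M\setminus[a']_{t'})}$ and then $G_{(M\setminus[b']_{t'})}$, i.e.\ invoke Theorem \ref{5.3} --- or sidestep the issue entirely, as the paper in effect does, by obtaining transitivity of $G_\alpha$ on $M\setminus\{\alpha\}$ directly from the covering of $M\setminus\{\alpha\}$ by pre-branches containing a prescribed point, each of which is a Jordan set.
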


\begin{proof} By definition, this means that the stabilizer $G_a$ of any $a \in M$ is primitive. We follow
the method given in \cite{Bhattacharjee}, Lemmas 5.8, 6.1, and Corollary 6.2. We first apply Lemma 5.8 to
$K$, the semilinear order with chains isomorphic to $({\mathbb Q}, <)$ and infinite ramification order at
each vertex. In this semilinear order, all cones are Jordan sets for its automorphism group. Since it also 
acts transitively, we deduce that its automorphism group is primitive.

Next (following \cite{Bhattacharjee} Lemma 6.1) we show that if $a$ and $b$ are distinct members of $M$, then every 
$c \in M \setminus \{a\}$ lies in a pre-branch $U$ of $[a]_t$ for some $t$ for which $b \in U$. If $b = c$ this is
clear. Otherwise, by Lemma \ref{5.2} there is $s \in T$ such that $[a]_s$, $[b]_s$, and $[c]_s$ all exist and 
are distinct. If $[b]_s$ and $[c]_s$ lie in the same branch of $B_s$ at $[a]_s$, then $c \in U$ for the corresponding 
pre-branch $U$, so we may take $t = s$. Otherwise there is some $t < s$ and $[d]_t \in B(t)$ such that 
$[a]_t$, $[b]_t$, $[c]_t$ lie in distinct branches of $B_t$ at $[d]_t$, in which case $[b]_t$ and $[c]_t$ lie 
in the same branch of $B_t$ at $[a]_t$, and we may take $U$ to be the corresponding pre-branch. To sum up, 
$M \setminus \{a\} = \bigcup\{U: U \mbox{ is a prebranch of some } B(t) \mbox{ containing } b\}$.

Now following \cite{Bhattacharjee} Corollary 6.2, let $X$ be a non-trivial block of $G_a$ where $a \in M$, and let 
$b$ and $c$ be distinct members of $X$. As in the previous paragraph there is $t \in T$ such that $[a]_t$, $[b]_t$, and 
$[c]_t$ exist and are distinct. In some finite tree of $B$-sets $A_i$ approximating our limit tree of $B$-sets $A$ these
tree vertices exist and are distinct, and there is some $s < t$ and $[d]_s \in B(s)$ such that $[a]_s$, $[b]_s$, 
and $[c]_s$ lie in distinct branches of $B_s$ at $[d]_s$. Hence $[b]_s$ and $[c]_s$ lie in the same branch of
$B_s$ at $[a]_s$. Therefore, $b$ and $c$ lie in the same pre-branch of $B_s$ at $[a]_s$. By Lemma \ref{5.3},
$[b]_s$ and $[c]_s$ are Jordan sets, and hence they are both contained in $X$. Since the branch at $[a]_s$ is a 
copy of $K$, its automorphism group is primitive, from which it follows that the whole of the pre-branch at $[a]_s$ 
containing $b$ is contained in $X$. Since 
$M \setminus \{a\} = \bigcup\{U: U \mbox{ is a prebranch of some } B(t) \mbox{ containing } b\}$, we deduce that
$M \setminus \{a\} = X$ as required.

\end{proof} 

\begin{lemma} \label{5.7} For each $a \in M$, there is a $G_a$-invariant $C$-relation on $M \setminus \{a\}$. \end{lemma}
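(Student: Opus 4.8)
The plan is to define a $C$-relation on $M \setminus \{a\}$ directly in terms of the tree-of-$B$-sets structure, and then verify the four axioms (C1)--(C4) together with $G_a$-invariance. The natural definition exploits the branch structure at the nodes $[a]_t$ lying below each point. For distinct $x, y, z \in M \setminus \{a\}$, I would say that $C(x; y, z)$ holds if there is a node $[a]_t \in B(t)$ (for some $t$ with $a, x, y, z$ all appropriately situated) such that $[y]_t$ and $[z]_t$ lie in the same branch of $B_t$ at $[a]_t$, while $[x]_t$ lies in a different branch. Intuitively, this records that $y$ and $z$ are ``closer together, away from $a$'' than $x$ is: the relation measures how the points separate as one moves away from the distinguished point $a$. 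Using Lemma \ref{5.2}, for any triple there is a well-defined witnessing node, so the definition should be unambiguous once one checks consistency across the different nodes at which the relevant sets $[a]_t$ exist.

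First I would pin down the definition carefully, using the fact from Theorem \ref{5.3} and Corollary \ref{5.4} that the sets $[a]_t$ and the pre-branches are arranged coherently, and that as $t$ decreases through $T_a$ the sets $[a]_t$ shrink down to $\{a\}$ (as established in the proof of Theorem \ref{5.3}). The key structural fact I would invoke is the description, already used in Lemma \ref{5.6}, that $M \setminus \{a\} = \bigcup\{U : U \text{ is a pre-branch of some } B(t) \text{ containing } b\}$, which shows that the branch-at-$[a]_t$ structure genuinely stratifies $M \setminus \{a\}$. This gives the underlying ``tree seen from $a$'', and the $C$-relation is precisely the ternary relation canonically associated with such a (now upward-directed) semilinear order or $C$-set. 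I would then verify (C1) (symmetry in the last two arguments, immediate from the definition since $y$ and $z$ play symmetric roles), (C2), (C3), and (C4), each by passing to the witnessing node supplied by Lemma \ref{5.2} and translating the required implication into a statement about branches in a single $B$-set $B_t$, where it follows from the $B$-set axioms (B1)--(B3) and positivity.

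The main obstacle I anticipate is \emph{well-definedness and coherence across nodes}: the truth of $C(x; y, z)$ must not depend on which node $t$ one uses to test it, and one must check that as $t$ varies within $T_a$ the branch relationships are compatible. Concretely, if $[a]_t$ and $[a]_{t'}$ both exist with $t' < t$, then $[a]_{t'} \subseteq [a]_t$, and I must confirm that ``$[y]$ and $[z]$ separated from $[x]$ at the branch level'' is stable under moving down to $t'$ or up to $t$ wherever both are defined. This is exactly the sort of monotonicity recorded in the proof of Theorem \ref{5.3} (the relations $E_{s_1}\!\upharpoonright_{S_{s_2}} \subset E_{s_2}$ for $s_1 < s_2$, and the nesting of pre-branches), so I expect the coherence to reduce to the $g$-map compatibility $g_{su} = g_{tu} g_{st}$ from Lemma \ref{3.2} together with the characterization of when images under $g$ coalesce. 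Once a single canonical witnessing node is fixed via Lemma \ref{5.2}, the ambiguity disappears and the axiom checks become local computations in one $B_t$.

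Finally, $G_a$-invariance is the part I expect to be routine rather than obstructive: since every $g \in G_a$ fixes $a$, it permutes the nodes $T_a$ of $T$ at which $[a]_t$ is defined, and by the description of the $G$-action on $M$ recalled at the start of the proof of Theorem \ref{5.3} (an automorphism is determined by its action on $T$ and respects the $f$ and $g$ maps), it carries branches of $B_t$ at $[a]_t$ to branches of $B_{g(t)}$ at $[a]_{g(t)}$. Hence the defining condition for $C(x; y, z)$ is transported verbatim to $C(gx; gy, gz)$, giving invariance. The upshot is that $G_a$ preserves a $C$-relation on $M \setminus \{a\}$, which is property (viii) in the definition of a limit of betweenness relations, thereby advancing the overall goal of exhibiting $(G, M)$ as a Jordan group preserving such a limit.
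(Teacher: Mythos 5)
Your definition of $C$ is too restrictive, and as written it fails axiom (C4); this is a genuine gap, not merely the ``coherence across nodes'' issue you flag. Take distinct $a, x, y \in M$ whose unique witnessing node $t$ (Lemma \ref{5.2}) satisfies $B_t([x]_t; [a]_t, [y]_t)$, i.e.\ $x$ separates $a$ from $y$; such triples certainly exist in the limit structure. Then there is \emph{no} node $s$ at which $[x]_s$ and $[y]_s$ lie in different branches of $B_s$ at $[a]_s$: at $t$ itself, axiom (B2) rules out $B_t([a]_t; [x]_t, [y]_t)$, so $[x]_t$ and $[y]_t$ lie in the same branch at $[a]_t$; at nodes $s < t$ the triple $\{[a]_s, [x]_s, [y]_s\}$ is incomparable (by the proof of Lemma \ref{3.3}, a betweenness holds only at the maximal node of the chain of nodes where the three classes are defined and distinct), so again $[x]_s$ and $[y]_s$ are in the same branch at $[a]_s$; and above $t$ the three classes are never again simultaneously defined and distinct. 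Hence your relation makes $C(x; y, y)$ false for this pair even though $x \neq y$, so it is not a $C$-relation, and no amount of checking compatibility of the $g$-maps will repair that.

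The paper's proof avoids this by using a strictly larger family $\mathcal F$: \emph{all} pre-branches omitting $a$ of the pre-sets $S_t$ containing $a$ --- that is, branches at arbitrary nodes of $B_t$, not only at $[a]_t$ --- and by reading ``$x$ does not lie in $U$'' set-theoretically, so that it also covers the case $x \notin S_t$. In the configuration above, the pre-branch of $B_t$ at $[x]_t$ containing $y$ omits both $a$ and $x$, and so witnesses $C(x; y, y)$. The second difference concerns how the axioms are verified: rather than checking (C1)--(C4) locally at witnessing nodes (which is delicate, since (C3) involves four points whose relevant nodes can differ), the paper shows $\mathcal F$ has no ``typical pairs'' (any two members are disjoint or nested), by splitting into the cases of pre-branches in the same, comparable, or incomparable $B$-sets, and then invokes \cite{Adeleke4} Theorem 34.4, which converts any such invariant nested family of Jordan sets (Jordan by Corollary \ref{5.4}) into a $G_a$-invariant $C$-relation via exactly the definition ``$y, z \in U$ and $x \notin U$ for some $U \in \mathcal F$''. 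Your $G_a$-invariance argument is fine as far as it goes, but to complete the proof you would need both to enlarge the defining family as above and either to redo this nested-family analysis or to cite it.
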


\begin{proof} We follow the proof of Corollary 6.3 in \cite{Bhattacharjee}.

Let $\mathcal F$ be the set of all pre-branches omitting $a$, of the pre-sets containing $a$. By Corollary \ref{5.4} 
all members of $\mathcal F$ are Jordan sets, and by the proof given in \cite{Bhattacharjee}, which applies to our
situation, $\mathcal F$ has no pairs $\Gamma, \Delta$, for which $\Gamma \setminus \Delta$, $\Delta \setminus \Gamma$, 
and $\Gamma \cap \Delta$ are all non-empty (which there are called `typical pairs') as is seen by splitting into 
the cases that they are pre-branches in the same $B$-set, comparable $B$-sets, or incomparable $B$-sets. 

According to the analysis in \cite{Adeleke4}, Theorem 34.4, we are now able to find a $G_a$-invariant $C$-relation on 
$M \setminus \{a\}$ by letting $C(x; y, z)$ if there is a member of $\mathcal F$ which $y$ and $z$ both
lie in but $x$ does not.    \end{proof}

\begin{theorem} \label{5.8} The group $G$ preserves a limit of $B$-relations on $M$. \end{theorem}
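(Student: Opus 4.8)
The plan is to verify that $(G,M)$ satisfies the definition of ``preserves a limit of betweenness relations'' given in Section 2, by exhibiting the required data $(J,\le)$, $(\Gamma_j)$, $(H_j)$ and checking conditions (i)--(viii) directly. Almost all the ingredients have already been assembled in the preceding lemmas, so the proof is largely a matter of making the correct choices and matching them against the eight conditions.

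First I would fix a $C$-chain $\mathcal{J}$ of the ambient tree $T$, and take $J$ to be this chain with its inherited (reverse) ordering so that it has no least element, and for each $j\in J$ set $\Gamma_j = S_j$ (the pre-$B$-set at the node $j$) and $H_j = G_{(M\setminus S_j)}$. The nesting $i<j \Rightarrow \Gamma_i \supset \Gamma_j \wedge H_i \supset H_j$ follows from the observation, recorded before Lemma \ref{5.2}, that $s\le t$ in $T$ implies $S_s\supseteq S_t$. Condition (i) is then immediate from Corollary \ref{5.5}, which states that $S_j$ is a Jordan set, that $H_j$ is transitive on it, and that $E_j$ is \emph{the} (hence unique) maximal $G_j$-congruence on $S_j$; I set $\rho_j = E_j$. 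Condition (ii) is the final statement of Corollary \ref{5.5}, that $H_j$ induces a $2$-transitive group preserving a $B$-relation on $S_j/E_j = B(j)$, together with the fact from Lemma \ref{5.1}(iii) that this action is $2$-transitive but not $3$-transitive (a betweenness relation on more than three points is never preserved $3$-transitively). Condition (iii), $\bigcup_{j\in J}\Gamma_j = M$, holds because every point of $M$ lies in some $[a]_s$ and every node lies below a node of the $C$-chain $\mathcal{J}$, so $M=\bigcup_{j}S_j$.

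Next I would dispatch (iv) and (viii), which are the genuinely structural conditions. Condition (iv) asks that $(\bigcup_j H_j, M)=(G,M)$ be a $2$-primitive but not $3$-transitive Jordan group: $2$-primitivity is exactly Lemma \ref{5.6}, the Jordan property follows since, e.g., each $S_t$ is a Jordan set by Corollary \ref{5.5}, and non-$3$-transitivity follows from the $G_a$-invariant $C$-relation of Lemma \ref{5.7} (a $3$-transitive group admits no invariant $C$-relation on the complement of a point). Condition (viii) is precisely Lemma \ref{5.7}. Conditions (v) and (vi) concern the congruences: (v), that $i\ge j \Rightarrow \rho_i \supseteq \rho_j\!\upharpoonright_{\Gamma_i}$, is the monotonicity $E_{s_1}\!\upharpoonright S_{s_2}\subset E_{s_2}$ for $s_1<s_2$ established inside the proof of Theorem \ref{5.3}; and (vi), that $\bigcap_j\rho_j$ is equality on $M$ (viewing each $M\setminus\Gamma_j$ as a single $\rho_j$-class), follows from the identity $\{a\}=\bigcap_{s\in T_a}[a]_s$ proved at the start of Theorem \ref{5.3}.

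The step I expect to be the main obstacle is condition (vii), the ``coherence'' requirement that every $g\in G$ eventually permutes the chosen chain of Jordan sets among themselves: $(\forall g)(\exists i_0)(\forall i<i_0)(\exists j)(g(\Gamma_i)=\Gamma_j \wedge gH_ig^{-1}=H_j)$. Here the chosen $\Gamma_i = S_i$ correspond to nodes $i$ of the fixed $C$-chain $\mathcal{J}$, whereas $g$ acts as an automorphism of $T$ that need not preserve $\mathcal{J}$; so I would argue that for any $g$, the images $g(i)$ for $i\in\mathcal{J}$ form another chain in $T$, and that sufficiently far down (below $i_0$) this image chain agrees \emph{as a set of nodes, with matching colours} with a tail of some $C$-chain, because $g$ preserves colours and the tree below any point is eventually a single chain meeting $J$. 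Since $g(S_i)=S_{g(i)}$ and $g H_i g^{-1}=G_{(M\setminus g(S_i))}=H_{g(i)}$ whenever $g(i)$ is again one of our chosen nodes, it remains to check that for small enough $i$ the node $g(i)$ does lie on (a representative of) the chain defining the $\Gamma_j$'s; this is where the rigidity of the colouring and the structure of $T_C$ from Lemma \ref{3.1} must be invoked carefully. Having verified (i)--(viii), the definition in Section 2 is met and the theorem follows.
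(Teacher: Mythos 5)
Your overall strategy coincides with the paper's: you make exactly the same choices ($J$ a $C$-chain, $\Gamma_j = S_j$, $H_j = G_{(M\setminus S_j)}$, $\rho_j = E_j$) and cite essentially the same lemmas for (i), (ii), (iii), (v), (vi) and (viii). However, two of your steps do not hold up. The first is your argument for the ``not $3$-transitive'' half of condition (iv). The parenthetical claim that a $3$-transitive group admits no invariant $C$-relation on the complement of a point is false: $3$-transitivity of $\bigcup_j H_j$ on $M$ yields only $2$-transitivity of the stabilizer of $a$ on $M\setminus\{a\}$, and $2$-transitive groups can perfectly well preserve nontrivial $C$-relations (that is exactly how case 7 of the Adeleke--Macpherson list arises). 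Concretely, $\mathrm{PGL}(2,\mathbb{Q}_p)$ is sharply $3$-transitive on the projective line, yet the stabilizer of $\infty$, namely $\mathrm{AGL}(1,\mathbb{Q}_p)$ acting on $\mathbb{Q}_p$, preserves the $C$-relation $C(x;y,z) \Leftrightarrow v(y-z) > v(x-y)$ coming from the $p$-adic valuation. So Lemma \ref{5.7} cannot give you non-$3$-transitivity. What does work, and is available in the paper, is either the colour argument (made explicit in the proof of Lemma \ref{5.9}: each triple of distinct points is witnessed at a unique node, whose colour is preserved by every element of $G$, so even $3$-homogeneity fails), or the observation you already use for (ii), transferred to $M$: every element of $\bigcup_j H_j$ preserves $L$, and $L(a;b,c)$ excludes $L(b;a,c)$ (by uniqueness of the witness node and axiom (B2)), so no group element can carry $(a,b,c)$ to $(b,a,c)$.

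Second, condition (vii), which you yourself single out as the crux, is left genuinely unproven: ``this is where the rigidity of the colouring and the structure of $T_C$ from Lemma \ref{3.1} must be invoked carefully'' is a placeholder, not an argument. The missing step is short, and it is where the paper's proof does real work. Since all members of $G$ preserve colours, one can take $j=i$ in (vii). Given $g \in G$, fix any $t \in J$ and put $i_0 = t \curlywedge g(t)$, which lies in $J$ because $J$ is a maximal chain. Both $i_0$ and $g(i_0)$ lie below $g(t)$; the set of points of $T$ below $g(t)$ is linearly ordered, the colouring is strictly increasing (hence injective) along chains, and $F(g(i_0)) = F(i_0)$, so $g(i_0) = i_0$. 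Repeating the argument below $i_0$ shows that $g$ fixes $\{i \in J : i < i_0\}$ pointwise: for such $i$, both $i$ and $g(i)$ lie below $i_0$ and have the same colour. Hence $g(\Gamma_i)=\Gamma_i$ and $gH_ig^{-1}=H_i$ for all $i<i_0$, which is (vii). Two smaller points: in (ii) you should also record that the induced group on $\Gamma_j/\rho_j$ is a \emph{Jordan} group (the branches serve as Jordan sets, via Corollary \ref{5.4}); and in (iii) the fact you need is that every node of $T$ lies \emph{above} some node of the chosen $C$-chain (take a meet with any element of $J$ and use maximality of $J$ as a chain), not below one.
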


\begin{proof} We follow the method of \cite{Bhattacharjee}. We have a long list of conditions to verify, as given at the end of section 2. 

By Theorem \ref{5.3} $(G, M)$ is an infinite Jordan group. For $J$ we take a $C$-chain of $T$, and we note that as $T$ has no
least element, neither does $J$. For $t \in J$, let $(B(t), B_t)$ be the $B$-set in $A$ at $t$, and let $\Gamma_t$ be the corresponding 
pre-$B$-set (which equals $S_t$ in the earlier notation). Thus if $s < t$ in $J$, $\Gamma_s \supset \Gamma_t$. We let $H_t$ be 
the pointwise stabilizer in $G$ of $M \setminus \Gamma_t$. If $s < t$ in $J$, then it follows that
$M \setminus \Gamma_s \subset M \setminus \Gamma_t$, so $H_t \supset H_s$.

We verify the numbered conditions in turn:

(i) This follows from Lemma \ref{5.1}(ii) and Corollary \ref{5.5}.

(ii) The action of $H_t$ on $\Gamma_t/E_t$ is 2-transitive as may for instance be deduced from Lemma \ref{5.1}(iii). It
is not 3-transitive since a triple $(a, b, c)$ such that $b$ lies between $a$ and $c$ cannot be taken to a triple in which no
element lies between the other two. It is also a Jordan group with branches at any node being Jordan sets, and by Corollary \ref{5.5}
it preserves the betweenness relation $B_t$ on $\Gamma_t/E_t$.

(iii) This holds since every point of $M$ lies in some pre-set $\Gamma_t$.

(iv) First $(\bigcup_{t \in J}H_t, M)$ is a Jordan group. To see that it is transitive, take any $a$, $b \in M$. By (iii)
there is $t$ such that $a, b \in \Gamma_t$. By Corollary \ref{5.5}, $\Gamma_t$ is a Jordan set for $G$, so $H_t$ acts 
transitively on it, and therefore has a member taking $a$ to $b$, and hence so does the union. To see that it is a Jordan
group, we may take the same set $\Gamma_t$ as a Jordan set, for some fixed $t \in J$. Then as $\Gamma_t$ is a Jordan
set for $G$, $H_t$ acts transitively on $\Gamma_t$, and hence so does the pointwise stabilizer of $M \setminus \Gamma_t$ in
the union. 

To verify 2-primitivity, we use the argument of Lemma \ref{5.6}.

(v) follows from the fact that if $s \le t$ then $E_s \upharpoonright \Gamma_t \subseteq E_t$ as remarked during the proof of 
Lemma \ref{5.3}.

(vi) holds since distinct points of $M$ are separated far enough down $T$.

We note that in our case condition (vii) comes out in a simpler form, since necessarily $i = j$, as follows from the fact that all 
members of $G$ preserve colours, namely 

$(\forall g \in G)(\exists i_0 \in J)(\forall i < i_0)(g(\Gamma_i) = \Gamma_i \wedge gH_ig^{-1} = H_i)$.

To verify this, given $g$, take any $t \in J$, and let $i_0 = t \curlywedge g(t)$. Then $g(i_0) = i_0$ since $g(i_0)$ lies in the set
of points of $T$ below $g(t)$, which is linearly ordered, and its unique member of the same colour as $i_0$ is $i_0$. Hence $g$ fixes
the chain in $T$ below $i_0$ pointwise, and the conclusion follows easily. 

(viii) follows from Lemma \ref{5.7}.    \end{proof}

\begin{lemma} \label{5.9} There is no $G$-invariant linear order, linear or general betweenness relation, circular order, separation 
relation, semilinear order,  Steiner system, or $C$- or $D$-relation on $M$. \end{lemma}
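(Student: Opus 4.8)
**

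The plan is to show that the group $G$ preserves none of the listed structures by exploiting the fact that $G$ genuinely preserves a limit of betweenness relations (Theorem \ref{5.8}), which is known to be incompatible with each of these other structures. The cleanest strategy is to invoke the classification in \cite{Adeleke2} Theorem 1.0.1: the structures in that list are mutually exclusive for a primitive Jordan group, so once we have confirmed that $G$ is a primitive (indeed $2$-primitive) Jordan group that is not highly transitive, and that it preserves a limit of $B$-relations, it cannot simultaneously preserve any of the others. However, since the list in \cite{Adeleke2} is stated as a disjunction of possible structures rather than an explicit exclusivity statement, I would prefer to argue directly wherever a short transitivity or invariance obstruction is available, and fall back on the classification only for the residual cases.

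First I would dispose of the structures that impose too much transitivity or too little. A $G$-invariant linear order, circular order, or separation relation would force $G$ to act with only finitely many orbits on ordered tuples of a fixed length, and the $n$-transitivity properties we have established — via the $2$-primitivity of Lemma \ref{5.6} together with the abundance of Jordan sets — are incompatible with preserving such a rigid order: a $2$-transitive group preserves no linear order, and separation/circular orders likewise fall to $2$- or $3$-transitivity considerations. A Steiner system would require $G_a$ to preserve a nontrivial block structure arising from the blocks through $a$, contradicting the primitivity of $G_a$ established in Lemma \ref{5.6}. Thus $2$-primitivity kills the Steiner system case outright.

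Next I would treat the betweenness and semilinear cases, which are more delicate because $G$ is \emph{close} to preserving such relations locally. A $G$-invariant linear betweenness relation or general betweenness relation on all of $M$ would have to be compatible with the congruences $E_t$; but the essential feature of a genuine \emph{limit} of betweenness relations is that no single betweenness relation is globally invariant, precisely because the witnessing node in Lemma \ref{5.2} varies and the relations $B_t$ do not cohere into one relation on $M$. I would make this precise by exhibiting, for any putative global betweenness relation $B$, three points whose $B$-configuration is contradicted by applying an automorphism supplied by Theorem \ref{4.6}(v) that alters which node witnesses their $L$-relation. The semilinear order case is handled similarly: a $G$-invariant semilinear order would give $G$-invariant cones, but the transitivity of $G$ on pre-branches (Corollary \ref{5.4}) and the $2$-primitivity prevent any such invariant tree structure.

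The main obstacle will be the $C$-relation and $D$-relation cases, since Lemma \ref{5.7} shows that $G_a$ \emph{does} preserve a $C$-relation on $M \setminus \{a\}$ — the point is that no $C$- or $D$-relation is invariant under the \emph{full} group $G$, only under point stabilizers. The hard part is therefore to argue that transitivity of $G$ on $M$ (which follows from Theorem \ref{5.3} and Corollary \ref{5.5}) is incompatible with a globally invariant $C$- or $D$-relation: a $C$-relation singles out a distinguished asymmetry that a point-transitive, indeed $2$-primitive, group cannot respect, and one verifies using axiom (C2) that invariance under a $2$-transitive group forces the relation to be trivial. I expect the cleanest route is to show any $G$-invariant $C$-relation would descend to a $G$-invariant structure contradicting $2$-primitivity, and to dispatch the $D$-relation by the analogous argument together with the observation that our limiting structure is specifically a limit of $B$-relations and not of $D$-relations, as reflected in the $2$-transitive (not merely $2$-homogeneous) action on each $\Gamma_t/E_t$ recorded in the proof of Theorem \ref{5.8}(ii).
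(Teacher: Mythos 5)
Your proposal diverges from the paper's proof and has genuine gaps at exactly the points where the paper has to work hardest. The paper's argument is: (a) observe that $G$ is $2$-primitive but not even $3$-homogeneous, because for any three distinct points the colour of the unique node of $T$ witnessing their $L$-relation is a $G$-invariant of the unordered triple, and $G$ preserves colours; (b) invoke the finer classification, \cite{Adeleke2} Theorem 1.0.2 (not 1.0.1), whose clause for $2$-primitive, non-$3$-transitive Jordan groups leaves only a dense circular order, a $D$-relation, and limits of $B$- or $D$-relations as possibilities, thereby disposing of the linear order, both kinds of betweenness relation, the separation relation, semilinear order, $C$-relation and Steiner system all at once; (c) kill the circular order by exhibiting an automorphism fixing a point $a$ (and a node $t$) while transposing $b$ and $c$, chosen so that $[a]_t$, $[b]_t$, $[c]_t$ are incomparable in $B_t$ --- no group preserving a circular order can contain such a transposition; (d) refer the $D$-relation to the genuinely involved argument of \cite{Bhattacharjee} Lemma 6.6. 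Your proposal never establishes the failure of $3$-homogeneity, which is the one fact specific to this construction that makes the classification bite.

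The concrete failures are these. First, your claim that circular orders and separation relations ``fall to $2$- or $3$-transitivity considerations'' is false: the automorphism group of a dense circular order is $2$-transitive (and that of a separation relation is even $3$-transitive), so $2$-primitivity of $G$ yields no contradiction, and $G$ is not $3$-transitive, so no transitivity hypothesis is available; this is precisely why the paper needs the explicit transposition argument for the circular order and the classification theorem for the separation relation. Second, your plan for the $D$-relation --- descend to a $G_a$-invariant structure contradicting $2$-primitivity --- cannot succeed: the canonical structure that a $D$-relation induces on $M \setminus \{a\}$ is a $C$-relation, and Lemma \ref{5.7} shows that $G_a$ really does preserve such a $C$-relation, so no conflict with primitivity of $G_a$ can arise at that level (one would need $3$-primitivity, which $G$ does not have). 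This is exactly why the paper defers to the longer argument of \cite{Bhattacharjee} Lemma 6.6 rather than using any descent. Third, your fallback reading of Theorem 1.0.1 as a mutual-exclusivity statement is, as you yourself suspect, invalid: the listed structures are not mutually exclusive (a group preserving a dense linear order also preserves the derived linear betweenness relation, for instance). The parts of your proposal that do work --- linear and semilinear orders via $2$-transitivity, Steiner systems via primitivity of $G_a$ on the blocks through $a$, betweenness and $C$-relations via primitivity of $G_a$ on branches or cones at $a$ --- cover only the cases that the paper's appeal to the classification handles for free.
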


\begin{proof} We refer to \cite{Adeleke2} Theorem 1.0.2, which lists the options for an infinite primitive Jordan group which is not 
highly transitive. We note that our permutation group $G$ is 2-primitive but not 3-transitive. In fact it isn't even 3-homogeneous
(unlike the group constructed in \cite{Bhattacharjee}). To see this, consider any three distinct members $a$, $b$, $c$ of $M$. Then
there is a unique $t \in T$ such that $[a]_t$, $[b]_t$, and $[c]_t$ exist, and one lies between the other two in $B_t$. It is 
impossible to take this triple $\{a, b, c\}$ to another triple for which the corresponding member of $T$ has a different colour, since
all members of $G$ are colour preserving.

Looking at the six clauses in \cite{Adeleke2} Theorem 1.0.2, we see that the only one which applies to this situation is (iv), for
which the possibilities listed are a dense circular order, a $D$-relation, and limits of $B$- or $D$-relations. To rule out a 
circular ordering, note that it is not possible to fix a point $a$, and interchange two others, $b$ and $c$, while preserving a 
circular ordering. However, in our structure, it is possible to fix a point $a$, and a tree vertex $t$, while choosing $b$ and $c$ 
so that no triple containing all of $[a]_t$, $[b]_t$ and $[c]_t$ are related in the $B$-relation $B_t$. Then (as with the proofs in 
Lemma \ref{5.1}) it is clear (by Theorem \ref{4.6}) that there is an automorphism fixing $a$ and $t$ and switching $b$ and $c$. The 
argument to show that a $D$-relation cannot arise is more complicated, but is just the same as in \cite{Bhattacharjee} Lemma 6.6, 
so we do not go over the details.   \end{proof}

The other case in the list, namely a limit of $D$-relations, is presumably also not preserved by  $G$, though we have not verified 
this (and we note that this is also not done in \cite{Bhattacharjee} or \cite{Adeleke1}).

\section{Connection with Adeleke's construction, and recovery of the tree}

In this section we show that Adeleke's construction \cite{Adeleke1} is a special case of the one given in this paper, and we discuss 
recognizing the tree inside the permutation group $G = {\rm Aut}(M)$, which is required in order to know that we have $2^{\aleph_0}$
non-isomorphic examples.

The key step in Adeleke's construction is the passage from a betweenness relation $(\Omega, B)$ on an infinite set $\Omega$ 
to another betweenness relation $(\Omega^+, B^+)$, which is an extension of $(\Omega, B)$. In fact he phrases everything 
in terms of a group that respects the original configuration, but what he does amounts to dealing with sets carrying 
betweenness relations (which are then respected by the relevant group).

The definition of $\Omega^+$ in terms of $\Omega$ is as follows. It consists of all finite sequences of the form
$q_1 \omega_1 q_2 \omega_2 \ldots q_{k-1} \omega_{k-1} q_k$ for $k \ge 1$, $q_1 > q_2 > \ldots > q_k$ in $\mathbb Q$, and 
$\omega_1, \ldots, \omega_k$ members of $\Omega \setminus \{\alpha, \beta\}$ where $\alpha$ and $\beta$ are fixed and distinct 
members of $\Omega$. Since the group acting on $\Omega$ will be 2-transitive, it doesn't matter which $\alpha$ and $\beta$ are
taken. This family is then turned into an upper semilinearly ordered set by letting 
$p_1 \eta_1 p_2 \eta_2 \ldots p_{k-1} \eta_{k-1} p_k \le q_1 \omega_1 q_2 \omega_2 \ldots q_{l-1} \omega_{l-1} q_l$ if $l \le k$,
$p_i = q_i$ and $\eta_i = \omega_i$ for $1 \le i < l$, and $p_l \le q_l$. 

To gain an intuition for what this ordering looks like, we observe that 
$q_1 > q_1 \omega_1 q_2 > q_1 \omega_1 q_2 \omega_2 q_3 > \ldots > q_1 \omega_1 q_2 \omega_2 \ldots q_{k-1} \omega_{k-1} q_k$. Also,
$q_1 \omega_1 q_2 \omega_2 \ldots q_{k-1} \omega_{k-1} q_k \ge q_1 \omega_1 q_2 \omega_2 \ldots q_{k-1} \omega_{k-1} q_k'$ if
$q_k \ge q_k'$. From the definition we can verify that it is indeed an upper semilinear order (meaning that the set of points above any 
particular point is linearly ordered). Furthermore, in the terminology of \cite{Droste}, it is a tree (more accurately, an
`upside-down' tree) in which all maximal chains are isomorphic to $\mathbb Q$. We also verify that it is of `positive type', meaning 
that it ramifies precisely at the points of the structure. To see this, one notes that if $\eta \neq \eta'$ in 
$\Omega \setminus \{\alpha, \beta\}$, then $q_1 \omega_1 q_2 \omega_2 \ldots q_{k-1} \omega_{k-1} q_k$ is the least upper bound of
$q_1 \omega_1 q_2 \omega_2 \ldots q_{k-1} \omega_{k-1} q_k \eta q_{k+1}$ and 
$q_1 \omega_1 q_2 \omega_2 \ldots q_{k-1} \omega_{k-1} q_k \eta' q_{k+1}$. Thus it is actually a 2-homogeneous tree of `positive type',
in the terminology of \cite{Droste}, which means that we already have a very good visualization of what it looks like.

Next we examine how it ramifies, and explain why the two points $\alpha$ and $\beta$ were omitted. We already `know' about the 
ramification of $q_1 \omega_1 q_2 \omega_2 \ldots q_{k-1} \omega_{k-1} q_k$ to 
$q_1 \omega_1 q_2 \omega_2 \ldots q_{k-1} \omega_{k-1} q_k \eta q_{k+1}$ by using a member $\eta$ of $\Omega \setminus \{\alpha, \beta\}$.
All other points of $\Omega^+$ below $q_1 \omega_1 q_2 \omega_2 \ldots q_{k-1} \omega_{k-1} q_k$ are of the form
$q_1 \omega_1 q_2 \omega_2 \ldots q_{k-1} \omega_{k-1} q_k' \omega_k q_{k+1} \ldots q_l$ for some $q_k' < q_k$. The set of
such elements forms a cone at $q_1 \omega_1 q_2 \omega_2 \ldots q_{k-1} \omega_{k-1} q_k$, and in \cite{Adeleke1} this
is indexed by $\beta$. Finally, since we are interested not so much in the semilinear order, as the betweenness relation
thereby determined, we should also take account of the upwards direction, corresponding to $\alpha$. 

To sum up, $\Omega^+$ is the set of sequences just described, and $B^+$ is the betweenness relation on $\Omega^+$ derived
from the semilinear order. This betweenness relation is of positive type, it is countable and dense without leaves, and at
each node it ramifies, where the cones (which should now be called `branches' in the light of our usage earlier in the paper) 
are naturally indexed by $\Omega$.

It should now be clear that we are approaching the set-up in the earlier parts of the paper, and in fact we can derive a
tree of $B$-sets of height 2. At the root is placed $(\Omega^+, B^+)$. Each node $x$ of $\Omega^+$ gives rise to some $y$ on 
the next level up, and the points in the $B$-set at $y$ are precisely the branches of $B^+$ at $x$. In other words, a copy
of $\Omega$, which is endowed with $B$ as its betweenness relation. 

Next, Adeleke iterates the construction. So clearly for us this means that the tree of $B$-sets is extended downwards, in
$\omega$ steps. The resulting tree therefore has the form $T_C$ where $C$ is the chain $\omega^*$. In \cite{Bradley} this
is called the ${\mathbb N}^+$-tree. Note once again that there is a small difference in the treatment here (and in 
\cite{Bhattacharjee}) and in \cite{Bradley}. For us, the `top' level can be any linear betweenness relation, which matches 
up with Adeleke's  `top level' (he starts with the usual linear betweenness relation on $\mathbb Q$), whereas in \cite{Bradley} 
the tree of $B$-sets has two extra levels above this point, having a single point at the leaves, and two points one down.

Finally, we note that Adeleke spends quite a lot of time constructing groups at each stage. Thus he starts with $H$ on 
$\Omega$, which is assumed to preserve the betweenness relation there, and have various transitivity properties, and this
is extended to $H^+$ acting on $\Omega^+$. The groups that he uses are (presumably) not the whole automorphism groups of 
the structure up to that point, since at each stage, there is just a limited set of group elements added in forming the 
extension. If however, we always work with the group of {\em all} the automorphisms of the betweenness relation, then 
we don't need to study individual elements in such detail. The relevant transitivity properties have already been 
established in the previous section.

In summary, this shows that the construction we have presented extends and generalizes both the method of Bhattacharjee 
and Macpherson in \cite{Bhattacharjee}, and that of Adeleke in \cite{Adeleke1}.

\vspace{.05in}

Finally we remark that there is a wide variety of variants of the construction, obtained by taking different values for
the chain $C$. There are certainly $2^{\aleph_0}$ possibilities, which one may realize for instance by taking combinations
of $\mathbb Z$ and $\mathbb Q$. We now show that these really are distinct, by recovering (in a strong enough sense) the 
underlying tree from the permutation group. The first step is to show how to recover the $L$-relation from the permutation group.

\begin{lemma} \label{6.1} Let $G$ be the automorphism group of the tree of $B$-sets $A$ as given in Theorem \ref{4.6}, 
viewed as a permutation group of the domain $M = M^A$ of the corresponding $L$-set. Then $L$ is uniquely expressible
as the union of the family of $G$-orbits of ordered triples of distinct elements of $M$ which are symmetric in
their second and third co-ordinates.  \end{lemma}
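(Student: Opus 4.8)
The plan is to characterise $L$ group-theoretically through the symmetry of the relevant orbits. The key preliminary observation is that $L$ is $G$-invariant (since $G = \mathrm{Aut}(A)$ preserves $L$) and symmetric in its last two arguments (by axiom (B1) for each $B_t$). Consequently $L$, regarded as a set of ordered triples of distinct elements of $M$, is a union of whole $G$-orbits; and since the $G$-orbits of such triples partition the set of \emph{all} ordered triples of distinct elements, any expression of $L$ as a union of orbits is automatically unique. It therefore remains only to identify \emph{which} orbits occur, and the assertion is that these are precisely the orbits that are symmetric in their second and third coordinates (i.e.\ closed under $(x,y,z)\mapsto(x,z,y)$). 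I would prove this by establishing the two matching inclusions, using the homogeneity of $A$ (Theorem~\ref{4.6}(v)) in one direction and the uniqueness of the `between' node (Lemma~\ref{5.2}) in the other.

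For the first inclusion, suppose $L(a;b,c)$, witnessed at the node $t$, so that $[a]_t,[b]_t,[c]_t$ are distinct and $B_t([a]_t;[b]_t,[c]_t)$. The three points $a,b,c$ determine a finite strong substructure of $A$ whose root $B$-set is the three-point linear betweenness relation on $[a]_t,[b]_t,[c]_t$ at $t$ (this is a strong sub-$B$-set because the triple is comparable, so the centroid condition is vacuous). Since every $B$-relation satisfies (B1), the transposition $\sigma$ fixing $a$ and interchanging $b$ and $c$ is an isomorphism of this finite substructure onto itself; by Theorem~\ref{4.6}(v) it extends to some $g \in G$ with $g(a)=a$, $g(b)=c$, $g(c)=b$. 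Hence $(a,c,b)$ lies in the $G$-orbit of $(a,b,c)$, so that orbit is symmetric in its second and third coordinates. Thus every orbit contained in $L$ is of the required type.

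For the reverse inclusion, suppose the $G$-orbit of a triple $(a,b,c)$ of distinct elements is symmetric in its second and third coordinates, so there is $g \in G$ with $g(a)=a$, $g(b)=c$, $g(c)=b$. By Lemma~\ref{5.2} exactly one of $a,b,c$ is the middle element, the one lying between the other two in $L$. Since $g$ preserves $L$ and fixes the set $\{a,b,c\}$ setwise, it must carry the middle element of $\{a,b,c\}$ to the middle element of $g(\{a,b,c\}) = \{a,b,c\}$, which by uniqueness is the same point; as $a$ is the only one of the three fixed by $g$, the middle element is $a$, that is $L(a;b,c)$. Combining the two inclusions, $L$ is exactly the union of those $G$-orbits of ordered triples of distinct elements that are symmetric in their last two coordinates, and by the remark on uniqueness of orbit decompositions this expression is unique. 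I expect the main obstacle to be the first inclusion: one must check carefully that the reflection fixing $a$ and swapping $b,c$ genuinely arises from an isomorphism of a finite \emph{strong} substructure, so that Theorem~\ref{4.6}(v) may legitimately be applied; the reverse inclusion is then a short formal consequence of Lemma~\ref{5.2}.
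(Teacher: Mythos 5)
Your overall strategy is the same as the paper's: the reverse inclusion via the uniqueness of the middle element (Lemma \ref{5.2}) is exactly the paper's observation that of the three possible orbits of a triple only the one on which $L$ holds in the first coordinate can be symmetric in the last two, and both arguments appeal to homogeneity for the other direction. Your reverse inclusion and the remark that a union of orbits is automatically a unique such union are sound. The genuine gap is in the forward inclusion, and it is not where you predicted. The single-node structure on $\{[a]_t,[b]_t,[c]_t\}$ \emph{is} a finite strong substructure of $A$ (the triple is comparable, so the centroid condition is vacuous, as you say), and the transposition is a legitimate internal arboreal isomorphism, so Theorem \ref{4.6}(v) applies. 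But what it produces is an automorphism $g$ of the tree of $B$-sets whose behaviour is prescribed only on the \emph{classes}: $g([a]_t)=[a]_t$, $g([b]_t)=[c]_t$, $g([c]_t)=[b]_t$ as subsets of $M$. This does not give $g(a)=a$, $g(b)=c$, $g(c)=b$ as points of $M$. A point of $M$ is determined only by the nested family of classes $[\,\cdot\,]_s$ for $s$ ranging cofinally far down $T$ (the paper records $\{a\}=\bigcap_{s\in T_a}[a]_s$ in the proof of Theorem \ref{5.3}), and a finite substructure controls nothing below its own root; all you may conclude is $g(a)\in[a]_t$, $g(b)\in[c]_t$, $g(c)\in[b]_t$. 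This class-versus-point distinction is precisely the difficulty that forces the gluing construction along a coinitial chain in the paper's proof of Theorem \ref{5.3}, so the step as you state it would fail.

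The gap is repairable with the paper's own tools. Since $[a]_t$, $[b]_t$, $[c]_t$ are pairwise disjoint Jordan sets (Theorem \ref{5.3}), choose $h_1\in G_{(M\setminus[a]_t)}$ with $h_1(g(a))=a$, $h_2\in G_{(M\setminus[c]_t)}$ with $h_2(g(b))=c$, and $h_3\in G_{(M\setminus[b]_t)}$ with $h_3(g(c))=b$; by disjointness these corrections do not interfere with one another, and $h_3h_2h_1g$ fixes $a$ and interchanges $b$ and $c$, which is what the symmetry of the orbit requires. (To be fair, the paper's own proof is also terse at this point: it cites Theorem \ref{4.6}(v) for the claim that triples witnessed at a common node with the same middle position lie in one $G$-orbit, which likewise concerns points rather than classes; but it has Lemma \ref{5.1}(iii) and Theorem \ref{5.3} in place to make that legitimate, whereas your write-up asserts the pointwise statement as a direct consequence of Theorem \ref{4.6}(v), which it is not.)
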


\begin{proof} Here we are regarding $L$ as the set of ordered triples $(a, b, c)$ of distinct elements of $M$ such that
$L(a; b, c)$. By the properties of $L$-relations, $L(a; b, c) \leftrightarrow L(a; c, b)$. By Lemma \ref{5.2}, for any 
distinct $a$, $b$, $c$ in $M$ there is a unique $t \in T$ such that $[a]_t$, $[b]_t$, and $[c]_t$ are defined and distinct, 
and one lies between the other two in $B_t$, and we write $t = t(a, b, c)$. We note that the colour $F(t)$ is preserved
by $G$, and furthermore, by Theorem \ref{4.6}(v), any two ordered triples of the form $(a_1, b_1, c_1)$, $(a_2, b_2, c_2)$ 
for which $t = t(a_1, b_1, c_1) = t(a_2, b_2, c_2)$ and $B_t(a_1; b_1, c_1)$, $B_t(a_2; b_2, c_2)$ lie in the same
$G$-orbit. 

Let $X^t = \{(a, b, c): a, b, c \mbox{ are distinct members of } M \mbox{ and } t(a, b, c) = t\}$. Then the above remarks
show that $X^t$ is the disjoint union of three $G$-orbits, which are $X^t_1 = \{(a, b, c) \in X^t: L(a; b, c)\}$,
$X^t_2 = \{(a, b, c) \in X^t: L(b; a, c)\}$, and $X^t_3 = \{(a, b, c) \in X^t: L(c; a, b)\}$. Of these three orbits, only
the first is symmetric between the second and third co-ordinates, since for example $L(b; a, c)$ holding would mean that 
$b$ is between $a$ and $c$ in $B_t$, which is certainly incompatible with $c$ being between $a$ and $b$. This shows that 
$X^t_1$ is the unique one of these three orbits which is symmetrical between the second and third co-ordinates, and this is
a restriction of $L$. Taking the union over all possibles values of $F(t)$ gives the desired statement. 
 \end{proof}

\begin{theorem}  \label{6.2} For any countable chains $C_1$ and $C_2$ without least elements, if $A_1$ and $A_2$
are the trees of $B$-sets resulting from our construction applied to the chains $C_1$ and $C_2$ respectively as colour 
sets, and the permutation groups $G_1 = {\rm Aut}(A_1)$ and $G_2 = {\rm Aut}(A_2)$ are isomorphic in their
actions on $M_1$ and $M_2$, then $T_{C_1} \cong T_{C_2}$ and $C_1 \cong C_2$.  \end{theorem}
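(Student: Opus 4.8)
The plan is to convert the assumed permutation isomorphism of $(G_1, M_1)$ and $(G_2, M_2)$ first into an isomorphism of the associated $L$-structures, and then to reconstruct each (coloured) structure tree canonically from its $L$-structure, so that the $L$-isomorphism descends to a tree isomorphism. First I would spell out the hypothesis: it supplies a bijection $\theta \colon M_1 \to M_2$ together with a group isomorphism $\Phi \colon G_1 \to G_2$ satisfying $\theta(g \cdot x) = \Phi(g) \cdot \theta(x)$ for all $g \in G_1$ and $x \in M_1$. Such a $\theta$ carries $G_1$-orbits of ordered triples to $G_2$-orbits, and it clearly preserves the properties of being a triple of distinct elements and of being symmetric in the second and third coordinates. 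Hence, by the intrinsic description of $L$ given in Lemma \ref{6.1}, namely as the \emph{unique} union of $G$-orbits of distinct triples symmetric in the last two coordinates, we get $\theta(L_1) = L_2$; that is, $\theta$ is an isomorphism of $L$-structures from $(M_1, L_1)$ onto $(M_2, L_2)$.

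The second step is to recover the structure tree from the $L$-relation alone, following the reconstruction of \cite{Bhattacharjee}. For distinct $a, b, c$ with $L(a; b, c)$, Lemma \ref{5.2} furnishes a unique node $t = t(a,b,c) \in T$ at which the relation is witnessed, and the associated set $S_{abc} = S_t = \bigcup\{[d]_t : d \in M\}$ is then specified purely in terms of $L$ and the triple. I would check, exactly as in \cite{Bhattacharjee}, that the family of all such sets $S_t$, partially ordered by reverse inclusion (so that $s \le t \Leftrightarrow S_s \supseteq S_t$, as already remarked in Section 5), is order-isomorphic to $(T, \le)$. Every node is realised in this way: since each $B(t)$ is infinite it carries distinct collinear triples, and the uniqueness clause of Lemma \ref{5.2} forces such a triple to be witnessed precisely at $t$. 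Because $\theta$ is an $L$-isomorphism, it maps each $S_{abc}$ in $M_1$ onto $S_{\theta(a)\theta(b)\theta(c)}$ in $M_2$, and so induces an inclusion-reversing bijection between the two families of node-sets, i.e. a tree isomorphism $T_{C_1} \to T_{C_2}$.

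It remains to verify that this bijection also respects colours, so that the trees are isomorphic as \emph{coloured} trees. Here I would bring the group back in. By Lemma \ref{5.1}(i), $G$ is transitive on the nodes of any fixed colour, while every automorphism preserves colours; consequently the colour classes of $T$ are exactly the $G$-orbits of nodes, and the linear order on colours is the one induced by $\le$. Since $\theta$ conjugates $G_1$ to $G_2$ via $\Phi$, it sends $G_1$-orbits of nodes to $G_2$-orbits, hence colour classes to colour classes, compatibly with the induced order. Therefore the tree isomorphism built above carries $F_1$ to $F_2$, and $T_{C_1} \cong T_{C_2}$ as coloured trees.

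The main obstacle is the reconstruction in the second step: one must confirm that the whole semilinear order $(T, \le)$, and not merely its nodes, is faithfully encoded in $(M, L)$, i.e. that the $S_{abc}$-machinery of \cite{Bhattacharjee} genuinely goes through in our coloured, non-$\aleph_0$-categorical limit. The delicate points are the injectivity of $t \mapsto S_t$ and the fact that reverse inclusion recovers exactly $\le$ even between incomparable nodes (where the sets $S_s$, $S_t$ may still overlap, since a single element of $M$ can survive into several branches); these are inherited from the finite approximations via Lemma \ref{5.2} but must be stated for the limit rather than merely quoted. By contrast the first and third steps are essentially formal once Lemmas \ref{6.1} and \ref{5.1}(i) are in hand.
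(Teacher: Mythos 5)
Your first step (transporting $L$ across the permutation isomorphism via Lemma \ref{6.1}) is exactly the paper's, and your closing paragraph on colours is a reasonable supplement. The gap is in your second step: the claim that the tree order is recovered by reverse inclusion of pre-$B$-sets, i.e.\ that $s \le t \Leftrightarrow S_s \supseteq S_t$. Section 5 does not assert this equivalence for our structure: it records only that Bhattacharjee \emph{defines} the order this way in his uncoloured, fully generic setting, and that here $s \le t \Rightarrow S_s \supseteq S_t$. The converse fails whenever the colour chain $C$ has a pair of consecutive colours --- which includes Adeleke's case $C = \omega^*$ and the $\mathbb{Z}$/$\mathbb{Q}$ combinations invoked for Corollary \ref{6.3}. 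Indeed, let $u$ have successors $s$ and $v'$ in different cones (consecutive colours force each cone at a node to have a least element), and put $[c]_u = f_u(s)$. In the limit structure every vertex of $B(v')$ has ramification order $\aleph_0$, so $p = g_{uv'}([c]_u)$ is a ramification point of $B(v')$, hence $p = f_{v'}(t)$ for some successor $t$ of $v'$. Now $s$ and $t$ are incomparable; but the only $E_u$-class deleted in passing from $S_u$ to $S_s$ is $[c]_u$, whereas the classes deleted in passing from $S_u$ to $S_t$ are $f_u(t)$ together with all of $g_{uv'}^{-1}(p)$, which contains $[c]_u$. Hence $S_t \subsetneq S_s$, and your reconstruction would wrongly declare $s \le t$; the bijection of node-sets you obtain therefore need not be an order isomorphism. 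This configuration lives in the limit structure itself (by Theorem \ref{4.6} and the infinite ramification of every $B$-set), so it cannot be argued away by appeal to the finite approximations, contrary to your suggestion that the needed facts are ``inherited via Lemma \ref{5.2}.''

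This is precisely why the paper does \emph{not} use reverse inclusion but a strictly stronger, $L$-definable condition: $s \le t$ if and only if for all $a, b \in S_t$ with $[a]_t \ne [b]_t$, both $a$ and $b$ lie in $S_s$ and $[a]_s \ne [b]_s$ (the paper explicitly flags that this strengthens $S_t \subseteq S_s$). Expressing it requires not only the sets $S_{xyz}$ (Bhattacharjee's formula $P$) but also the equivalence relations $E_t$ (his formula (D)), and the verification that it characterizes $\le$ splits into the cases $t < s$ and $s, t$ incomparable, exploiting the branch structure at $f_u(s)$ and $f_u(t)$. Your outline can be repaired by substituting this criterion for reverse inclusion; as it stands, the central reconstruction step fails on exactly the class of trees the theorem is chiefly about.
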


\begin{proof} The hypothesis assures us that there is a bijection $\theta$ from $M_1$ to $M_2$ which induces an isomorphism 
from $(G_1, M_1)$ to $(G_2, M_2)$. By Lemma \ref{6.1} $L_1$ is uniquely expressible as the union of $G_1$-orbits of ordered
triples of distinct elements of $M_1$ which are symmetric in the 2nd and 3rd co-ordinates, with a similar statement
for $L_2$, and it follows that $\theta$ is also an isomorphism of $L$-structures.

The rest of the proof consists in constructing an interpretation of the trees of $B$-sets $A_1$ and $A_2$ from the 
corresponding $L$-relations, using an adaptation of the method given in \cite{Bhattacharjee}, and it will follow that
$\theta$ induces the desired isomorphism from $A_1$ to $A_2$. The method given is quite long and complicated, so we shall 
just focus on the main points and refer the reader to \cite{Bhattacharjee} pages 75-79 for full details. We work now 
with just {\em one} $L$-relation on a set $M$ arising from a tree of $B$-sets $A$.

The interpretation is done using triples $(x, y, z)$ of elements of $M$ which satisfy $L$, and the first stage is how 
to represent the pre-set $S_{xyz} = S_{t(x, y, z)}$ of all elements of $M$ which lie in the pre-set where $L(x; y, z)$ 
is witnessed (here we are using the notation of Lemma \ref{6.1}). For this, in \cite{Bhattacharjee} a formula 
$P(x, y, z, u, v, w)$ in six variables is given (involving $L$, and a quaternary predicate $L'$, which is however 
definable from $L$) which characterizes $t(x, y, z) = t(u, v, w)$. It is then remarked that $S_{xyz}$ is precisely 
equal to the set of all $w \in M$ for which $P(x, y, z, w, y, z) \vee P(x, y, z, x, w, y) \vee P(x, y, z, x, w, z)$.  

This recovers the tree $T$, and it remains to recover its ordering. The observation here is that $s \le t$ if and only
if for any $a$ and $b$ in $S_t$, if $[a]_t$ and $[b]_t$ are distinct, then $a$ and $b$ lie in $S_s$ and $[a]_s$ and
$[b]_s$ are also distinct (note that this is a strengthening of the condition that $S_t \subseteq S_s$). To carry this
out we have to know how to express $[a]_t \neq [b]_t$ for members $a$ and $b$ of $S_t$. This is also done in 
\cite{Bhattacharjee} page 76, where under (D), a formula is given to define the equivalence relation $E_t$ (or
$E_{xyz}$ as it is written there), so what is desired is indeed expressible. To verify the stated equivalence, first 
note that we have shown in section 5 that if $s \le t$ then $S_t \subseteq S_s$, and if $[a]_t$ and $[b]_t$ are distinct, 
then so are $[a]_s$ and $[b]_s$, since $[a]_s = [b]_s \Rightarrow [a]_t = g_{st}[a]_s = g_{st}[b]_s = [b]_t$. Now 
suppose that $s \not \le t$. If $t < s$ let $[c]_t = f_t(s)$, and pick distinct $[a]_t$, $[b]_t$ lying in the same 
branch of $B_t$ at $[c]_t$. Then $[a]_s = [b]_s$ but $[a]_t \neq [b]_t$. If however $s$ and $t$ are incomparable, 
let $u = s \curlywedge t$. Then $u < s, t$. Let $[c]_u = f_u(s)$ and pick distinct $[a]_t$ and $[b]_t$ in $B(t)$,
both unequal to $[c]_t$ (if this exists). Then $[a]_u$, $[b]_u$ and $[c]_u$ lie in distinct branches of $B_u$ at
$[d]_u = f_u(t)$, and we deduce that $[a]_u$ and $[b]_u$ lie in the same branch of $B_u$ at $[c]_u$, giving $[a]_s = [b]_s$
and $[a]_t \neq [b]_t$.

The fact that $C_1 \cong C_2$ follows since these are just (isomorphic to) the sets of levels of the two trees, ordered as induced from the ordering on the trees recovered above. (One can obtain the sets of levels directly as the families of orbits of the two permutation groups; however this does not of itself enable us to recover their ordering.)
\end{proof}

\begin{corollary} \label{6.3} There are $2^{\aleph_0}$ pairwise non-isomorphic (as permutation groups) irregular primitive Jordan groups on 
countable sets which preserve limits of betweenness relations.    \end{corollary}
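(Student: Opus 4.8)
The plan is to run the entire construction once for each admissible colour chain and to separate the resulting permutation groups by means of Theorem \ref{6.2}. Fix a countable chain $C$ with no least element. The construction of Theorem \ref{4.6} then produces a tree of $B$-sets $A_C$ over $T_C$, and $G_C = \mathrm{Aut}(A_C)$ acts faithfully on the countable set $M_C = M^{A_C}$. By Theorem \ref{5.8} this action preserves a limit of betweenness relations, by Lemma \ref{5.6} it is $2$-primitive and hence primitive, and by Lemma \ref{5.9} it is irregular, preserving none of the other structures in the list of \cite{Adeleke2}. Thus every $G_C$ is of exactly the required kind, and the whole task reduces to producing $2^{\aleph_0}$ choices of $C$ for which the groups $G_C$ are pairwise non-isomorphic as permutation groups.

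First I would reduce non-isomorphism of the groups to non-isomorphism of the colour chains. Suppose that $G_{C_1}$ and $G_{C_2}$ are isomorphic in their actions on $M_{C_1}$ and $M_{C_2}$. Then Theorem \ref{6.2} yields $T_{C_1} \cong T_{C_2}$. This reconstructed isomorphism respects the grading by colours: as observed in the proof of Lemma \ref{6.1}, the colour $F(t)$ of a node is an invariant of the $G$-orbit of the triples witnessed at $t$, so it is carried along by the interpretation, and the isomorphism therefore induces an order-isomorphism of the two colour sets. Concretely, every point of $T_C$ lies in a $C$-chain (Lemma \ref{3.1}), the $C$-chains are precisely the maximal chains realising all colours, and on each such chain $F$ is an order-isomorphism onto $C$; a colour-respecting tree isomorphism therefore sends $C_1$-chains to $C_2$-chains and forces $C_1 \cong C_2$. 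Consequently non-isomorphic colour chains yield non-isomorphic permutation groups.

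It then remains to exhibit $2^{\aleph_0}$ pairwise non-isomorphic countable chains with no least element; this is purely a fact about linear orders. As the remark preceding the corollary suggests, one may index by subsets $X \subseteq \omega$ and arrange, in an $\omega^{*}$-pattern (block $0$ on top, descending without a bottom so that no least element appears), a sequence of blocks whose $n$th block, counted from the top, is a copy of $\mathbb{Z}$ when $n \in X$ and a copy of $\mathbb{Q}$ when $n \notin X$, with a recognizable marker point inserted between consecutive blocks. Because $\mathbb{Q}$ is dense while $\mathbb{Z}$ is discrete, the sequence of block types between successive markers is an isomorphism invariant, so $X \mapsto C_X$ is injective up to isomorphism and yields continuum-many pairwise non-isomorphic chains, none with a least element. (That there are $2^{\aleph_0}$ countable order types at all is of course classical, going back to Sierpi\'nski.) Feeding these chains into the previous two paragraphs produces $2^{\aleph_0}$ pairwise non-isomorphic examples of the asserted type, on countable sets.

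The substantive work has already been carried out in Theorem \ref{6.2}, which recovers the ambient tree from the bare permutation group; granted that, the corollary is largely bookkeeping. The one step I expect to need genuine care is the verification that the reconstruction of Theorem \ref{6.2} respects colours, so that the conclusion $T_{C_1} \cong T_{C_2}$ really upgrades to $C_1 \cong C_2$ rather than to a mere isomorphism of the underlying uncoloured trees. This is where I would argue most carefully, using that colours are $G$-invariant and that each $C$-chain recovers $C$ up to isomorphism.
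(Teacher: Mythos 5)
Your overall route is the paper's: verify that each $G_C$ is an irregular primitive Jordan group on a countable set preserving a limit of betweenness relations (Theorem \ref{5.8}, Lemmas \ref{5.6} and \ref{5.9}), separate the groups via Theorem \ref{6.2}, and then count colour chains. Where you differ is in inserting an explicit argument that a permutation-group isomorphism forces $C_1 \cong C_2$ rather than merely $T_{C_1} \cong T_{C_2}$, and you are right that this is the step needing genuine care: the paper's one-line proof tacitly assumes that non-isomorphic chains yield non-isomorphic trees, which is not obvious, since an isomorphism of the bare semilinear orders need not respect colours (the paper itself notes after Lemma \ref{3.1} that the uncoloured automorphism group of the Bhattacharjee--Macpherson tree is much larger than the colour-preserving one). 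The point has real content: there exist non-isomorphic countable chains each isomorphic to a proper initial segment without maximum of the other --- e.g.\ the chains with descending block patterns $\mathbb{Q},\mathbb{Z},\mathbb{Q},\mathbb{Z},\ldots$ and $\mathbb{Z},\mathbb{Q},\mathbb{Z},\mathbb{Q},\ldots$ --- so a Cantor--Schr\"oder--Bernstein-style argument from the order types of maximal chains of $T_C$ does not work. Your resolution is correct: the tree isomorphism produced by Theorem \ref{6.2} is induced by an isomorphism of permutation groups, hence carries $G_1$-orbits of witnessed triples to $G_2$-orbits; these orbits correspond exactly to colours (Lemma \ref{6.1} together with Lemma \ref{5.1}(i) and Theorem \ref{4.6}(v)); and the induced colour bijection is order-preserving because $c < c'$ holds exactly when some node coloured $c$ lies below some node coloured $c'$, a condition preserved by the tree isomorphism. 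In this respect your write-up closes a gap that the paper leaves open.

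The one step that fails as written is your explicit family of chains. A single marker point between two consecutive $\mathbb{Q}$-blocks is not recognizable, since $\mathbb{Q} + 1 + \mathbb{Q} \cong \mathbb{Q}$; thus maximal runs of $\mathbb{Q}$-blocks together with their internal markers collapse to a single copy of $\mathbb{Q}$, and ``the sequence of block types between successive markers'' is not an isomorphism invariant. Concretely, $X = \{5\}$ and $X = \{6\}$ both produce a chain isomorphic to $\mathbb{Q} + 1 + \mathbb{Z} + 1 + \mathbb{Q}$, so $X \mapsto C_X$ is far from injective on isomorphism types. The fact you need is nevertheless true and easily arranged: take instead $C_X = \sum_{n \in \omega^*} (\mathbb{Q} + F_n)$, where $F_n$ is a finite chain of size $2$ if $n \in X$ and size $3$ otherwise; the finite-condensation classes of size at least $2$ form a descending sequence of order type $\omega^*$, and rigidity of $\omega^*$ forces any isomorphism to match them up from the top, so the sequence of sizes, and hence $X$, is an isomorphism invariant, while each $C_X$ is countable with no least element. (Alternatively, restrict your own family to those $X$ whose consecutive gaps are all $1$ or $2$; the same rigidity argument applied to the $\mathbb{Z}$-blocks then recovers $X$.) With this repair your proof is complete, and indeed more detailed than the paper's at both of the points discussed above.
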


\begin{proof}  From what we have just shown it suffices to remark that there are $2^{\aleph_0}$ pairwise non-isomorphic 
countable linear orders $C$ without least members, which can be used as colour sets for the trees $T_C$. \end{proof}


\noindent{\bf Open questions}

\noindent 1. The results presented show that we can form a tree of $B$-sets giving rise to a Jordan group which preserves a
limit of betweenness relations in the case where the ambient tree is levelled of a very particular form, so this is not a true
generalization of \cite{Bhattacharjee}, where the tree is definitely not levelled. Is it possible to find a more general
class of (countable) trees for which the method works, while relaxing the `levelled' hypothesis?

\vspace{.1in}

\noindent 2. A similar question can be asked where the tree does not have positive type.

\vspace{.1in}

\noindent 3. We would really like to know if whenever a Jordan group (let's say, on a countable set) preserves a limit of 
betweenness relations, is there a tree of $B$-sets which one can somehow recover?

\vspace{.1in}

\noindent 4. In \cite{Almazaydeh} a similar generic construction is given for trees of $D$-sets. Can that also be 
modified for a larger class of ambient trees?

\vspace{.1in}

\noindent 5. Presumably our Jordan group does not preserve a limit of $D$-relations (nor the one in \cite{Bhattacharjee}),
but we haven't checked this. Similarly, the Jordan group of \cite{Almazaydeh} presumably does not preserve a limit of 
$B$-relations. 

\vspace{.1in}

\noindent 6. A significant step in proving Theorem \ref{4.6} involved showing that the union of the sequence of strongly embedded finite trees of $B$-sets $A_1 \leq A_2 \leq ...$ constructed in the proof of Theorem \ref{4.6} is an (infinite) tree of $B$-sets, in the sense defined after Lemma \ref{3.2}. Is it the case that both (i) any countable tree of $B$-sets is the union of a chain of finite trees of $B$-sets; and (ii) the union of any countable chain of finite trees of $B$-sets is a tree of $B$-sets? If so, this would mean that the class of countable trees of $B$-sets is \emph{semi-algebroidal} in the sense described in \cite{Droste2} and \cite{Droste3}.

\vspace{.05in}

\end{document}